\documentclass{article}

\usepackage{arxiv}          % Arxiv environment
\usepackage{texcommands}    % pre-defined tex commands

\usepackage[utf8]{inputenc} % allow utf-8 input
\usepackage[T1]{fontenc}    % use 8-bit T1 fonts
\usepackage{hyperref}       % hyperlinks
\usepackage{url}            % simple URL typesetting
\usepackage{booktabs}       % professional-quality tables
\usepackage{amsfonts}  
\usepackage{mathrsfs}       % mathscr
\usepackage{mathtools}
\usepackage{amsthm}         % theorem environments
\usepackage{graphicx}
\usepackage[dvipsnames]{xcolor}         % textcolor
\usepackage{upgreek} %varphi etc
\usepackage{bbm}
\usepackage{verbatim} %multiline comment
\usepackage{soul} %strikethrough
\usepackage[linesnumbered,ruled,vlined]{algorithm2e}
\usepackage[toc,title]{appendix} % Appendices
\usepackage{derivative} % for easy differential stuff

% Amsthm environments
\newtheorem{definition}{Definition}[section]
\newtheorem{theorem}[definition]{Theorem}
\newtheorem*{theorem*}{Theorem}
\newtheorem{proposition}[definition]{Proposition}
\newtheorem{lemma}[definition]{Lemma}
\newtheorem{corollary}[definition]{Corollary}
\newtheorem{assumption}[definition]{Assumption}
\newtheorem{remark}[definition]{Remark}
\newtheorem{example}[definition]{Example}

% MathOperators
\DeclareMathOperator{\dom}{dom}

\DeclareMathOperator{\LH}{span}
\DeclareMathOperator{\tr}{tr}
\DeclareMathOperator{\im}{im}

\title{On a class of exponential changes of measure for stochastic PDEs}

\author{
  Thorben Pieper-Sethmacher \\
  Delft Institute for Applied Mathematics \\
  Delft University of Technology \\
    Mekelweg 4, 2628 CD Delft \\
    The Netherlands \\
  \texttt{t.pieper@tudelft.nl} \\
   \And
  Frank van der Meulen \\
  Department of Mathematics \\
  Vrije Universiteit Amsterdam \\
    De Boelelaan 1111, 1081 HV Amsterdam\\
    The Netherlands \\
  \texttt{f.h.van.der.meulen@vu.nl} \\
     \And
  Aad van der Vaart \\
  Delft Institute for Applied Mathematics \\
  Delft University of Technology \\
    Mekelweg 4, 2628 CD Delft \\
    The Netherlands \\
  \texttt{a.w.vandervaart@tudelft.nl} \\
}

\begin{document}
\maketitle
\begin{abstract}
Given a mild solution $X$ to a semilinear stochastic partial differential equation (SPDE), we consider an exponential change of measure based on its infinitesimal generator $L$, defined in the topology of bounded pointwise convergence. 
The changed measure $\P^h$ depends on the choice of a function $h$ in the domain of $L$. 
In our main result, we derive conditions on $h$ for which the change of measure is of Girsanov-type. 
The process $X$ under $\P^h$ is then shown to be a mild solution to another SPDE with an extra additive drift-term. 
We illustrate how different choices of $h$ impact the law of $X$ under $\P^h$ in selected applications. These include the derivation of an infinite-dimensional diffusion bridge as well as the introduction of guided processes for SPDEs, generalizing results known for finite-dimensional diffusion processes to the infinite-dimensional case.
\end{abstract}

\keywords{Doob's h-transform; Exponential change of measure; Girsanov theorem; Guided process; Infinite-dimensional diffusion bridge; Kolmogorov operator; Pinned process; Semilinear SPDE; SPDE bridge}

\section{Introduction}
\label{sec: 1_intro}

Consider a semilinear \textit{stochastic partial differential equation (SPDE)} of the form
\begin{align}
\label{eq: semilin_spde}
	\begin{split}
		\begin{cases}
			\df X(t) &= \left[  A X(t) + F(t,X(t)) \right] \df t + \sqrt{Q} \df W(t), \quad t \geq s, \\
			X(s) &= x.
		\end{cases}
	\end{split}
\end{align}
The operator $A$ denotes the generator of a strongly continuous semigroup $(S_t)_{t \geq 0}$ on a Hilbert space $H$, whereas $F$ denotes a non-linear operator and $Q$ is a symmetric, positive operator on $H$. The process $W$ is a cylindrical Wiener process on $H$, defined on a stochastic basis $(\Om, \calF, (\calF_t)_{t \geq 0},\P)$. We assume that the operators $A$, $F$ and $Q$ satisfy suitable conditions such that Equation \eqref{eq: semilin_spde} admits a unique mild solution $X = (X(t,s,x))_{t \geq s}$ for any $s \geq 0$ and $x \in H.$ 
Throughout the article we fix some arbitrary $x_0 \in H$ and simply write $X(t)$ if the SPDE in \eqref{eq: semilin_spde} is assumed to be initialized at $X(0) = x_0$.

For any $m \in \N$, let $C_m(\R_+ \times H)$ be the Banach space of continuous functions $\varphi: \R_+ \times H \rightarrow \R$ such that $\|\varphi\|_m = \sup_{t,x} (1 + \|x\|^m)^{-1} | \varphi(t,x)| < \infty.$
The process $X$ is Markovian and defines a transition semigroup
\begin{align*}
    (T_t \varphi)(s,x) = \E[ \varphi(s+t, X(t+s,s,x))], \quad s,t \geq 0, ~x \in H,
\end{align*}
on $C_m(\R_+ \times H)$. It is well-known that the semigroup $(T_t)_{t \geq 0}$ is not strongly continuous with respect to the norm topology on $C_m(\R_+ \times H)$, see e.g.\  \cite{Cerrai1994Hille} and \cite{DaPrato04Kolmogorov}. However, it does possess the properties of a strongly continuous semigroup in several weaker `modes of convergence'. This has been studied in the framework of \textit{$\calK$-convergence} in \cite{Cerrai1994Hille}, \cite{Cerrai1995Weakly} and \cite{CerraiGozzi1995Strong}, the mixed topology in \cite{GoldysKocan01Diffusion} and of \textit{bp- (bounded pointwise) or $\pi$- convergence} in \cite{Priola1999Markov}. See also \cite{GozziFabbriSwiech2017Stochastic}, Appendix B for a recent survey.  
In the respective convergence of choice, one can then define an infinitesimal generator $(L, \dom_m(L))$ of the semigroup $(T_t)_{t \geq 0}$ in the usual way.  
In this article, we will work within the framework of $\pi$-convergence as introduced in \cite{Priola1999Markov}. 

Crucially, the operator $(L, \dom_m(L))$ exhibits the common properties that are characteristic for infinitesimal generators of strongly continuous semigroups. Of particular importance for us is the fact that \textit{Dynkin's formula} holds, i.e. for any $h \in (L, \dom_m(L))$ the process
\begin{align*}
    D^h(t) = h(t,X(t)) - \int_0^t L h(s,X(s)) \, \df s
\end{align*}
is a $\P$-martingale. In other words, the process $X$ solves the \textit{martingale problem} of $(L, \dom_m(L))$ as introduced in \cite{StroockVaradhan97Multidimensional}.
Furthermore, one can show that for any positive $h \in \dom_m(L)$, the process
\begin{align*}
    E^h(t) = \dfrac{h(t,X(t))}{h(0,x_0)} \exp\left( - \int_0^t \dfrac{L h}{h}(s,X(s)) \, \df s \right), \quad t \geq 0,
\end{align*}
whenever existent, is a positive, continuous local $\P$-martingale with $\E[ E^h(0)] = 1$. If $E^h$ is a true $\P$-martingale, it defines an \textit{exponential change of measure} $\P^h$ on $\calF$ such that for any $t \geq 0$
\begin{align}
\label{eq: intro_changeofmeasure}
    \df \P^h_{\mid \calF_t} = E^h(t) \df \P_{\mid \calF_t}.
\end{align}
The change of measure $\P^h$ is well-known in the literature for Markov processes, see \cite{PalmowskiRoski2002technique} and references within. If the function $h$ is harmonic, i.e. $L h = 0$, it is known as \textit{Doob's h-transform}, following its introduction in \cite{Doob1984Classical}. In \cite{PalmowskiRoski2002technique} it was shown that the $X$ remains Markovian under $\P^h$ and solves the martingale problem corresponding to a perturbation of $L$.

In this article we aim to establish conditions on the $h$-function under which $X$ is not only Markovian under the changed measure, but again the mild solution of another SPDE, differing from Equation \eqref{eq: semilin_spde} by an additional drift-term dependent on $h$.
This can be viewed as a special case in which $\P^h$ is a \textit{Girsanov}-type change of measure.
In this spirit, we show the following as the main result of this paper. 

\begin{theorem}[Informal]
\label{thm: intro}
Under suitable assumptions on $h \in \dom_m(L)$, there exists a unique measure $\P^h$ on $(\Om, \calF, (\calF_t)_{t \geq 0})$ that satisfies \eqref{eq: intro_changeofmeasure}. 
Furthermore, the process
\begin{align*}
W^h(t) = W(t) - \int_0^t \sqrt{Q} \Df_x \log h(s,X(s)) \, \df s, \quad t \in [0,T],
\end{align*}
is a cylindrical Wiener process with respect to $\P^h$.
In particular, $X$ under $\P^h$ solves the SPDE
\begin{align*}
    \df X(t) = \left[ A X(t) + F(t,X(t)) + Q \Df_x \log h(t,X(t)) \right] \df t + \sqrt{Q} \df W^h(t), \quad t \in [0,T].
\end{align*}
\end{theorem}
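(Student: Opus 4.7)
The plan is to identify the density process $E^h$ as the stochastic exponential of an explicit continuous local martingale and then invoke an infinite-dimensional Girsanov theorem. The key structural input is that on sufficiently smooth functions the Kolmogorov operator associated with \eqref{eq: semilin_spde} acts as
\begin{align*}
    L\varphi(t,x) = \partial_t \varphi(t,x) + \langle A x + F(t,x), \Df_x \varphi(t,x) \rangle + \tfrac{1}{2} \tr\bigl(Q\, \Df_{xx} \varphi(t,x)\bigr),
\end{align*}
so that a (mild) Itô formula for solutions of \eqref{eq: semilin_spde} yields $d\varphi(t,X(t)) = L\varphi(t,X(t))\,dt + \langle \Df_x \varphi(t,X(t)), \sqrt{Q}\,dW(t)\rangle$ for $\varphi \in \dom_m(L)$ with enough regularity.

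The core computation I would carry out is to apply this expansion to $\log h(t,X(t))$. Via the chain rule one obtains $\Df_x \log h = \Df_x h / h$, and the second-derivative term produces the familiar quadratic correction, giving
\begin{align*}
    L \log h = \frac{Lh}{h} - \tfrac{1}{2} \| \sqrt{Q}\, \Df_x \log h \|^2.
\end{align*}
Setting $M(t) = \int_0^t \langle \Df_x \log h(s,X(s)), \sqrt{Q}\,dW(s) \rangle$, so that the quadratic variation satisfies $\langle M \rangle_t = \int_0^t \|\sqrt{Q}\, \Df_x \log h(s,X(s))\|^2\,ds$, the defining expression for $E^h$ then collapses to
\begin{align*}
    \log E^h(t) = \log h(t,X(t)) - \log h(0,x_0) - \int_0^t \frac{Lh}{h}(s,X(s))\,ds = M(t) - \tfrac{1}{2}\langle M \rangle_t.
\end{align*}
In other words, $E^h$ is the Doléans-Dade exponential of the continuous local martingale $M$.

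Once this identification is in place, the \emph{suitable assumptions} in the informal statement are exactly those that upgrade $E^h$ to a true $\P$-martingale, e.g.\ a Novikov-type integrability bound on $\|\sqrt{Q}\,\Df_x \log h(\cdot,X(\cdot))\|$. Under such a hypothesis, existence and uniqueness of $\P^h$ on $(\Om, \calF, (\calF_t)_{t\geq 0})$ satisfying \eqref{eq: intro_changeofmeasure} follow from a standard Kolmogorov-type consistency argument. The infinite-dimensional Girsanov theorem for cylindrical Wiener processes on a Hilbert space (as in Da Prato--Zabczyk) then shows that $W^h(t) = W(t) - \int_0^t \sqrt{Q}\,\Df_x \log h(s,X(s))\,ds$ is a cylindrical $\P^h$-Wiener process, and substituting $\sqrt{Q}\,dW(t) = \sqrt{Q}\,dW^h(t) + Q\,\Df_x \log h(t,X(t))\,dt$ into \eqref{eq: semilin_spde} yields the claimed SPDE.

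The main obstacle, and the reason that precise hypotheses on $h$ are unavoidable, is twofold. First, in infinite dimensions a classical Itô formula does not apply directly to mild solutions, so the pointwise identity for $L\log h$ must be converted into the rigorous decomposition above either by a mild Itô formula or by Yosida regularizations, which in turn requires $\Df_x h$ and $\Df_{xx} h$ to interact appropriately with the spaces on which $A$, $F$, and $\sqrt{Q}$ live. Second, passing from the local martingale $\calE(M)$ to a true martingale is nontrivial when $\Df_x \log h$ is unbounded, and typically forces either a Novikov/Kazamaki criterion or an explicit exponential moment bound. I expect the formal proof to consist largely of making these two ingredients precise under the specific regularity and integrability hypotheses to be imposed on $h$.
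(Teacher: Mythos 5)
Your endgame (identify $E^h$ as a Dol\'eans-Dade exponential, then apply the infinite-dimensional Girsanov theorem) coincides with the paper's, but the route you take to the identity $E^h = \exp(M - \tfrac12\langle M\rangle)$ has a genuine gap. You obtain it by applying the generator expansion to $\log h(t,X(t))$ and invoking the second-order chain rule $L\log h = h^{-1}Lh - \tfrac12\|\sqrt{Q}\Df_x\log h\|^2$. Under the hypotheses the paper actually imposes (Assumption \ref{ass: hgoodfunction}: $h \in \dom_m(L)$ with $h^{-1}Lh \in C_m$, and $h$ once Fr\'echet differentiable with $\Df_x h \in C_m$), this step is not licensed: $h$ is not assumed twice differentiable, so $\Df_x^2 \log h$ need not exist; $\log h$ need not lie in $C_m(\R_+\times H)$ at all (it can diverge to $-\infty$ where $h$ is small), so $\log h \notin \dom_m(L)$ a priori; and $L$ is defined abstractly via the $\pi$-semigroup, acting as a differential operator only on the core $\calERH$ and by $\pi$-approximation, so even a formal chain rule for $L$ applied to compositions like $\log h$ is not available. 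Carrying your computation through literally would force strictly stronger smoothness and integrability hypotheses on $h$ than the theorem requires.

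The paper avoids this entirely by never touching $\log h$ with the generator. It first proves (Lemma \ref{lem: hDynkinMartingaleItoProcess}) that the Dynkin martingale satisfies $D^h(t) = h(0,x_0) + \int_0^t \langle \sqrt{Q}\Df_x h(s,X(s)), \df W(s)\rangle$ — for exponential test functions via Yosida approximation of $A$ (making $X_n$ a strong solution so It\^o's formula applies), and then for general $h$ via the $\pi$-core property of $\calERH$ (Lemma \ref{lem: pi_core}), which supplies simultaneous $\pi$-approximation of $h$, $\Df_x h$ and $Lh$. It then applies the semimartingale integration-by-parts formula to the product $h(t,X(t))\cdot\exp\bigl(-\int_0^t h^{-1}Lh\,\df s\bigr)$, where the second factor is continuous and of finite variation, to get $\df E^h = E^h\,\df M^h$ directly. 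This uses only the first derivative of $h$ and the membership $h \in \dom_m(L)$, which is exactly why the stated assumptions suffice. Your two flagged obstacles (mild It\^o formula / Yosida regularization, and the martingale property of $\calE(M)$) are indeed the right ones, and your treatment of the second matches Lemma \ref{lem: conditions_E^h_martingale}; the missing idea is to run the regularization argument at the level of $h$ and its Dynkin martingale, not at the level of $\log h$.
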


\subsection{Approach and challenges}
Let us demonstrate our approach on how to derive Theorem \ref{thm: intro} in the case that $H = \R^d$. Equation \eqref{eq: semilin_spde} then describes a \textit{stochastic ordinary differential equation (SODE)} 
\begin{align}
\label{eq: sode}
    \df x(t) = b(t,x(t)) \df t + \sqrt{q} \df w(t),
\end{align}
where $b$ is some Lipschitz continuous function, $q$ is a symmetric positive definite matrix and $w$ is an $\R^d$-valued Wiener process.
Let $h \in C^{1,2}_b(\R_+ \times \R^d)$ be differentiable with bounded derivatives.
An application of \textit{Itô's formula} then gives that $h(t,x(t))$ is the semimartingale given by
\begin{align*}
    h(t,x(t)) = h(0,x_0) + \int_0^t L_0 h(s,x(s)) \, \df s + \int_0^t \langle \sqrt{q} \Df_x h(s,x(s)),  \df w(s) \rangle,
\end{align*}
where $L_0$ is the \textit{Kolmogorov operator} associated with equation \eqref{eq: sode}, defined via
\begin{align*}
    (L_0 h)(t,x) = \partial_t h(t,x) + \langle b(t,x), \Df_x h(t,x) \rangle + \frac{1}{2} \tr[q\Df_x^2 h(t,x)].
\end{align*}
From this, one can conclude that $h \in \dom_m(L)$ with $L h = L_0 h$ and Dynkin martingale $D^h$ given by
\begin{align*}
    D^h(t) = h(0,x_0) + \int_0^t \langle \sqrt{q} \Df_x h(s,x(s)), \df w(s) \rangle.
\end{align*}
An application of the integration by parts formula for semimartingales then shows that the local martingale $E^h$ equals the \textit{stochastic exponential} 
\begin{align*}
    E^h(t) = \exp \left(M^h(t) - \frac{1}{2} \left[ M^h \right]_t \right)
\end{align*}
of the Itô process $M^h(t) = \int_0^t \langle \sqrt{q} \Df_x \log h(s,x(s)), \df w(s) \rangle$.
Therefore, if $E^h$ is a true martingale, the \textit{Girsanov theorem} implies that the process $w^h(t) = w(t) - \int_0^t \sqrt{q} \Df_x \log h(s,x(s)) \, \df s$ is a Wiener process under $\P^h$ and that $x$ solves the SODE
\begin{align*}
        \df x(t) = b(t,x(t)) \df t + q\Df_x \log h(t,x(t)) \df t + \sqrt{q} \df w^h(t).
\end{align*}
We face two key challenges when generalizing this approach to the setting of an infinite-dimensional Hilbert space $H$.
Firstly, since $A$ is an unbounded operator on $H$, we generally cannot expect the SPDE \eqref{eq: semilin_spde} to admit a strong solution. 
This renders any direct application of Itô's formula infeasible and even for smooth functions $h$, the process $h(t,X(t))$ is in general not a semimartingale. 

We circumvent this issue by approximating $X$ by a sequence of strong solutions $X_n$ that satisfy Equation \eqref{eq: semilin_spde} when substituting $A$ by its \textit{Yosida approximations} $(A_n)_n.$ Under suitable assumptions on $h$, one can then approximate the process $h(t,X(t))$ by the sequence of semimartingales $h(t,X_n(t))$ for which Itô's formula is applicable.

Secondly, consider the Kolmogorov operator 
\begin{align*}
    (L_0 \varphi)(t,x) = \partial_t \varphi(t,x) + \langle A x + F(t,x), \Df_x \varphi(t,x)\rangle + \frac{1}{2} \tr[Q \Df_x^2 \varphi(t,x)]
\end{align*}
associated with the SPDE \eqref{eq: semilin_spde}.
In order for $L_0 \varphi$ to be a well-defined and continuous function on $\R_+ \times H$, besides the usual smoothness properties of $\varphi$, one requires that there exists a continuous extension of the mapping
\begin{align*}
    \R_+ \times \dom(A) \rightarrow \R,~ (t,x) \mapsto \langle A x, \Df_x \varphi(t,x) \rangle.
\end{align*}
This severely limits the class of functions for which $L_0 \varphi$ is a well-defined differential operator and substantial work has been done to construct suitable spaces of test functions for Kolmogorov operators in infinite dimensions, see \cite{DaPrato04Kolmogorov} and references within.

In particular, for most of our applications, the $h$-functions of interest cannot be expected to be in the domain of $L_0$. However, in \cite{manca2008kolmogorov} and \cite{manca2009fokker} it is shown that the space of \textit{exponential test functions} $\calERH$, defined as the real and imaginary parts of the functions 
\begin{align*}
       \R_+ \times H \rightarrow \R, ~(s,x) \mapsto \exp(i (\langle x, a \rangle + cs)), \quad a \in \dom(A^*), c \in \R,
\end{align*}
acts as a core for the infinitesimal generator $(L, \dom_m(L))$ with respect to $\pi$-convergence. Therefore, under the weaker assumption that $h \in \dom_m(L)$, we may approximate $h$ with a sequence of suitable test functions $(h_n)_n \subset \calERH$ for which $L_0 h_n$ remains well-defined.

\subsection{Related work and Applications}
In applications of the exponential change of measure $E^h$, one chooses a suitable $h$-function such that $X$ under $\P^h$ exhibits certain desired properties. 
A well-known application from the finite-dimensional setting is the derivation of \textit{diffusion bridges} that describe the process $x$ conditioned on hitting an endpoint $x(T) = y \in \R^d$. An application of our results lifts this to the infinite-dimensional case, thereby allowing us to derive an equation for the \textit{infinite-dimensional diffusion bridge} (or \textit{SPDE bridge}).

To the best of our knowledge, the existing literature on infinite-dimensional bridges is limited to the case where $F\equiv 0$. This ensures mild solutions are Gaussian processes, which leads to an explicit expression for the SPDE bridge, also called an \textit{Ornstein-Uhlenbeck (OU)} bridge in this case.  
\cite{Simao91} shows that an infinite system of one-dimensional OU bridges defines an OU bridge on a Hilbert space. In a more general, non-diagonal setting, \cite{GoldysMaslowski08} derive an equation for the OU bridge and apply it to study basic properties of transition semigroups for semilinear SPDEs. More recently, \cite{DinunnoPetersson23} consider a linear stochastic reaction–diffusion equation on a bounded domain where the process is conditioned on a noisy observation at time $T$.  A general framework for the spatial discretization of these bridge processes is developed. 

Our approach via the change of measure $E^h$ is more general. In the specific case that $h(t,x)=p(t,x; T,y)$, with $p$ the transition density of the process with respect to an appropriately chosen reference measure, it gives rise to the infinite-dimensional diffusion bridge that conditions the process to hit $y$ at time $T$. Our results allow for other choices of $h$, for example $h(t,x)=\tilde{p}(t,x; T,y)$, where $\tilde{p}$ is the transition density of the SPDE without the nonlinearity. The resulting process is called a \textit{guided process}, analogous to the finite-dimensional setting introduced in \cite{vdMeulenSchauer2017Guided}. The guided process is different from the conditioned process, but it mimics properties of that process, though the extra term in the drift ignores the nonlinearity. Contrary to $p$, the transition density $\tilde{p}$ is explicitly known. Therefore, the SPDE for the guided process can be numerically approximated. Taking into account the likelihood ratio of distribution of the true conditioned process with respect to the guided process, {\it weighted samples} of the conditioned process are obtained. It is exactly this approach which has been proposed in earlier works in the simpler setting of stochastic ordinary differential equations (see, for instance, \cite{vdMeulenSchauer2017Guided}, \cite{delyon2006simulation}, \cite{vdMeulenBierkensSchauer2020Simulation} and applications in \cite{mider2021continuous}). The results in this paper prove the existence of the guided process as the mild solution to a particular SPDE. 

Another application that we consider is that of \textit{forcing} the process so that its marginal distribution at time $T$ is fixed to a specified distribution. This extends the results for the finite-dimensional case considered in \cite{Baudoin2002Conditioned}.

\subsection{Outline}
We provide an overview of the needed preliminaries in Section \ref{sec: 2_preliminaries}. Particular attention will be given to semigroups that are strongly continuous with respect to $\pi$-convergence as well as their infinitesimal generators. 
In Section \ref{sec: 3_main_result}, we present, in a detailed manner, the main result of this paper and its proof. Additionally, a modified version is given for the special case that the change of measure is limited to a finite time horizon. 
We showcase some applications of the main result in Section \ref{sec: 4_applications}. These include the derivations of the infinite-dimensional diffusion bridge and the forced process as well as the guided process. 

\section{Preliminaries}
\label{sec: 2_preliminaries}

\subsection{On the stochastic basis} 
We assume to be working on the stochastic basis $(\Omega, \calF, (\calF_t)_{t \geq 0}, \P)$ defined as follows. 
Let $H$ be some Hilbert space, embedded in another Hilbert space $H'$ such that the embedding $J: H \hookrightarrow H'$ is a Hilbert-Schmidt operator. Note that in particular, $JJ^*$ is a positive definite trace-class operator on $H'$. Define $\Om = C(\R_+;H')$ as the space of all continuous functions from $\R_+$ to $H'$ equipped with the metric
\begin{align*}
    d(\om,\tilde{\om}) = \sum_{n=1}^{\infty} \frac{1}{2^n} \frac{\|\om - \tilde{\om}\|_n}{1+\|\om - \tilde{\om}\|_n}, 
\end{align*}
where $\|\om - \tilde{\om}\|_n = \sup_{t \in [0,n]}|\om(t) - \tilde{\om}(t)|.$
Then $(\Om, d)$ is a Polish space and we denote by $\calF$ the Borel $\sigma$-algebra of $(\Om, d)$.

On $(\Om, \calF)$ define the canonical process $\eta: \R_+ \times \Om \rightarrow H'$, $\eta_t( \om) = \om(t)$ and let $(\calF_t)_{t \geq 0}$ be the right-continuous extension of the natural filtration of $\eta_t$, i.e.  $\calF_t = \bigcap_{\eps > 0} \sigma( \eta_s : s \leq t+\epsilon)$ for any $t \geq 0.$ By the Kolmogorov extension theorem, there exists a Gaussian measure $\P$ on $(\Om, \calF, (\calF_t)_{t \geq 0})$ such that, under $\P$, the canonical process $\eta$ is a Wiener process on $H'$ with covariance operator $JJ^*$. In particular, $\eta$ is a cylindrical Wiener process on $H$. 

\vspace{2mm}

Denote by $\P_t$ the restriction of $\P$ onto $\calF_t$ for any $t \geq 0$. We will need the following result (see \cite{Stroock1987Lectures}, Lemma 4.2.).
\begin{lemma}
\label{lem: ExpChangeMeasure}
Let $(E(t))_{t \geq 0}$ be a non-negative martingale on $(\Omega, \calF, (\calF_t)_{t \geq 0}, \P)$ with $\E[E(0)] = 1$.
Then there exists a unique measure $\Q$ on $\calF$ such that $\df \Q_t = E(t) \df \P_t$ for all $t \geq 0$.
\end{lemma}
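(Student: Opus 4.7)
The plan is to construct $\Q$ as a Kolmogorov-type extension of a consistent family of finite-horizon probability measures $\Q_t$ on $(\Om, \calF_t)$ with density $E(t)$ with respect to $\P_t$. First, for each $t \geq 0$ I would define
$$
\Q_t(A) := \E[E(t) \mathbbm{1}_A], \qquad A \in \calF_t.
$$
Since $E(t) \geq 0$ and $\E[E(t)] = \E[E(0)] = 1$ by the martingale property, $\Q_t$ is a probability measure on $(\Om, \calF_t)$, and by construction $\df \Q_t = E(t) \, \df \P_t$.

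Next I would verify the consistency relation $\Q_t|_{\calF_s} = \Q_s$ for $0 \leq s \leq t$: for any $A \in \calF_s$, the tower property together with the martingale property of $E$ gives
$$
\Q_t(A) = \E[\mathbbm{1}_A E(t)] = \E[\mathbbm{1}_A \E[E(t) \mid \calF_s]] = \E[\mathbbm{1}_A E(s)] = \Q_s(A).
$$
The family $(\Q_t)_{t \geq 0}$ therefore defines a finitely additive set function $\Q$ on the algebra $\calA := \bigcup_{t \geq 0} \calF_t$ by setting $\Q(A) := \Q_t(A)$ whenever $A \in \calF_t$; consistency guarantees that this definition does not depend on the choice of $t$.

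The main obstacle is upgrading $\Q$ to a genuine $\sigma$-additive probability measure on $\calF = \sigma(\calA)$; there is no \emph{a priori} reason for countable additivity on $\calA$ because the martingale $E$ need not be uniformly integrable and mass could in principle escape as $t \to \infty$. The standard resolution exploits that $\Om = C(\R_+; H')$ is a Polish space: the restrictions $\Q_t$ are Borel probability measures on a Polish path space and are therefore inner regular on each $\calF_t$. Combined with consistency, this allows one to verify countable additivity of $\Q$ on the algebra $\calA$, and Carathéodory's extension theorem (equivalently, the Daniell--Kolmogorov theorem applied to the projective family $(\Q_t)_t$) then produces a probability measure on $\calF = \sigma(\calA)$ extending $\Q$. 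This is precisely the content of Stroock's Lemma 4.2, which I would invoke once the previous steps are in place.

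Uniqueness is then immediate: any two measures on $\calF$ satisfying $\df \Q_t = E(t) \, \df \P_t$ for every $t \geq 0$ coincide on the $\pi$-system $\calA$, and Dynkin's $\pi$--$\lambda$ theorem forces agreement on $\sigma(\calA) = \calF$.
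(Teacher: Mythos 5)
Your argument is correct and follows essentially the same route as the paper, which simply states the result with a citation to Lemma 4.2 of Stroock's \emph{Lectures on Stochastic Analysis}: you construct the consistent family $(\Q_t)_t$, correctly identify that the only nontrivial point is countable additivity on the algebra $\bigcup_t \calF_t$ (since $E$ need not be uniformly integrable), and defer that extension step to the same reference. The added detail on consistency and uniqueness via the $\pi$--$\lambda$ theorem is sound and consistent with what the cited lemma provides.
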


\begin{remark}
Note that Lemma \ref{lem: ExpChangeMeasure} does not give absolute continuity of $\Q$ with respect to $\P$. 
However, under the stronger assumption that $E$ is a uniformly integrable martingale, it can be shown that $\Q$ is absolutely continuous with respect to $\P$ on $\calF$ with $\df \Q = E_{\infty} \df \P$, where $E_{\infty}$ is the unique random variable such that $E(t) = \E \left[ E_{\infty} \mid \calF_t \right]$ for all $t \geq 0$.
\end{remark}

In many applications the martingale $E$ is only defined on some half-open interval $t \in [0,T)$. In that circumstance the following variation of Lemma \ref{lem: ExpChangeMeasure} will be useful. See Appendix \ref{app: A} for the proof.
\begin{lemma}
\label{lem: ExpChangeMeasureFinite}
Let $(E(t))_{t \in [0,T)}$ be a non-negative martingale on $(\Omega, \calF, (\calF_t)_{t \in [0,T)}, \P)$ with $\E[E(0)] = 1$. 
Then there exists a unique measure $\Q$ on $\calF_T$ such that $\df \Q_t = E(t) \df \P_t$ for all $0 \leq t < T.$
\end{lemma}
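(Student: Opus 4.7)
The plan is to reduce to Lemma \ref{lem: ExpChangeMeasure} via a deterministic time change. I would fix any strictly increasing continuous bijection $\phi:[0,\infty) \to [0,T)$, for example $\phi(s) = Ts/(T+s)$, and set $\hat{\calF}_s := \calF_{\phi(s)}$ and $\hat E(s) := E(\phi(s))$. The martingale property of $E$ and the monotonicity of $\phi$ immediately imply that $(\hat E(s))_{s \geq 0}$ is a non-negative martingale on $(\Omega,\calF,(\hat{\calF}_s)_{s \geq 0},\P)$ with $\E[\hat E(0)] = 1$.

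Applying Lemma \ref{lem: ExpChangeMeasure} to $\hat E$ would then produce a unique measure $\hat\Q$ on $\calF$ with $d\hat\Q_s = \hat E(s)\, d\P_s$ on $\hat{\calF}_s$ for every $s \geq 0$. Undoing the time change, I would define $\Q$ as the restriction of $\hat\Q$ to $\calF_T \subseteq \calF$; this automatically gives $d\Q_t = E(t)\, d\P_t$ on $\calF_t$ for every $t \in [0,T)$. Uniqueness of $\Q$ among measures on $\calF_T$ with these marginals would follow from the $\pi$-$\lambda$ theorem applied to the $\pi$-system $\bigcup_{t < T} \calF_t$, provided this $\pi$-system generates $\calF_T$.

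The main obstacle is precisely this generating property: $\calF_T = \bigcap_{\eps > 0} \sigma(\eta_s : s \leq T + \eps)$ could in principle contain germ-type events at $T$ beyond $\sigma\!\bigl(\bigcup_{t<T} \calF_t\bigr)$. I would handle this as follows: continuity of paths in $\Omega = C(\R_+;H')$ gives $\eta_T = \lim_{t \uparrow T}\eta_t \in \sigma(\eta_s : s < T)$, so no endpoint information is missed by the smaller $\sigma$-algebra; a Blumenthal-type $0$-$1$ argument at the deterministic time $T$ then shows that the extra sets added by the right-continuous augmentation are $\P$-null, and absolute continuity of $\hat\Q$ with respect to $\P$ on each $\calF_t$ propagates this to $\Q$-null sets. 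Together these steps yield both existence and uniqueness of $\Q$ on $\calF_T$ up to the standard completion.
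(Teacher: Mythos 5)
Your reduction via a deterministic time change is attractive, but it has two genuine gaps. First, Lemma \ref{lem: ExpChangeMeasure} is not a statement about arbitrary filtered probability spaces: it is quoted for the specific canonical basis, its conclusion is a measure on $\calF = \sigma\bigl(\bigcup_{t\ge 0}\calF_t\bigr)$, and its proof (Stroock's Lemma 4.2) uses the concrete structure of the $\calF_t$ as (right-continuous extensions of) $\sigma$-algebras generated by restriction maps on the Polish path space. After your time change, $\sigma\bigl(\bigcup_{s\ge 0}\hat\calF_s\bigr) = \sigma\bigl(\bigcup_{t<T}\calF_t\bigr) \subsetneq \calF$, so the lemma cannot be applied verbatim: there is certainly no \emph{unique} measure on all of $\calF$ determined by the $\hat\calF_s$-marginals, and to justify existence for the time-changed filtration you would have to rerun the extension argument. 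That extension is precisely the content of the paper's proof, which establishes consistency of the pushforward measures $\Q_n(A)=\E\bigl[E_{t_n}\mathbbm{1}_{\eta_n^{-1}(A)}\bigr]$ along $t_n=T-1/n$ (using the stopped canonical processes $\eta_n$) and then invokes Carath\'eodory on the algebra $\bigcup_n\calF_{t_n}$. So the time change relocates the work rather than doing it.

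Second, and more seriously, your proposed repair of the discrepancy between $\calF_T$ and $\sigma\bigl(\bigcup_{t<T}\calF_t\bigr)$ does not work. A Blumenthal-type argument would show that every $A\in\calF_T$ agrees $\P$-a.s.\ with some $A'\in\sigma\bigl(\bigcup_{t<T}\calF_t\bigr)$, but to set $\Q(A):=\Q(A')$ you need $\Q$ to annihilate the relevant $\P$-null sets, and absolute continuity of $\Q_t\ll\P_t$ on the $\calF_t$ with $t<T$ gives no control over them: these null sets need not lie in any $\calF_t$ with $t<T$, and in the paper's key applications $\Q$ genuinely charges $\P$-null sets of $\sigma\bigl(\bigcup_{t<T}\calF_t\bigr)$ (the bridge measure concentrates on $\{X(T)=y\}$). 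Hence the putative extension is not well defined. Passing to ``the standard completion'' also silently changes the statement, since the paper's filtration is not completed. The paper sidesteps all of this by asserting left-continuity of the filtration, so that $\calF_T=\sigma\bigl(\bigcup_{t<T}\calF_t\bigr)$ and uniqueness follows immediately; if you wish to keep your route you must either prove that identity for this particular filtration or restrict the conclusion to $\sigma\bigl(\bigcup_{t<T}\calF_t\bigr)$.
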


\subsection{On stochastic evolution equations}
The following is a standing assumption on the components involved in Equation \eqref{eq: semilin_spde}.
\begin{assumption} ~
\label{ass: basic_assumptions_SPDE}
    \begin{itemize}
        \item[(i)] $A$ is the generator of a strongly continuous semigroup $(S_t)_{t\geq 0}$ on $H$. In particular there exists a $C_S > 0, \om_S \in \R$ such that $\|S_t\|_{L(H)} \leq C_S \exp(\om_S t)$ for all $t \geq 0.$
        \item[(ii)] $W$ is a cylindrical Wiener process on $H$. 
        \item[(iii)] $Q$ is a symmetric, positive operator on $H$. Furthermore, the family of operators $(Q_t)_{t \geq 0}$ defined by 
        \begin{align}
        \label{eq: def_Q_t}
            Q_t = \int_0^t S_s Q S_s^* \,  \df s 
        \end{align}
        is such that $\sup_t \tr(Q_t) < \infty.$
        \item[(iv)] $F$ is such that there exists a constant $C_{F} > 0$ with 
        \begin{align*}
                    \| F(t,x) - F(t,y) \| \leq C_{F} \| x- y \| \text{ and }         \| F(t,x) \| \leq C_{F} (1 + \|x\|) \quad 
        \end{align*}
        for all $t \geq 0$ and $ x,y \in H.$
    \end{itemize}
\end{assumption}

Under Assumption \ref{ass: basic_assumptions_SPDE}, Equation \eqref{eq: semilin_spde} admits a unique mild solution $X = (X(t,s,x))_{t \geq s }$ for any initial value $x \in H$, i.e. $X$ is an $H$-valued, $\calF_t$-adapted process that satisfies  
\begin{align}
\label{eq: mild_sol}
    X(t,s,x) = S_{t-s} x + \int_s^t S_{t-u}F(u,X(u,s,x)) \,  \df u + \int_s^t S_{t-u} \sqrt{Q} \df W(u), \quad t \geq s.
\end{align}
The process $X$ is Markovian and has a $\P$-almost surely continuous modification.
Furthermore, for any $m \geq 1$, there exist some constants $C_{m} > 0, \gamma_m \geq 0$, also depending on $\om_S, C_S, C_F$ and $Q$, such that
\begin{align}
\label{eq: expectation_bound_mild_solution}
    \E[ \|X(t,s,x)\|^m ] \leq C_m \exp(\gamma_m (t-s)) (1 + \|x\|^m).
\end{align}
Though bounds similar to \eqref{eq: expectation_bound_mild_solution} are well known results in the literature, this particular bound follows from \cite{GoldysKocan01Diffusion}, Proposition 3.1.
To abbreviate notation, we fix some arbitrary $x_0 \in H$ and simply write $X(t)$ whenever \eqref{eq: semilin_spde} is assumed to be initialized at $X(0) = x_0$.

Due to the unbounded nature of $A$, one generally cannot assume Equation \eqref{eq: semilin_spde} to admit a strong solution. However, one can approximate its mild solution with strong solutions to a sequence of substitute equations in the following way. 
Let $(A_n)_{n > \omega_S}$ be the Yosida approximation of $A$, i.e. $(A_n)_{n > \omega_S}$ is the sequence of bounded, linear operators on $H$ defined via 
\begin{align*}
    A_n = n A R(n,A),
\end{align*}
where $R(n,A) = (n-A)^{-1}$ is the resolvent of $A$. It then holds that $\lim_n A_n x = A x$ for any $x \in \dom(A)$ and that $A_n$ defines a semigroup $S^{(n)}_t$ such that $\lim_n S^{(n)}_t x = S_t x$ for all $x \in H$ and 
\begin{align}
\label{eq: yosida_approx_Sn_bound}
    \|S^{(n)}_t\|_{L(H)} \leq C_S \exp \left(  \omega_n t \right)
\end{align}
with $\omega_n = \dfrac{\omega_S n}{n - \omega_S}$.
Now let $X_n$ be the strong solution to the equation
\begin{align}
\label{eq: def_Xn_yosida_approx}
\df X_n(t) = \left[A_n X_n(t) + F(t,X_n(t)) \right] \df t + \sqrt{Q} \df W(t), \quad X_n(s) = x.
\end{align}
It is well-known (see e.g. \cite{DaPrato2014Stochastic}, Proposition 7.4) that
\begin{align}
\label{eq: yosida_Xn_approx}
    \lim_{n \to \infty} \E \left[ \sup_{t \in [s,T]} |X_n(t) - X(t)|^p \right] = 0 
\end{align}
for any $T > s, p > 1$ and in particular $X_n \to X$ in probability as $C([s,T];H)$-valued random variables.

In the special case that $F = 0$ we denote the time homogeneous mild solution to Equation \eqref{eq: semilin_spde} by $Z$ and refer to it as the \textit{Ornstein-Uhlenbeck (OU) process}. The random variables $Z(t,s,x)$ are Gaussian with mean $S_{t-s} x$ and covariance operator $Q_{t-s}$. 
Under Assumption \ref{ass: basic_assumptions_SPDE} (iii) $Z(t,s,x)$ converges in distribution to its invariant distribution $\nu \sim \calN(0,Q_{\infty})$ with 
\begin{align}
\label{eq: q_infty}
    Q_{\infty} = \int_0^{\infty} S_u Q S_u^* \, \df u.
\end{align}

\subsection{On transition semigroups and their generators on spaces of polynomial growth}

Let $(E, |\cdot|_E)$ be a Banach space. 
For any $m \geq 0$, we let $C_m(\R_+ \times H;E)$ be the space of all continuous functions $\varphi: \R_+ \times H \rightarrow E$ that are bounded in $t$ and of at most polynomial growth of order $m$ in $x$.
The space $C_m(\R_+ \times H;E)$ is a Banach space with norm
\begin{align*}
    \| \varphi\|_m = \sup_{(t,x) \in \R_+ \times H} \frac{|\varphi(t,x)|_E}{1 + \|x\|^m}.
\end{align*}
If $E = \R^d$, equipped with the Euclidean norm, we simply write $C_m(\R_+ \times H)$. 

It follows from the inequality \eqref{eq: expectation_bound_mild_solution} that the time homogeneous space-time process $Y(t,(s,x)) = (t+s, X(t+s,s,x))$ defines a transition semigroup $(T_t)_{t \geq 0}$ on $C_m(\R_+ \times H)$ via 
\begin{align}
\label{eq: T_t}
    (T_t \varphi)(s,x) = \E[ \varphi(s+t, X(t+s,s,x))], \quad \varphi \in C_m(\R_+ \times H), t \geq 0, x \in H.
\end{align}
As noted in the introduction, $(T_t)_{t \geq 0}$ is not strongly continuous with respect to the norm topology on $C_m(\R_+ \times H)$. Instead, one has to turn to the weaker mode of \textit{$\pi$-convergence}.

We say a sequence $(\varphi_n)_n \subset C_m(\R_+ \times H)$ is \textit{$\pi$-convergent} to $\varphi \in C_m(\R_+ \times H)$ and write $\pilim_n \varphi_n = \varphi$ if
\begin{align*}
\sup_n \| \varphi_n \|_{m} < \infty \text{ and } \lim_n \varphi_n(t,x) = \varphi(t,x) \text{ for all } (t,x) \in \R_+ \times H.
\end{align*}

The \textit{$\pi$-closure} of a subset $B$ in $C_m(\R_+ \times H)$ is defined as 
\begin{align*}
        \overline{B}^{\pi} = \{  \varphi \in C_m(\R_+ \times H): ~ \exists (\varphi_n)_n \subset B \text{ s.t. } \pilim_n \varphi_n = \varphi\}.
\end{align*}
The set $B$ is said to be \textit{$\pi$-closed} if $\overline{B}^{\pi} = B$ and \textit{$\pi$-dense} if $\overline{B}^{\pi} = C_m(\R_+ \times H)$.
A linear operator $L: \dom(L) \subset C_m(\R_+ \times H) \rightarrow C_m(\R_+ \times H)$ is a \textit{$\pi$-closed operator} if the graph $\{ (\varphi, L \varphi) : \varphi \in \dom(L) \}$ is $\pi$-closed in $C_m(\R_+ \times H) \times C_m(\R_+ \times H)$. If a subdomain $D \subset \dom(L)$ is such that for any $\varphi \in \dom(L)$ there exists a sequence $(\varphi_n)_n$ in $D$ with
\begin{align}
\label{eq: pi_core_property}
    \pilim_n \varphi_n = \varphi \text{ and } \pilim_n L \varphi_n = L \varphi
\end{align}
we call $D$ a $\pi$-core for $(L, \dom(L)).$

The following definition of $\pi$-semigroups is based on \cite{Priola1999Markov}. They are exactly those semigroups that are `strongly continuous' with respect to $\pi$-convergence.
\begin{definition}[$\pi$-semigroup]
\label{def: pi_semigroup}
Let $(T_t)_{t \geq 0}$ be a semigroup of bounded, linear operators on $C_m(\R_+ \times H), m \geq 0$. We say $(T_t)_{t \geq 0}$ is a \textit{$\pi$-semigroup} if the following conditions hold:
\begin{itemize}
    \item[(i)] \label{item: i} There exist some $M \geq 1$ and $\om \in \R$ such that for all $t \geq 0$
    \begin{align}
    \label{eq: exp_bound_pi_semigrp}
                \|T_t \| \leq M \exp(\om t).
    \end{align}
    \item[(ii)] \label{item: ii}For any $t \geq 0$ and any $(\varphi_n)_n \subset C_m(\R_+ \times H)$ such that $\pilim_n \varphi_n = \varphi \in C_m(\R_+ \times H)$ we have 
    \begin{align}
        \pilim_n T_t \varphi_n = T_t \varphi \in C_m(\R_+ \times H).
    \end{align}
    \item[(iii)] For any $\varphi \in C_m(\R_+ \times H)$ and $(s,x) \in \R_+ \times H$ fixed, the mapping 
    \begin{align}
        [0,\infty) \longrightarrow \R, ~t \mapsto T_t \varphi(s,x) 
    \end{align} is continuous.
\end{itemize}
\end{definition}

\begin{remark}
    Note that contrary to the case of semigroups that are strongly continuous with respect to the norm topology on $C_m(\R_+ \times H)$, condition $(i)$ needs to be assumed as it does not follow from the other conditions. 
\end{remark}

\begin{remark}
Conditions $(i)$ and $(iii)$ imply that any $\pi$-semigroup $(T_t)_{t \geq 0}$ is `strongly continuous' with respect to $\pi$-convergence, i.e. if $(t_n)_n$ is a sequence such that $t_n \downarrow 0$ then 
\begin{align*}
        \pilim_n T_{t_n} \varphi = \varphi, \quad \varphi \in C_m(\R_+ \times H).
\end{align*}
We then write $\pilim_{t \downarrow 0} T_t \varphi = \varphi$.
\end{remark}

\begin{lemma}
\label{lem: Tt_is_pisemigroup}
 For any $m \geq 0$, the semigroup $(T_t)_{t \geq 0}$ defined in \eqref{eq: T_t} is a $\pi$-semigroup on $C_m(\R_+ \times H)$.
\end{lemma}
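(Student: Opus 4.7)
The plan is to verify, in turn, the semigroup property together with each of the three defining conditions of Definition~\ref{def: pi_semigroup}. The two analytic tools I will lean on throughout are the moment bound \eqref{eq: expectation_bound_mild_solution} (to dominate expectations against the weight $1+\|x\|^m$) and continuous dependence of the mild solution on its starting time and initial data, both of which follow from standard Grönwall-plus-Burkholder arguments applied to the mild form \eqref{eq: mild_sol}.

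First, I would check that $T_t$ sends $C_m(\R_+\times H)$ into itself with the exponential bound required by (i). For $\varphi\in C_m$, inserting the moment bound gives
\begin{align*}
|(T_t\varphi)(s,x)| \leq \|\varphi\|_m\,\E\bigl[1+\|X(t+s,s,x)\|^m\bigr] \leq \|\varphi\|_m\bigl(1+C_m e^{\gamma_m t}(1+\|x\|^m)\bigr),
\end{align*}
which yields $\|T_t\|\leq M e^{\omega t}$ with $\omega=\gamma_m$ and $M$ depending on $C_m$. Continuity of $(s,x)\mapsto (T_t\varphi)(s,x)$ then follows by taking $(s_n,x_n)\to(s,x)$: the difference $X(t+s_n,s_n,x_n)-X(t+s,s,x)$ tends to zero in $L^p$ for every $p\geq 1$, and uniform integrability supplied by a higher-moment version of \eqref{eq: expectation_bound_mild_solution} justifies exchanging limit and expectation, combined with continuity of $\varphi$. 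The semigroup identity $T_{t+r}=T_t T_r$ comes from the flow property $X(s+t+r,s,x)=X(s+t+r,s+r,X(s+r,s,x))$, a consequence of pathwise uniqueness for \eqref{eq: semilin_spde}, together with the Markov property and the tower rule.

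For condition (ii), assume $\pilim_n \varphi_n=\varphi$. Pointwise convergence of $\varphi_n$ forces $\varphi_n(s+t,X(s+t,s,x))\to\varphi(s+t,X(s+t,s,x))$ $\P$-a.s., while the envelope $|\varphi_n(s+t,y)|\leq(\sup_n\|\varphi_n\|_m)(1+\|y\|^m)$ is integrable by the moment bound; dominated convergence yields pointwise convergence of $T_t\varphi_n$ to $T_t\varphi$, and the uniform estimate from (i) gives $\sup_n\|T_t\varphi_n\|_m<\infty$. Condition (iii) is treated in the same spirit: for $t_n\to t$, the identity $X(s+t_n,s,x)\to X(s+t,s,x)$ $\P$-a.s.\ holds because $X$ admits a $\P$-almost surely continuous modification, so dominated convergence against the same envelope produces $T_{t_n}\varphi(s,x)\to T_t\varphi(s,x)$.

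The main obstacle — and the only step that is not immediate — is the joint continuity of $(s,x)\mapsto(T_t\varphi)(s,x)$ needed in (i), because the time argument $s$ appears simultaneously as the initial time of the SPDE and inside $\varphi(s+t,\cdot)$. The cleanest route is to view $Y(t,(s,x))=(t+s,X(t+s,s,x))$ as an autonomous Markov process on $\R_+\times H$ and estimate $\E\|X(t+s_n,s_n,x_n)-X(t+s,s,x)\|^p$ by splitting the mild representation into a drift piece (controlled via the Lipschitz estimate on $F$ and Grönwall) and the stochastic convolution $\int S_{t+s-u}\sqrt{Q}\,\df W(u)$ (controlled via Burkholder together with strong continuity of $(S_t)$), carefully accounting for the fact that shifting the lower endpoint of integration from $s$ to $s_n$ contributes only an error of order $|s_n-s|^{1/2}$. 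Once this $L^p$-continuity of the underlying flow is in hand, continuity of $\varphi$ and uniform integrability close the argument and the three conditions of Definition~\ref{def: pi_semigroup} are all verified.
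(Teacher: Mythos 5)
Your proof is correct and follows essentially the same route as the paper: the growth bound \eqref{eq: expectation_bound_mild_solution} gives condition (i) with $M=1+C_m$ and $\omega=\gamma_m$, and conditions (ii) and (iii) follow by dominated convergence against the envelope $(\sup_n\|\varphi_n\|_m)(1+\|y\|^m)$ together with the almost sure continuity of $t\mapsto X(t,s,x)$. The only difference is that you additionally spell out the joint continuity of $(s,x)\mapsto (T_t\varphi)(s,x)$ and the semigroup identity via the flow property, which the paper leaves implicit; this is a welcome addition rather than a deviation.
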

\begin{proof}
We show that $(T_t)_{t \geq 0}$ satisfies properties (i) to (iii) in Definition \ref{def: pi_semigroup}.
Let $\varphi \in C_m(\R_+ \times H)$. Then it holds for any $(s,x)$ that
\begin{align}
\label{eq: 9p8u1231fsdfj93}
\begin{split}
    |(T_t \varphi)(s,x)| &\leq \E[ | \varphi(s+t, X(s+t,s,x))  | ]  \\
    &\leq \|\varphi\|_{m} \E[ 1 + \| X(s+t,s,x) \|^m] \\
    &\leq \| \varphi\|_{m} (1 + C_m \exp(\gamma_m t) (1 + \|x\|^m)),
\end{split}
\end{align}
where we used the definition of $\|\varphi\|_{m}$ in the second and the inequality \eqref{eq: expectation_bound_mild_solution} in the third step. It follows that
\begin{align*}
    \| T_t \varphi\|_{m} &= \sup_{(s,x) \in \R_+ \times H} \frac{| T_t \varphi(s,x)|}{1 + \|x\|^m} \\
    &\leq \| \varphi\|_{m} \sup_{(s,x) \in \R_+ \times H} \frac{(1 + C_m \exp(\gamma_m t) (1 + \|x\|^m))}{1+ \|x\|^m} \\
    &\leq  \| \varphi\|_{m} (1+ C_m) \exp(\gamma_m t)
\end{align*}
and thus \eqref{eq: exp_bound_pi_semigrp} holds with $M = (1+ C_m)$ and $\om = \gamma_m$.

To show (ii), let $(\varphi_n)_n$ be a sequence in $C_m(\R_+ \times H)$ such that $\pilim_n \varphi_n = \varphi$.
It then follows by the dominated convergence theorem that
\begin{align*}
    \lim_n (T_t \varphi_n)(s,x) &= \lim_n \E[ \varphi_n(t+s, X(t+s,s,x))] \\
    &= \E[ \varphi(t+s, X(t+s,s,x))] = (T_t \varphi)(s,x).
\end{align*}
Furthermore, from \eqref{eq: 9p8u1231fsdfj93} and $\sup_n \|\varphi\|_{m} < \infty$ it follows that $\sup_n \|T_t \varphi_n\|_{m} < \infty$ and therefore that $\pilim_n T_t \varphi_n = T_t \varphi. $
To show (iii), it suffices to note that continuity of $t \mapsto T_t \varphi(s,x)$ follows from the almost sure continuity of $t \mapsto X(\cdot, s,x)$ and dominated convergence.  
\end{proof}

The \textit{infinitesimal generator} of a $\pi$-semigroup is defined in a similar manner as for a strongly continuous semigroup.
\begin{definition}
\label{def: inf_generator}
    Let $(T_t)_{t \geq 0}$ be a $\pi$-semigroup on $C_m(\R_+ \times H)$. 
    The \textit{infinitesimal generator} $L$ of $(T_t)_{t \geq 0}$ is the operator defined via 
    \begin{align}
    \label{eq: def_inf_gen}
    \begin{split}
    \begin{cases}
        \dom_m(L) &= \left\{ \varphi \in C_m(\R_+ \times H): \exists \psi \in C_m(\R_+ \times H) \text{ s.t. } \pilim_{t \downarrow 0} \dfrac{T_t \varphi - \varphi}{t} = \psi \right\} \\
        (L \varphi)(s,x) &= \lim_{t \downarrow 0} \dfrac{(T_t \varphi)(s,x) - \varphi(s,x)}{t}, \quad \varphi \in \dom_m(L), (s,x) \in \R_+ \times H.
    \end{cases}
    \end{split}
    \end{align}
\end{definition}

As the upcoming result shows, the infinitesimal generator of a $\pi$-semigroup satisfies the common properties characteristic for generators of strongly continuous semigroups.
\begin{theorem} 
\label{thm: properties_inf_gen}
   Let $L$ be the infinitesimal generator of a $\pi$-semigroup $(T_t)_{t \geq 0}$ on $C_m(\R_+ \times H)$. Then:
   \begin{itemize}
       \item[(i)] The domain $\dom_m(L)$ is $\pi$-dense in $C_m(\R_+ \times H)$.
       \item[(ii)] The operator $L$ is the unique $\pi$-closed operator such that for all $\lambda > \omega$ the resolvent $R(\lambda, L) = (\lambda - L)^{-1}$ is a bounded, linear operator on $C_m(\R_+ \times H)$ with 
   \begin{align}
   \label{eq: resolvent}
       R(\lambda, L) \varphi(s,x) = \int_0^{\infty} \exp(-\lambda t) T_t \varphi(s,x) \, \df t.
   \end{align}
      \item[(iii)] It holds that $T_t(\dom_m(L)) \subset \dom_m(L)$ and for all $\varphi \in \dom_m(L)$ and fixed $(s,x)\in \R_+ \times H$ the function $t \mapsto T_{t} \varphi(s,x)$ is differentiable with
    \begin{align}
    \label{eq: differential_property_semigroup}
        \dfrac{\df}{\df t}T_t \varphi(s,x) = L T_t \varphi(s,x) = T_t L \varphi(s,x).
    \end{align}
   \end{itemize}  
\end{theorem}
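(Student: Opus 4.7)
The plan is to mimic the classical Hille--Yosida proof for strongly continuous semigroups, with pointwise convergence combined with the uniform bound in (i) of Definition \ref{def: pi_semigroup} playing the role of norm convergence. The key tool throughout is a Riemann integral for $C_m$-valued functions: for any $\varphi \in C_m(\R_+ \times H)$ and any $0 \leq a < b$, I would define $\left( \int_a^b T_s \varphi \, \df s \right)(s,x) := \int_a^b (T_s \varphi)(s,x) \, \df s$ pointwise, and check via condition (iii) of the $\pi$-semigroup definition and the exponential bound \eqref{eq: exp_bound_pi_semigrp} that this integral lies in $C_m(\R_+ \times H)$ with norm controlled by $\int_a^b M e^{\omega s}\, \df s \cdot \|\varphi\|_m$.

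I would start by constructing the candidate resolvent $R(\lambda)\varphi(s,x) := \int_0^\infty e^{-\lambda t}(T_t\varphi)(s,x)\,\df t$ for $\lambda > \omega$. Absolute convergence and the bound $\|R(\lambda)\varphi\|_m \leq \frac{M}{\lambda - \omega}\|\varphi\|_m$ are immediate from \eqref{eq: exp_bound_pi_semigrp}. The core computation is the difference quotient
\begin{align*}
\frac{T_h R(\lambda)\varphi - R(\lambda)\varphi}{h}(s,x) = \frac{e^{\lambda h} - 1}{h}R(\lambda)\varphi(s,x) - \frac{e^{\lambda h}}{h}\int_0^h e^{-\lambda t}(T_t \varphi)(s,x)\,\df t,
\end{align*}
obtained by a change of variables. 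As $h \downarrow 0$, the right-hand side tends pointwise to $\lambda R(\lambda)\varphi(s,x) - \varphi(s,x)$, and I would verify that the $\pi$-norms remain uniformly bounded in $h$ using the exponential bound, thus establishing the $\pi$-limit. This shows $R(\lambda)\varphi \in \dom_m(L)$ and $(\lambda - L)R(\lambda)\varphi = \varphi$. A symmetric computation for $\varphi \in \dom_m(L)$ yields $R(\lambda)(\lambda - L)\varphi = \varphi$, so $R(\lambda)$ is indeed the resolvent $R(\lambda, L)$, and \eqref{eq: resolvent} holds.

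For (i), $\pi$-density follows by showing $\lambda R(\lambda, L)\varphi \to \varphi$ as $\lambda \to \infty$: after the substitution $u = \lambda t$ we get $\lambda R(\lambda,L)\varphi(s,x) = \int_0^\infty e^{-u}(T_{u/\lambda}\varphi)(s,x)\,\df u$, and condition (iii) together with the uniform bound and dominated convergence gives pointwise convergence to $\varphi(s,x)$; the $\pi$-norms are uniformly bounded by $\frac{M\lambda}{\lambda - \omega}\|\varphi\|_m$, which is bounded for large $\lambda$. For $\pi$-closedness, if $\pilim_n \varphi_n = \varphi$ and $\pilim_n L\varphi_n = \psi$, applying the already-established identity $\varphi_n = R(\lambda, L)(\lambda \varphi_n - L\varphi_n)$ and using that $R(\lambda, L)$ preserves $\pi$-limits (a consequence of the bound on $R(\lambda,L)$ together with pointwise dominated convergence applied to \eqref{eq: resolvent}, which also uses property (ii) of $\pi$-semigroups) yields $\varphi = R(\lambda, L)(\lambda \varphi - \psi) \in \dom_m(L)$ with $L\varphi = \psi$. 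Uniqueness of a $\pi$-closed operator with resolvent \eqref{eq: resolvent} for all $\lambda > \omega$ follows because any such operator must have $R(\lambda)$ as its resolvent, hence domain $\operatorname{im}(R(\lambda, L)) = \dom_m(L)$ and $\tilde L = \lambda - R(\lambda, L)^{-1} = L$. For (iii), the semigroup property gives $\frac{T_h(T_t\varphi) - T_t\varphi}{h} = T_t \frac{T_h\varphi - \varphi}{h}$; property (ii) of Definition \ref{def: pi_semigroup} then transports the $\pi$-limit through $T_t$, yielding $T_t\varphi \in \dom_m(L)$ with $L T_t \varphi = T_t L\varphi$, and right-differentiability at any $t$ follows. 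Two-sided differentiability and continuity of $t \mapsto LT_t\varphi(s,x)$ are obtained from the identity $T_{t+h}\varphi - T_t\varphi = \int_t^{t+h} T_s L\varphi\,\df s$, which one verifies by applying the bounded functional $\delta_{(s,x)}$ to the standard $C_m$-valued semigroup computation.

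The main obstacle I expect is the repeated interchange of limits with the Riemann integrals and with $T_t$: each such interchange must be justified not merely pointwise but in $\pi$-sense, which requires producing a uniform $\|\cdot\|_m$-bound to complement the pointwise convergence. The exponential bound \eqref{eq: exp_bound_pi_semigrp} is what consistently delivers this uniform control, and keeping track of this bound carefully through each step (especially in the proof of $\pi$-closedness, where $R(\lambda, L)$ must be shown to preserve $\pi$-limits) is the technically delicate part.
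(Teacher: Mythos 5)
Your proposal is correct and follows exactly the Laplace-transform/resolvent route (adapted Hille--Yosida argument, with the exponential bound supplying the uniform $\|\cdot\|_m$ control needed to upgrade pointwise limits to $\pi$-limits) that the paper relies on: the paper itself gives no proof but defers to \cite{Priola1999Markov}, Propositions 3.2--3.6, which carry out precisely this argument for $\pi$-semigroups on $C_b(H)$. Your sketch correctly identifies the genuinely delicate points, namely the interchange of $T_h$ with the Riemann integral defining the resolvent and the $\pi$-continuity of $R(\lambda,L)$ used in the closedness step.
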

\begin{proof}
See \cite{Priola1999Markov}, Propositions 3.2 to 3.6., where these properties have been shown in the context of $\pi$-semigroups on $C_b(H)$. 
\end{proof}

The following is essentially a version of \textit{Dynkin's formula} in the context of $\pi$-semigroups.
\begin{lemma}
\label{lem: dynkins_formula}
Let $Y(t) = (t, X(t))$ be the space-time process of $X(t)$. Then for any $\varphi \in \dom(L)$ the process
\begin{align}
    D^{\varphi}(t) = \varphi(Y(t)) - \int_0^t L \varphi(Y(s)) \, \df s
\end{align}
is an $\calF_t$-adapted $\P$-martingale. We call $D^{\varphi}(t)$ the \textit{Dynkin martingale} (of $(\varphi, L \varphi)$).
\end{lemma}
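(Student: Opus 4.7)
The plan is to prove Dynkin's formula in the usual way, exploiting the Markov property of $Y$ together with the semigroup identity derived from Theorem \ref{thm: properties_inf_gen}(iii). Adaptedness is automatic from the adaptedness of $X$, and the integrability of $D^{\varphi}(t)$ follows from $\varphi, L\varphi \in C_m(\R_+ \times H)$ together with the moment bound \eqref{eq: expectation_bound_mild_solution}, so the real content is the martingale property. I would fix $0 \leq s \leq t$ and aim to show $\E[D^{\varphi}(t) - D^{\varphi}(s) \mid \calF_s] = 0$.

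First I would invoke Theorem \ref{thm: properties_inf_gen}(iii) to obtain, for every $\varphi \in \dom_m(L)$ and every $(r,y) \in \R_+ \times H$, the identity
\begin{align*}
(T_u \varphi)(r,y) - \varphi(r,y) = \int_0^u (T_v L\varphi)(r,y) \, \df v, \quad u \geq 0,
\end{align*}
obtained by integrating $\tfrac{\df}{\df v} T_v \varphi = T_v L \varphi$ pointwise from $0$ to $u$. Next, I would use the fact that $Y(t) = (t, X(t))$ is Markovian on $\R_+ \times H$ (which is inherited directly from the Markov property of $X$ noted after \eqref{eq: mild_sol}) to write
\begin{align*}
\E[\varphi(Y(t)) \mid \calF_s] = (T_{t-s} \varphi)(Y(s))
\quad \text{and} \quad
\E[L\varphi(Y(u)) \mid \calF_s] = (T_{u-s} L\varphi)(Y(s))
\end{align*}
$\P$-a.s.\ for $u \geq s$. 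Combining these two identities with the semigroup identity yields
\begin{align*}
\E[\varphi(Y(t)) - \varphi(Y(s)) \mid \calF_s]
= (T_{t-s}\varphi)(Y(s)) - \varphi(Y(s))
= \int_0^{t-s} (T_v L\varphi)(Y(s)) \, \df v.
\end{align*}

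Then I would handle the integral term using Fubini's theorem, which is justified because $|L\varphi(Y(u))| \leq \|L\varphi\|_m (1 + \|X(u)\|^m)$ and \eqref{eq: expectation_bound_mild_solution} gives a locally bounded moment bound; after exchanging expectation and integral and using the Markov property,
\begin{align*}
\E\!\left[ \int_s^t L\varphi(Y(u)) \, \df u \,\Big|\, \calF_s \right]
= \int_s^t (T_{u-s} L\varphi)(Y(s)) \, \df u
= \int_0^{t-s} (T_v L\varphi)(Y(s)) \, \df v.
\end{align*}
Subtracting the two expressions gives $\E[D^{\varphi}(t) - D^{\varphi}(s) \mid \calF_s] = 0$, which is the martingale property.

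The main obstacle, such as it is, is technical rather than conceptual: one must be a little careful that the pointwise semigroup identity and the conditional-expectation identity via the Markov property really hold $\P$-a.s.\ simultaneously, and that Fubini is legitimately applicable. Both reduce to the polynomial growth bound built into the definition of $C_m(\R_+ \times H)$ combined with \eqref{eq: expectation_bound_mild_solution}, so no new assumptions are needed beyond Assumption \ref{ass: basic_assumptions_SPDE} and $\varphi \in \dom_m(L)$.
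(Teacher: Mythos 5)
Your proposal is correct and follows essentially the same route as the paper's proof: both rest on the Markov property of the space-time process $Y$ together with the integrated form of the identity $\frac{\df}{\df v}T_v\varphi = T_v L\varphi$ from Theorem \ref{thm: properties_inf_gen}(iii), and a conditional Fubini argument for the integral term. Your explicit justification of integrability and Fubini via the polynomial growth bound and \eqref{eq: expectation_bound_mild_solution} is a welcome addition that the paper leaves implicit.
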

\begin{proof}
    Clearly $Y(t) = (t, X(t,x))$ is $\calF_t$-adapted with initial condition $Y(0) = (0,x_0)$. It holds for any $r \leq t$ that
    \begin{align}
    \label{eq: 09234u1ndp}
    \begin{split}
    \E[  D^{\varphi}(t) \mid \calF_r] &= \E \left[ \varphi(Y(t))  - \int_0^t L \varphi(Y(u)) \, \df u \mid \calF_r \right] \\
    &= \E[ \varphi(Y(t)) \mid \calF_r] - \E \left[ \int_0^r L \varphi(Y(u))\,  \df u \mid \calF_r \right] - \E \left[ \int_r^t L \varphi(Y(u)) \, \df u \mid \calF_r \right] \\
    &= T_{t-r} \varphi(Y(r)) - \int_0^r L \varphi(Y(u)) \, \df u  -  \int_r^t T_{u-r} L \varphi(Y(r)) \, \df u ,
    \end{split}
    \end{align}
    where we used the Markov property and $\calF_t$-adaptedness of $Y(t)$ in the last step. By Theorem \ref{thm: properties_inf_gen} (iii) and a substitution we have 
    \begin{align*}
    \int_r^t T_{u-r} L \varphi(Y(r))\,  \df u  &= \int_0^{t-r} T_{u} L \varphi(Y(r))\,  \df u  \\
    &= T_{t-r} \varphi(Y(r)) - \varphi(Y(r)) \quad \P\text{-a.s.}
    \end{align*}
    Plugging this into \eqref{eq: 09234u1ndp} gives $\E[  D^{\varphi}(t) \mid \calF_r] = \varphi(Y(r))  - \int_0^r L \varphi(Y(u)) \, \df u$
    which gives the claim. 
\end{proof}

In general, the abstract definition of the infinitesimal generator given in \eqref{eq: def_inf_gen} does not lend itself easily to a closed form expression of $L \varphi$ for $ \varphi \in \dom_m(L)$.
However, in many cases one can instead construct a suitable $\pi$-core for $(L, \dom_m(L))$ on which $L$ acts in a more `descriptive' manner. 
It turns out that in this case, $L$ acts as the Kolmogorov operator associated with $\eqref{eq: semilin_spde}$ on the space of \textit{exponential test functions} $\calERH$, defined as the span of the real and imaginary parts of the functions
\begin{align}
\label{eq: exponential_test_functions}
   \R_+ \times H \rightarrow \R, ~(s,x) \mapsto \exp(i (\langle x, a \rangle + cs)), \quad a \in \dom(A^*), c \in \R,
\end{align}
and $\calERH$ defines a $\pi$-core of $(L, \dom_m(L))$ as the following lemma shows. The $\pi$-core property of $\calERH$ will play a crucial part in the proof our main result.

\begin{lemma}
\label{lem: pi_core}
For any $m \geq 1$, the space $\calERH$ is a subset of $\dom_m(L)$ with $L \varphi = L_0 \varphi$, where
\begin{align}
\begin{split}
    (L_0 \varphi)(t,x) &= \partial_t \varphi(t,x) + \langle x, A^* \Df_x \varphi(t,x) \rangle + \langle F(t,x), \Df_x \varphi(t,x) \rangle + \frac{1}{2} \tr\left(Q \Df_x^2 \varphi(t,x)\right)
\end{split}
\end{align}
for any $\varphi \in \calERH.$
Moreover, $\calERH$ is a $\pi$-core for $(L, \dom_m(L))$, i.e. for any $\varphi \in \dom_m(L)$ there exists a sequence $(\varphi_n)_n \subset \calERH$ \footnote{This approximation relies on multi-indexed sequences, see Appendix \ref{app: B}. However, for ease of notation, we may assume that the sequence only has one index. } such that
\begin{align}
    \label{eq: ERH_pi_core_property}
    \pilim_n \varphi_n = \varphi \text{ and } \pilim_n L_0 \varphi_n = L \varphi.
\end{align}
Furthermore, if $\varphi \in (L, \dom_m(L))$ is such that $\Df_x \varphi \in C_m(\R_+ \times H;H)$, the approximating sequence $(\varphi_n)_n$ in $\eqref{eq: ERH_pi_core_property}$ can be chosen such that
\begin{align}
\label{eq: picore_Dxapprox}
        \pilim_n \Df_x \varphi_n =  \Df_x \varphi.
\end{align}
\end{lemma}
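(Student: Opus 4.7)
My plan is to handle the three assertions in sequence, leaning on Itô's formula for the Yosida-approximated equation for the first, and on a Fourier-analytic approximation on the invariant Gaussian measure for the core property.

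\textbf{Step 1 (the identification $L\varphi = L_0\varphi$ on $\calERH$).} Fix $\varphi(t,x) = \exp(i(\langle x, a\rangle + ct))$ with $a \in \dom(A^*)$ and $c \in \R$. The restriction $a \in \dom(A^*)$ makes $L_0\varphi$ a well-defined, bounded continuous function, since $\langle Ax, \Df_x\varphi(t,x)\rangle = i \langle x, A^*a\rangle \varphi(t,x)$. Applying Itô's formula to $\varphi(t, X_n(t,s,x))$ with the Yosida approximation $X_n$ from \eqref{eq: def_Xn_yosida_approx} is legitimate because $A_n$ is bounded and $\varphi$ is smooth with bounded derivatives; it produces
\begin{align*}
\varphi(t, X_n(t,s,x)) - \varphi(s,x) = \int_s^t L_0^{(n)} \varphi(u, X_n(u,s,x))\, \df u + M_n(t),
\end{align*}
where $L_0^{(n)}$ is $L_0$ with $A$ replaced by $A_n$ and $M_n$ is a square-integrable martingale. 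Take expectations, then pass $n \to \infty$ using \eqref{eq: yosida_Xn_approx} together with $A_n^* a \to A^* a$ (which holds because $a \in \dom(A^*)$ and $A_n^* = n A^* R(n,A^*)$); this gives $T_t \varphi(s,x) - \varphi(s,x) = \int_0^t T_u L_0\varphi(s,x)\, \df u$. Dividing by $t$ and sending $t \downarrow 0$, using $\pi$-continuity of $(T_t)$ at $0$ (Remark after Definition \ref{def: pi_semigroup}) applied to the continuous bounded function $L_0\varphi$, yields $L\varphi = L_0\varphi$.

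\textbf{Step 2 ($\pi$-core property).} Given $\varphi \in \dom_m(L)$, I would follow the strategy of \cite{manca2008kolmogorov,manca2009fokker}. The idea is to smooth $\varphi$ by the Ornstein--Uhlenbeck semigroup associated to the linear part and then truncate a Fourier-type expansion in the invariant measure $\nu \sim \calN(0, Q_\infty)$. More concretely, first reduce to the case where $\varphi$ is of bounded support in $t$ by multiplying by suitable temporal cut-offs (a first index of the multi-index scheme), then represent the spatial part via its characteristic functional; approximation by Riemann sums of $\int e^{i\langle x,a\rangle} \hat\varphi(t, \df a)$ over $a \in \dom(A^*)$ (dense in $H$) produces a sequence $(\varphi_n)_n \subset \calERH$ with $\varphi_n \to \varphi$ pointwise and polynomially bounded, i.e.\ $\pilim \varphi_n = \varphi$. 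The key point is that the same expansion can be differentiated termwise to give $\pilim L_0 \varphi_n = L\varphi$: after applying $L_0$, each summand picks up a factor that is polynomially bounded in $a$, so the truncation rate in the multi-index must be coupled to the decay of $\hat\varphi$ so that both $\varphi_n \to \varphi$ and $L_0 \varphi_n \to L\varphi$ in $\pi$. This coupling is precisely what is carried out in the appendix.

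\textbf{Step 3 (gradient convergence).} If $\Df_x\varphi \in C_m(\R_+ \times H; H)$, the same multi-index approximation yields $\pilim \Df_x \varphi_n = \Df_x \varphi$ because the exponentials have the explicit derivative $\Df_x [\exp(i(\langle x, a\rangle + ct))] = i a \exp(i(\langle x, a\rangle + ct))$; termwise differentiation of the truncated Fourier-type expansion is dominated by the same polynomial bounds used in Step 2, and the growth hypothesis on $\Df_x\varphi$ furnishes the uniform $\|\cdot\|_m$ control needed for $\pi$-convergence.

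\textbf{Expected difficulty.} Step 1 and Step 3 are essentially bookkeeping once Step 2 is in place. The genuine work is Step 2, i.e.\ producing a \emph{joint} $\pi$-approximation of $\varphi$ and its image $L\varphi$ from exponentials restricted to $\dom(A^*)$. The delicacy lies in choosing the truncation parameters (spatial frequency, temporal frequency, and time support) so that both convergences hold simultaneously with uniform $\|\cdot\|_m$ bounds, which is why the multi-indexed construction in Appendix \ref{app: B} is necessary and why the argument cannot be reduced to a single-index diagonal sequence without additional care.
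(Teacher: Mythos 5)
Your Step 1 is essentially the argument the paper itself uses in Step One of the proof of Lemma \ref{lem: hDynkinMartingaleItoProcess}: Itô's formula for the Yosida-approximated equation, passage to the limit using $A_n^*a \to A^*a$, and the representation $T_t\varphi - \varphi = \int_0^t T_u L_0\varphi\,\df u$. One slip: $L_0\varphi$ is \emph{not} bounded for $\varphi\in\calERH$ --- the terms $\langle x, A^*a\rangle$ and $\langle F(t,x),\Df_x\varphi\rangle$ give linear growth (the paper makes exactly this point in the remark following the lemma; it is the reason the statement requires $m\ge 1$ and fails for $m=0$). The slip is harmless here, since $L_0\varphi\in C_m(\R_+\times H)$ for $m\ge1$ and the uniform bound $\sup_{0<t\le1}\|(T_t\varphi-\varphi)/t\|_m<\infty$ needed for $\pi$-convergence of the difference quotients follows from $\|T_uL_0\varphi\|_m\le M e^{\omega u}\|L_0\varphi\|_m$.

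The genuine gap is in Step 2. You expand $\varphi$ as a Fourier-type integral $\int e^{i\langle x,a\rangle}\hat\varphi(t,\df a)$, truncate by Riemann sums, and then assert that "the same expansion can be differentiated termwise to give $\pilim_n L_0\varphi_n = L\varphi$." This presupposes that $L\varphi$ is obtained by applying the differential operator $L_0$ to $\varphi$, i.e.\ that a general $\varphi\in\dom_m(L)$ is twice Fréchet differentiable with enough structure for $L_0\varphi$ to make sense. It is not: $\dom_m(L)$ is defined abstractly through $\pi$-limits of difference quotients, and the whole point of the $\pi$-framework is that it contains functions on which $L_0$ does not act. Termwise differentiation therefore has no target to converge to, and the "coupling of truncation rates to the decay of $\hat\varphi$" cannot be set up. The paper's route is structurally different: the purely analytic approximation (joint $\pi$-approximation of $\varphi$ and $\Df_x\varphi$, for $\varphi$ with $\Df_x\varphi\in C_m$) is done in Appendix \ref{app: B}, Lemmas \ref{app: ERH_approx_property} and \ref{app: ERH_approx_property_Cm}, by periodization and Fejér/Cesàro summation in finite dimensions, finite-dimensional projection, and composition with $n R(n,A)$ to force the frequencies into $\dom(A^*)$; the identification of $L$ on all of $\dom_m(L)$ with the $\pi$-closure of $L_0|_{\calERH}$ is then delegated to \cite{manca2009fokker}, Theorem 1.3, applied to the space-time process $Y$ to absorb the time-inhomogeneity, and that argument goes through the semigroup and resolvent rather than through differentiation of $\varphi$. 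Relatedly, your appeal to density of $\dom(A^*)$ in $H$ glosses over a real step: replacing a frequency $a$ by a nearby $a'\in\dom(A^*)$ gives no control on $\langle x,A^*a'\rangle$, which is precisely why the paper composes with the resolvent instead of perturbing frequencies.
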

\begin{proof}
    This follows from some slight generalizations of the results in \cite{manca2009fokker}, where the $\pi$-core property of exponential test functions for generators of $\pi$-semigroups was studied in the context of time homogeneous SPDEs. For more details, see Appendix \ref{app: B}.
\end{proof}

\begin{remark}
Note that for $\varphi \in \calERH$, $L \varphi$ is in general not a bounded function anymore. 
Let for example $\varphi(t,x) = \sin(\langle x, h \rangle), h \in \dom(A^*)$. 
Then $\Df_x \varphi = h \cos(\langle x, h \rangle)$ and $\Df_x^2 \varphi(x) = - h \otimes h \sin(\langle x, h \rangle)$ and thus
\begin{align*}
    L \varphi = \cos(\langle x, h \rangle) (\langle x, A^* h \rangle  + \langle F(t,x), h  \rangle) - \frac{1}{2} \sin(\langle x, h\rangle) \langle \sqrt{Q} h, \sqrt{Q}h \rangle,
\end{align*}
which is not bounded but of linear growth under the Lipschitz assumption on $F$.
In particular, $\calERH$ is not a $\pi$-core for $(L, \dom_m(L))$ if $m = 0$.
\end{remark}

\section{Main result}
\label{sec: 3_main_result}
In this section we present the proof of our main result. It establishes conditions on the $h$-function for which, under the exponential change of measure $\P^h$, the mild solution $X$ to the SPDE \eqref{eq: semilin_spde} is a mild solution to yet another SPDE with an additional drift term dependent on the $h$-function of choice.

Our point of departure is that, following Lemma \ref{lem: dynkins_formula}, for any $h \in \dom_m(L)$, the process
\begin{align*}
    D^h(t) = h(t,X(t)) - \int_0^t L h(s,X(s)) \, \df s 
\end{align*}
is a $\P$-martingale. Additionally, it can be shown that for any positive function $h \in \dom_m(L)$, the process 
\begin{align}
\label{eq: E^h(t)}
    E^h(t) = \dfrac{h(t,X(t))}{h(0,x_0)} \exp\left( - \int_0^t \dfrac{L h}{h}(s,X(s)) \, \df s \right), \quad t \geq 0,
\end{align}
is a continuous $\P$-local martingale whenever it exists, see for example \cite{PalmowskiRoski2002technique}, Lemma 3.1.
If $E^h$ is a true $\P$-martingale, it follows from Lemma \ref{lem: ExpChangeMeasure} that it defines a unique measure $\P^h$ on $\calF$ such that
\begin{align}
\label{eq: changeofmeasure}
    \dfrac{\df \P_t^{h}}{\df \P_t} =  E^h(t), \quad t > 0.
\end{align}

Throughout this section we fix some arbitrary $m \geq 1$. We will need the following assumptions.
\begin{assumption} 
\label{ass: hgoodfunction}
The function $h: \R_+ \times H \longrightarrow \R_{>0}$ satisfies:
\begin{itemize} 
\item[(i)] $h \in \dom_m(L)$ such that $h^{-1} Lh \in C_m(\R_+ \times H)$.
\item[(ii)] $h$ is Fréchet differentiable in $x$ such that $\Df_x h \in C_m(\R_+ \times H;H)$. 
\item[(iii)] $h$ is such that $E^h$ is a $\P$-martingale.
\end{itemize}
\end{assumption}

The following is this sections main result.
\begin{theorem}
\label{thm: weaksolution}
    Let $h$ satisfy Assumption \ref{ass: hgoodfunction} and let $\P^h$ be the measure defined by \eqref{eq: changeofmeasure}. Then, for any $T > 0$, the process 
    \begin{align}
    \label{eq: W^h}
    W^h(t) = W(t) - \int_0^t \sqrt{Q} \Df_x \log h(s,X(s)) \df s, \quad t \in [0,T],
    \end{align}
    is a cylindrical Wiener process with respect to $\P^h$.
    In particular, $X$ under $\P^h$ is a mild solution to the SPDE
    \begin{align}
        \df X(t) = \left[ A X(t) + F(t,X(t)) + Q \Df_x \log h(t,X(t)) \right] \df t + \sqrt{Q} \df W^h(t), \quad t \in [0,T].
    \end{align}
\end{theorem}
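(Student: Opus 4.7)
My plan is to first establish that the Dynkin martingale $D^h$ admits the stochastic integral representation
\[
D^h(t) = h(0,x_0) + \int_0^t \langle \sqrt{Q}\, \Df_x h(s,X(s)), \df W(s)\rangle,
\]
and then read off by a product rule computation that $E^h$ is the stochastic exponential of the $\P$-martingale $M(t):= \int_0^t \langle \sqrt{Q}\, \Df_x \log h(s,X(s)), \df W(s)\rangle$. The theorem then follows from the infinite-dimensional Girsanov theorem for cylindrical Wiener processes.

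To obtain this representation of $D^h$, I would proceed by a double approximation following the strategy sketched in the introduction. First, replace $A$ by its Yosida approximation $A_n$ and let $X_n$ denote the strong solution of \eqref{eq: def_Xn_yosida_approx}, so that by \eqref{eq: yosida_Xn_approx} we have $X_n\to X$ in $L^p(\Om;C([0,T];H))$. Second, invoke the $\pi$-core property in Lemma \ref{lem: pi_core} to approximate $h$ by a sequence $(h_k)_k \subset \calERH$ with $\pilim_k h_k = h$, $\pilim_k L_0 h_k = L h$, and the crucial additional convergence $\pilim_k \Df_x h_k = \Df_x h$ supplied by \eqref{eq: picore_Dxapprox} under Assumption \ref{ass: hgoodfunction}(ii). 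For each fixed pair $(n,k)$, $X_n$ is a genuine It\^o process (since $A_n$ is bounded) and $h_k$ is smooth with $\Df_x h_k(t,x)$ taking values in $\dom(A^*)$, so the classical It\^o formula yields
\[
h_k(t,X_n(t)) = h_k(0,x_0) + \int_0^t L_0^{(n)} h_k(s,X_n(s))\,\df s + \int_0^t \langle \sqrt{Q}\,\Df_x h_k(s,X_n(s)),\df W(s)\rangle,
\]
where $L_0^{(n)}$ denotes the Kolmogorov operator in which $A$ has been replaced by $A_n$. Passing first $n\to\infty$ (using the $L^p$-convergence of $X_n$, the bound \eqref{eq: yosida_approx_Sn_bound}, and It\^o's isometry; the unbounded drift term converges because $\langle A_n x,\Df_x h_k\rangle = \langle x,A_n^*\Df_x h_k\rangle \to \langle x,A^*\Df_x h_k\rangle$) and then $k\to\infty$ (using pointwise $\pi$-convergence combined with the polynomial-growth bound \eqref{eq: expectation_bound_mild_solution} to invoke dominated convergence in the drift and It\^o's isometry in the stochastic integral) identifies the limits as the two sides of the claimed identity for $D^h$.

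Once this representation is in hand, set $U(t)=h(t,X(t))/h(0,x_0)$ and $V(t)=\exp\left(-\int_0^t (Lh/h)(s,X(s))\,\df s\right)$, and note that $V$ has finite variation. The semimartingale product rule then gives
\[
\df E^h(t) = V(t)\,\df D^h(t) = E^h(t)\,\langle \sqrt{Q}\, \Df_x \log h(t,X(t)), \df W(t)\rangle,
\]
the two finite-variation contributions $\pm V(t)Lh(t,X(t))\,\df t$ cancelling exactly; hence $E^h$ is the Dol\'eans-Dade exponential of $M$. Because $E^h$ is a true $\P$-martingale by Assumption \ref{ass: hgoodfunction}(iii), the infinite-dimensional Girsanov theorem identifies $W^h$ as a cylindrical Wiener process under $\P^h$, and substituting $\df W = \df W^h + \sqrt{Q}\,\Df_x \log h(s,X(s))\,\df s$ into the mild formulation \eqref{eq: mild_sol} produces exactly the extra drift $Q\,\Df_x \log h$ (via $\sqrt{Q}\cdot\sqrt{Q}=Q$) and the new SPDE.

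The principal obstacle will be the rigorous passage to the limit in the stochastic integral $\int_0^t \langle \sqrt{Q}\,\Df_x h_k(s,X_n(s)),\df W(s)\rangle$: polynomial growth of $\Df_x h_k$ uniformly in $k$ (guaranteed by the boundedness clause of $\pi$-convergence) together with \eqref{eq: expectation_bound_mild_solution} supplies a dominating function, but one has to combine this with only pointwise convergence of the integrands to secure $L^2(\df\P\otimes\df s)$-convergence — essentially a Vitali-type argument — so that It\^o's isometry correctly identifies the limiting stochastic integral.
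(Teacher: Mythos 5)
Your proposal is correct and follows essentially the same route as the paper: Yosida approximation of $A$ combined with approximation of $h$ by exponential test functions via the $\pi$-core property (including the derivative convergence \eqref{eq: picore_Dxapprox}) to obtain the stochastic-integral representation of $D^h$, then integration by parts to identify $E^h$ as the Dol\'eans-Dade exponential of $M^h$, and finally Girsanov. The paper organizes the two approximations as separate steps (first proving the identity for fixed $h\in\calERH$ with $X_n\to X$, then letting $h_k\to h$ along the actual solution $X$), but the limits are taken in the same order and with the same dominated-convergence/It\^o-isometry arguments you describe.
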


The main part of the proof of Theorem \ref{thm: weaksolution} lies in the following lemma. 
\begin{lemma}
\label{lem: hDynkinMartingaleItoProcess}
    Let $h$ satisfy Assumption \ref{ass: hgoodfunction} (i) and (ii) and let $D^h$ be the corresponding Dynkin martingale.
    Then 
    \begin{align}
    \label{eq: D^h_is_Ito}
        D^h(t) = h(0,x_0) + \int_0^t \bigl \langle \sqrt{Q} \Df_x h(s,X(s)), \df W(s) \bigr \rangle,  \quad t \geq 0.
    \end{align}
    Furthermore, let $E^h$ be the process as defined in \eqref{eq: E^h(t)}. Then $E^h$ is the stochastic exponential given by
    \begin{align}
    \label{eq: E^h_is_ExpMartingale}
        E^h(t) = \exp \left(M^h(t) - \frac{1}{2} [M^h]_t \right), \quad t \geq 0,
    \end{align} where $M^h(t) = \int_0^t \langle \sqrt{Q} \Df_x \log h(s,X(s)), \df W(s) \rangle$.
\end{lemma}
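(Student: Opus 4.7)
The plan is a double approximation: first approximate the mild solution $X$ by the strong Yosida solutions $X_N$ from \eqref{eq: def_Xn_yosida_approx} (since Itô's formula requires a strong solution), and second approximate $h$ by exponential test functions $h_n \in \calERH$ as produced by Lemma \ref{lem: pi_core} (since $h$ itself is only known to lie in $\dom_m(L)$ and in particular $\Df_x^2 h$ need not exist, so Itô's formula cannot be applied to $h$ directly). By Assumption \ref{ass: hgoodfunction}(ii) together with the last assertion of Lemma \ref{lem: pi_core}, I can fix a sequence $(h_n)_n \subset \calERH$ with
\begin{align*}
\pilim_n h_n = h, \quad \pilim_n L_0 h_n = L h, \quad \pilim_n \Df_x h_n = \Df_x h.
\end{align*}

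For each $h_n$ and each $N > \omega_S$, the function $h_n$ is smooth in $(t,x)$ with $\Df_x h_n(t,x) \in \dom(A^*)$ and $X_N$ is a strong solution, so the standard infinite-dimensional Itô formula yields
\begin{align*}
h_n(t, X_N(t)) = h_n(0, x_0) + \int_0^t L_0^{(N)} h_n(s, X_N(s))\, \df s + \int_0^t \bigl\langle \sqrt{Q}\,\Df_x h_n(s, X_N(s)),\, \df W(s) \bigr\rangle,
\end{align*}
where $L_0^{(N)}$ is the Kolmogorov operator obtained from $L_0$ by replacing $A$ with the bounded operator $A_N$. Using \eqref{eq: yosida_Xn_approx}, the pointwise convergence $A_N^* a \to A^* a$ for $a \in \dom(A^*)$ (which propagates through the explicit trigonometric form of $h_n$), the uniform growth bound \eqref{eq: yosida_approx_Sn_bound}, and continuity of the ingredients, I can pass $N \to \infty$ to obtain the same identity with $X_N$ replaced by $X$ and $L_0^{(N)} h_n$ replaced by $L_0 h_n = L h_n$.

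Next, I pass $n \to \infty$. The left-hand side converges pointwise since $h_n \to h$ and $X(t)$ is fixed. For the drift, dominated convergence together with $\sup_n \|L_0 h_n\|_m < \infty$ (which is part of $\pi$-convergence) and the moment bound \eqref{eq: expectation_bound_mild_solution} gives $\int_0^t L_0 h_n(s, X(s))\, \df s \to \int_0^t L h(s, X(s))\, \df s$ in probability. For the stochastic integral, the Itô isometry reduces the task to showing that $\E \int_0^t \|\sqrt{Q}(\Df_x h_n - \Df_x h)(s, X(s))\|_H^2 \, \df s \to 0$, which follows from the pointwise convergence $\Df_x h_n \to \Df_x h$, the uniform bound $\sup_n \|\Df_x h_n\|_m < \infty$, boundedness of $\sqrt{Q}$ on the relevant subspace, and again \eqref{eq: expectation_bound_mild_solution}. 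Combining these limits with the definition of $D^h$ yields \eqref{eq: D^h_is_Ito}. For \eqref{eq: E^h_is_ExpMartingale}, set $V(t) = \exp(-\int_0^t (Lh/h)(s, X(s))\, \df s)$, so that $h(0,x_0) E^h(t) = h(t, X(t)) V(t)$. Since $V$ is a continuous process of finite variation and $h(t, X(t)) = h(0,x_0) + D^h(t) + \int_0^t Lh(s, X(s))\, \df s$ is by the first part a semimartingale with martingale part $D^h$, the integration-by-parts formula gives
\begin{align*}
h(t, X(t)) V(t) = h(0, x_0) + \int_0^t V(s)\, \df D^h(s) + \int_0^t V(s) \bigl(L h(s, X(s)) - h(s, X(s)) (Lh/h)(s, X(s))\bigr)\, \df s,
\end{align*}
and the last integrand vanishes identically. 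Dividing by $h(0,x_0)$ and using $V(t)/h(0,x_0) = E^h(t)/h(t, X(t))$ identifies $E^h$ as the solution of $\df E^h(t) = E^h(t)\, \df M^h(t)$, which is the stochastic exponential in \eqref{eq: E^h_is_ExpMartingale}.

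The main obstacle will be the joint passage of the two limits in a rigorous way, in particular handling the unbounded term $\langle A_N x, \Df_x h_n(s, X_N(s))\rangle$ during the Yosida step — where it is essential that $\Df_x h_n \in \dom(A^*)$ so this term can be rewritten via $A_N^*$ whose convergence to $A^*$ is available — and subsequently controlling the stochastic integral as $n \to \infty$ via the Itô isometry while relying only on the polynomial growth bounds afforded by $\pi$-convergence rather than any uniform bound on $\Df_x h_n$.
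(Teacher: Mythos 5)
Your proposal is correct and follows essentially the same route as the paper: Itô's formula applied to $h_n(t,X_N(t))$ for exponential test functions $h_n$ along the Yosida approximations $X_N$, passage $N\to\infty$ using $A_N^*a\to A^*a$ for $a\in\dom(A^*)$, then $n\to\infty$ via the $\pi$-core property of Lemma \ref{lem: pi_core} with dominated convergence and the Itô isometry, and finally integration by parts against the finite-variation factor to identify $E^h$ as the stochastic exponential. The only cosmetic difference is that the paper organizes the two limits as separate steps (first establishing \eqref{eq: D^h_is_Ito} for all of $\calERH$, then approximating $h$), whereas you interleave them, which changes nothing of substance.
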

\begin{proof}
\textit{Step One.}
We begin by showing the claim in Equation \eqref{eq: D^h_is_Ito} for any exponential test function $h \in \calERH$. 

Let $(A_n)_{n > \om_S}$ be the Yosida approximation of $A$ such that $\lim_n A_n x = A x$ for all $x \in \dom(A)$ and let $X_n$ be the sequence of strong solutions to equation \eqref{eq: def_Xn_yosida_approx}.
Define the sequence of processes $(D_n^h)_n$ via
\begin{align}
\label{eq: sodfj89243}
    D_n^h(t) = h(t,X_n(t)) - \int_0^t Lh(s,X_n(s)) \, \df s.
\end{align}
We first show that $D_n^h(t)\xrightarrow{\P} D^h(t)$ for any $t \geq 0$.
It follows from \eqref{eq: yosida_Xn_approx} that $h(t,X_n(t)) \xrightarrow{\P}  h(t,X(t))$ and $Lh(t,X_n(t)) \xrightarrow{\P}  Lh(t,X(t))$.
Furthermore, it holds that
\begin{align}
\begin{split}
\label{eq: adfj89213}
   \sup_n \E \left[ \left|  \int_0^t Lh(s,X_n(s))\,  \df s \right| \right] &\leq \sup_n \E \left[   \int_0^t  \frac{ \left| Lh(s,X_n(s)) \right|}{1+\|X_n(s)\|^m}  (1+\|X_n(s)\|^m)   \, \df s \right] \\
    &\leq \|L h\|_{m} \sup_n   \int_0^t  (1+ \E[\|X_n(s)\|^m ])  \, \df s  \\
    &< \infty.
\end{split}
\end{align}
Here, we used in the last step that, by the bounds in \eqref{eq: expectation_bound_mild_solution} and \eqref{eq: yosida_approx_Sn_bound}, there exists for any $m \geq 1$ some $\tilde{C}_m > 0$ and $\tilde{\gamma}_m \geq 0$ such that 
\begin{align}
\label{eq: yosida_approx_Xn_uniform_expectation_bound}
    \sup_n \E[\|X_n(s)\|^m ] \leq \tilde{C}_m \exp( \tilde{\gamma}_m s).
\end{align}
By the dominated convergence theorem it follows from \eqref{eq: adfj89213} that 
\begin{align*}
    \int_0^t Lh(s,X_n(s)) \, \df s \xrightarrow{L^1(\P)}  \int_0^t Lh(s,X(s))\,  \df s
\end{align*} and therefore in total we get that $D_n^h(t)\xrightarrow{\P} D^h(t)$. 

On the other hand, for any $n > \om_S$, an application of Itô's lemma gives that
\begin{align*}
    \df h(t,X_n(t)) = L_n h(t,X_n(t)) \df t + \langle \sqrt{Q} \Df_x h(t,X_n(t)), \df W(t) \rangle,
\end{align*}
where $L_n h(t,x) = \partial_t h(t,x) + \langle x, A_n^* \Df_x h(t,x) \rangle + \langle F(x), \Df_x h(t,x) \rangle + \frac{1}{2} \tr[Q \Df_x^2 h(t,x)]$ is the Kolmogorov operator associated with the approximating SPDE in \eqref{eq: def_Xn_yosida_approx}.
Plugging this into \eqref{eq: sodfj89243} gives that
\begin{align}
\label{eq: ajp293923}
\begin{split}
    D_n^h(t) &= h(0,x_0) + \int_0^t \bigl \langle \sqrt{Q} \Df_x h(s,X_n(s)), \df W(s) \bigr \rangle + \int_0^t \left[ L_n h(s,X_n(s)) - L h(s,X_n(s)) \right]\,  \df s \\
    &= h(0,x_0) + \int_0^t \bigl \langle \sqrt{Q} \Df_x h(s,X_n(s)), \df W(s) \bigr \rangle +  \int_0^t  \bigl \langle X_n(s), \left(A_n^* - A^*\right) \Df_x h(s,X_n(s)) \bigr \rangle \,  \df s.
\end{split}
\end{align}

It remains to show that 
\begin{align}
\label{eq: adf09u243}
    \lim_n \int_0^t \bigl \langle \sqrt{Q} \Df_x h(s,X_n(s)), \df W(s) \bigr \rangle = \int_0^t \bigl \langle \sqrt{Q} \Df_x h(s,X(s)), \df W(s) \bigr\rangle
\end{align}
and 
\begin{align}
\label{eq: ajf8903}
    \lim_n  \int_0^t  \bigl \langle X_n(s), \left(A_n^* - A^*\right) \Df_x h(s,X_n(s)) \bigr\rangle \, \df s = 0
\end{align}
in probability. Then \eqref{eq: D^h_is_Ito} follows from $D_n^h(t)\xrightarrow{\P} D^h(t)$ and \eqref{eq: ajp293923}-\eqref{eq: ajf8903}.

To show \eqref{eq: adf09u243}, note that by the Itô isometry and \eqref{eq: yosida_approx_Xn_uniform_expectation_bound} we have
\begin{align*}
    \sup_n \E \left[ \left\Vert  \int_0^t \bigl \langle \sqrt{Q} \Df_x h(s,X_n(s)), \df W(s) \bigr \rangle \right\Vert^2 \right] &= \sup_n \int_0^t \E \left[ \| \sqrt{Q} \Df_x h(s,X_n(s)) \|^2 \,  \df s \right] \\ &\leq \sup_n \|Q\| \|\Df_x h\|^2_{m} \int_0^t  \E[ ( 1 + \|X_n(s)\|^{m})^2] \, \df s \\
    &< \infty.
\end{align*}
Since $\Df_x h(s,X_n(s)) \xrightarrow{\P} \Df_x h(s,X(s))$, we get \eqref{eq: adf09u243} by an application of the dominated convergence theorem.

Lastly, for any $h \in \calERH$, it holds that $\Df_x h(s,x) = \sum_{i=1}^k z_i g_i(s,x)$ for some $z_i \in \dom(A^*)$ and $g_i \in \calERH$, $i = 1,...k.$ 
We show \eqref{eq: ajf8903} for $k=1$. The case that $k > 1$ follows from linearity. 
We have
\begin{align*}
    \int_0^t  \bigl \langle X_n(s), \left(A_n^* - A^*\right) \Df_x h(s,X_n(s)) \bigr \rangle \, \df s &= \int_0^t  \bigl \langle X_n(s), \left(A_n^* - A^*\right) z_1 \bigr \rangle g_1(s,X_n(s))\,  \df s.
\end{align*}
By \eqref{eq: yosida_Xn_approx} it follows that $X_n \xrightarrow{\P} X$ in $C([0,t];H)$ and in particular $\sup_n \sup_{s \in [0,t]} \|X_n(s)\| < \infty$ a.s.
Thus \eqref{eq: ajf8903} follows from $\lim_n \left(A_n^* - A^*\right) z_1 = 0$. In total this concludes that \eqref{eq: D^h_is_Ito} holds for any $h \in \calERH.$

\textit{Step Two.}
We proceed to show \eqref{eq: D^h_is_Ito} for any $h \in \dom_m(L)$ that satisfies Assumptions \ref{ass: hgoodfunction} (i) and (ii). By Lemma \ref{lem: pi_core} there exists a sequence $(h_n)_n \in \calERH$ such that 
\begin{align}
\label{eq: ansdf8921}
    \pilim_n h_n = h, ~\pilim_n \Df_x h_n = \Df_x h \text{ and } \pilim_n L h_n = L h. 
\end{align}
Denote by $D^{h_n}(t)$ the Dynkin martingale of $(h_n, L h_n)$.
Firstly, it follows by dominated convergence, the bound in \eqref{eq: expectation_bound_mild_solution} and the $\pi$-convergence in \eqref{eq: ansdf8921}
that
\begin{align}
\begin{split}
\label{eq: a902834f}
    D^{h_n}(t) &= h_n(t,X(t)) - \int_0^t L h_n(s,X(s)) \, \df s \\
    &\xrightarrow[]{\P}  h(t,X(t)) - \int_0^t L h(s,X(s))\,  \df s \\
    &= D^h(t).
\end{split}
\end{align}

On the other hand, since $(h_n)_n \subset \calERH$, it follows from Step One that
\begin{align}
    D^{h_n}(t) &= h_n(0,x_0) + \int_0^t \bigl \langle \sqrt{Q} \Df_x h_n(s,X(s)), \df W(s) \bigr \rangle, \quad n \in \N.
\end{align}
We already have that $\lim_n h_n(0,x_0) = h(0,x_0)$. It thus remains to show that
\begin{align}
\label{eq: ap90123nbf}
     \int_0^t \bigl \langle \sqrt{Q} \Df_x h_n(s,X(s)), \df W(s) \bigr \rangle &\xrightarrow[]{\P} \int_0^t \langle \sqrt{Q} \Df_x h(s,X(s)), \df W(s) \rangle.
\end{align}
 By Itô isometry it suffices to establish that
\begin{align*}
     \lim_{n \to \infty} \E \left[ \int_0^t \|\sqrt{Q} \Df_x h_n(s,X(s))\|^2 \, \df s \right] =  \E \left[ \int_0^t \|\sqrt{Q} \Df_x h(s,X(s))\|^2 \, \df s \right],
\end{align*}
but this again follows from an application of the dominated convergence theorem, the $\pi$-convergence of $\pilim_n \Df_x h_n = \Df_x h$ and the bound in \eqref{eq: expectation_bound_mild_solution}. In total, the claim \eqref{eq: D^h_is_Ito} thus follows from \eqref{eq: a902834f} - \eqref{eq: ap90123nbf}.

\textit{Step Three.}
We finish the proof by showing the claim in \eqref{eq: E^h_is_ExpMartingale}. 
First note that by Assumption \ref{ass: hgoodfunction} (i) and the bound in \eqref{eq: expectation_bound_mild_solution}, the process
\begin{align}
\label{eq: aj2p8934p9}
    t \mapsto \int_0^t \dfrac{L h}{h}(s,X(s)) \, \df s 
\end{align}
is finite $\P$-a.s. and the process $E^h$ in \eqref{eq: E^h(t)} is therefore well-defined with $\E \left[ E^h(0) \right]  =1 $. Furthermore, the mapping in \eqref{eq: aj2p8934p9} is continuous and of finite variation and thus (see e.g. \cite{PalmowskiRoski2002technique}, Lemma 3.1)
\begin{align}
    \left[ h(t,X(t)), \exp \left(- \int_0^t \frac{L h}{h}(s,X(s))\,  \df s \right) \right]_t = 0.
\end{align}
By Lemma \ref{lem: dynkins_formula} the process $h(t,X(t))$ is a semimartingale and with the integration by parts formula for semimartingales it follows that 
\begin{align}
\begin{split}
\label{eq: ap92384}
    \df E^h(t) &= \frac{1}{h(0,x_0)} \left[  h(t,X(t)) \df \left( \exp \left(- \int_0^t \frac{L h}{h}(s,X(s)) \, \df s \right) \right)+ \exp \left(- \int_0^t \frac{L h}{h}(s,X(s))\,  \df s \right) \df h(t,X(t)) \right] \\ 
    &= \frac{1}{h(0,x_0)} \left[-  \exp \left(- \int_0^t \frac{L h}{h}(s,X(s)) \, \df s \right)  Lh(t,X(t)) \df t + \exp \left(- \int_0^t \frac{L h}{h}(s,X(s)) \, \df s \right) \df h(t,X(t)) \right] \\
    &= \frac{1}{h(0,x_0)} \exp \left(- \int_0^t \frac{L h}{h}(s,X(s)) \, \df s \right) \df D^h(t),
\end{split}
\end{align}
where $D^h(t) = h(t,X(t)) - \int_0^t Lh(s,X(s)) \, \df s$ is the Dynkin martingale of $(h, Lh)$. 
From step two it follows that $\df D^h(t) = \langle \sqrt{Q} \Df_x h(t,X(t)), \df W(t) \rangle$ and plugging this into \eqref{eq: ap92384} gives
\begin{align}
\begin{split}
    \df E^h(t) &=  \frac{1}{h(0,x_0)} \exp \left(- \int_0^t \frac{L h}{h}(s,X(s)) \, \df s \right) \langle \sqrt{Q} \Df_x h(t,X(t)), \df W(t) \rangle  \\
    &= E^h(t) \langle \sqrt{Q} \Df_x \log h(t,X(t)), \df W(t) \rangle.
\end{split}
\end{align}
In particular, the process $E^h(t)$ is the stochastic exponential $E^h(t) = \exp \left( M^h(t) - \frac{1}{2}[M^h]_t \right)$ of the Itô process 
\begin{align}
    M^h(t) = \int_0^t \bigl \langle \sqrt{Q} \Df_x \log h(s,X(s)), \df W(s) \bigr \rangle.
\end{align}
\end{proof}

We are now ready to give the remainder of the proof of Theorem \ref{thm: weaksolution}, which is essentially just an application of Girsanov's theorem. 
\begin{proof}[Proof of Theorem \ref{thm: weaksolution}.]
Let $E^h$ be the process as defined in \eqref{eq: E^h(t)}. 
By Assumption \ref{ass: hgoodfunction} (iii), $E^h$ is a $\P$-martingale and thus, following Lemma \ref{lem: ExpChangeMeasure}, defines a measure $\P^h$ on $\calF$ such that $\df \P^h_T = E^h(T) \df \P_T$ on $\calF_T$ for any $T > 0$.
On the other hand, Lemma \ref{lem: hDynkinMartingaleItoProcess} shows that $E^h$ is the stochastic exponential of the Itô process 
\begin{align}
    M^h(t) = \int_0^t \bigl \langle \sqrt{Q} \Df_x \log h(s,X(s)), \df W(s) \bigr \rangle.
\end{align}

It therefore follows from Girsanov's theorem that
\begin{align}
        W^h(t) = W(t) - \int_0^t \sqrt{Q} \Df_x \log h(s,X(s)) \, \df s, \quad t \in [0,T],
\end{align}
is a cylindrical Wiener process with respect to the measure $\P^h$. In particular, under $\P^h$, the process $X$ given in $\eqref{eq: mild_sol}$ is a mild solution to the SPDE
\begin{align*}
 \df X(t) = \left[ A X(t) + F(t,X(t)) + Q \Df_x \log h(t,X(t)) \right] \df t + \sqrt{Q} \df W^h(t), \quad t \in [0,T].
\end{align*}

\end{proof}

\subsection{The change of measure on a finite time interval}

In certain applications we work with $h$-functions that are only defined on the half open interval $[0,T)$ for some $T > 0$. 
We then replace Assumption \ref{ass: hgoodfunction} with the following.

\begin{assumption}
\label{ass: hTgoodfunction}
The function $h: [0,T) \times H \longrightarrow \R_+$ satisfies for any $S < T$:
\begin{itemize}
\item[(i)] $h \in C_m([0,S] \times H)$ such that $L h(t,x)$ exists for any $[0,S] \times H$ and $h^{-1} Lh \in C_m([0,S] \times H)$.
\item[(ii)] $h$ is Fréchet differentiable in $x$ such that $\Df_x h \in C_m([0,S] \times H;H)$. 
\item[(iii)] $h$ is such that $(E^h(t))_{t \in [0,S]}$ is a $\P$-martingale.
\end{itemize}
\end{assumption}

Under Assumption \ref{ass: hTgoodfunction} (iii), $(E^h(t))_{t \in [0,T)}$ is a non-negative martingale with $\E[E(0)] = 1$. Following Lemma \ref{lem: ExpChangeMeasureFinite}, it thus defines a measure $\P^h$ on $\calF_T$ such that 
\begin{align}
\label{eq: FinTimechangeofmeasure}
    \dfrac{\df \P_t^{h}}{\df \P_t} =  E^h(t), \quad t \in [0,T).
\end{align}

We get the following version of Theorem \ref{thm: weaksolution}.
\begin{theorem}
\label{thm: FinTimeWeakSol}
    Let $h$ satisfy Assumption \ref{ass: hTgoodfunction} and let $\P^h$ be the measure defined on $\calF_T$ by \eqref{eq: FinTimechangeofmeasure}. Then for any $S < T$, the process 
    \begin{align}
    \label{eq: FinTimeW^h}
    W^h(t) = W(t) - \int_0^t \sqrt{Q} \Df_x \log h(s,X(s)) \,  \df s, \quad t \in [0,S],
    \end{align}
    is a cylindrical Wiener process with respect to $\P^h$.
    In particular, $X$ under $\P^h$ is a mild solution to the SPDE
    \begin{align}
        \df X(t) = \left[ A X(t) + F(t,X(t)) + Q \Df_x \log h(t,X(t)) \right] \df t + \sqrt{Q} \df W^h(t), \quad t \in [0,T).
    \end{align}
\end{theorem}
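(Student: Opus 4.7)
The plan is to mirror the proof of Theorem \ref{thm: weaksolution}, but to carry out all arguments on the finite closed interval $[0,S]$ for an arbitrary $S<T$, and then to appeal to the arbitrariness of $S$. The existence and uniqueness of the measure $\P^h$ on $\calF_T$ with the stated Radon--Nikodym derivatives is already supplied by Assumption \ref{ass: hTgoodfunction} (iii) together with Lemma \ref{lem: ExpChangeMeasureFinite}, so the substance of the proof is a Girsanov-type identification of $W^h$ under $\P^h$ on each interval $[0,S]$.

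First I would establish the finite-time analog of Lemma \ref{lem: hDynkinMartingaleItoProcess}: for each $S<T$ and $t \in [0,S]$,
\begin{align*}
D^h(t) = h(0,x_0) + \int_0^t \bigl\langle \sqrt{Q}\Df_x h(s,X(s)), \df W(s) \bigr\rangle,
\end{align*}
and $E^h(t)$ coincides with the stochastic exponential of $M^h(t) = \int_0^t \langle \sqrt{Q}\Df_x \log h(s,X(s)), \df W(s)\rangle$ on $[0,S]$. The three-step argument of Lemma \ref{lem: hDynkinMartingaleItoProcess} carries over when $\R_+$ is replaced by $[0,S]$: Step One, Itô's formula applied to the Yosida strong solutions $X_n$ together with the convergence \eqref{eq: yosida_Xn_approx}, is insensitive to the time horizon; Step Three, the integration by parts yielding the stochastic exponential form of $E^h$, is purely local in time and uses only Assumption \ref{ass: hTgoodfunction} (i)--(ii); and all the moment estimates invoked there depend only on the uniform bound \eqref{eq: expectation_bound_mild_solution} integrated over a bounded time window, which is finite for any $S<T$.

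Once this analog is in place, the conclusion of Theorem \ref{thm: FinTimeWeakSol} follows exactly as in the proof of Theorem \ref{thm: weaksolution}: Girsanov's theorem applied to the $\P$-martingale $(E^h(t))_{t\in[0,S]}$ shows that the process $W^h$ defined in \eqref{eq: FinTimeW^h} is a cylindrical $\P^h$-Wiener process on $[0,S]$, and substituting this identity into the mild formulation \eqref{eq: mild_sol} of $X$ produces the additional drift $Q\Df_x \log h(t,X(t))$ in the mild equation for $X$ on $[0,S]$. Since $S\in[0,T)$ was arbitrary, the result extends to all of $[0,T)$.

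The hard part is Step Two of Lemma \ref{lem: hDynkinMartingaleItoProcess}, namely the $\pi$-core approximation by exponential test functions. Assumption \ref{ass: hTgoodfunction} does not assert that $h \in \dom_m(L)$ in the full sense of Definition \ref{def: inf_generator}, so Lemma \ref{lem: pi_core} cannot be invoked directly. The natural remedy is to construct, for each $S'\in(S,T)$, an extension $\tilde{h}$ of $h|_{[0,S']\times H}$ to $\R_+\times H$ satisfying Assumption \ref{ass: hgoodfunction} (for instance by a smooth time-cutoff past $S'$ combined with a controlled decay in $t$, relying on the positivity, Fréchet differentiability and polynomial growth of $h$, $\Df_x h$ and $h^{-1}Lh$ on the closed interval $[0,S']\times H$). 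Lemma \ref{lem: pi_core} applied to $\tilde h$ then yields an approximating sequence $(h_n)_n\subset \calERH$ whose restrictions to $[0,S]\times H$ furnish the $\pi$-convergence required in the finite-time version of Step Two; since $E^{\tilde h}$ and $E^h$ agree on $[0,S]$, the conclusion transfers to the original $h$. Alternatively, a direct inspection of the proof of Lemma \ref{lem: pi_core} in Appendix \ref{app: B} shows that the construction of the approximating exponential test functions only uses pointwise information of $h$, $\Df_x h$ and $Lh$ on $[0,S]\times H$ and therefore adapts without change to functions defined on a bounded time window.
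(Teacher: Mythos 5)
Your proposal follows essentially the same route as the paper: the paper also reduces to the infinite-horizon result by extending $h$ to $\R_+\times H$ (it uses the simple freeze $\bar h(t,x)=h(S,x)$ for $t>S$ and asserts that $\bar h$ satisfies Assumption \ref{ass: hgoodfunction}), then invokes Lemma \ref{lem: hDynkinMartingaleItoProcess} and Girsanov's theorem exactly as you do. If anything, you are more explicit than the paper about the one delicate point, namely that the extension must land in $\dom_m(L)$ so that the $\pi$-core approximation of Step Two applies, which the paper passes over without comment.
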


\begin{proof}
    Let $S < T$ be arbitrary but fixed. Let $\bar{h}$ be an extension of $h$ onto $\R_+ \times H$ defined via
    \begin{align*}
    \bar{h}(t,x) = \begin{cases}
        h(t,x), &\quad t \leq S, \\
        h(S,x), &\quad t > S.
    \end{cases}
    \end{align*}
    Then $\bar{h}$ satisfies Assumption \ref{ass: hgoodfunction}. Following the proof of Lemma \ref{lem: hDynkinMartingaleItoProcess}, we get that
    \begin{align*}
        E^h(t) = \exp \left(M^h(t) - \frac{1}{2} [M^h]_t\right), \quad t \in [0,S],
    \end{align*}
    where $M^h(t) = \int_0^t \langle \sqrt{Q} \Df_x \log h(s,X(s)), \df W(s) \rangle$.
    The claim then follows from Assumption \ref{ass: hTgoodfunction} (iii) and an application of Girsanov's theorem. 
\end{proof}

Typically, the martingale property of $E^h$ in Assumption \ref{ass: hTgoodfunction} (iii) is the most difficult of the three to verify. The following lemma summarizes conditions under which it is satisfied. 
\begin{lemma}
\label{lem: conditions_E^h_martingale}
Either of the following conditions are sufficient for $(E^h(t))_{t \in [0,S]}$ to be a martingale:
\begin{itemize}
    \item[1.] In addition to Assumption \ref{ass: hTgoodfunction} (i), $h^{-1} Lh  \in C_0([0,S] \times H)$ is bounded.
    \item[2.] In addition to Assumption \ref{ass: hTgoodfunction} (i) and (ii), it holds that
    \begin{align*}
        \E \left[ \exp \left( \frac{1}{2} \int_0^S \| \sqrt{Q} \Df_x \log h(t,X(t)) \|^2 \df t \right) \right] < \infty.
    \end{align*}
    \item[3.] In addition to Assumption \ref{ass: hTgoodfunction} (i) and (ii), the mapping $(t,x) \mapsto \sqrt{Q} \Df_x \log h(t,x)$ is Lipschitz in $x$, uniformly in $t \in [0,S]$.
\end{itemize}

\end{lemma}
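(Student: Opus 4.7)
The plan is to address the three conditions separately. In each case $E^h$ is already a non-negative continuous local $\P$-martingale with $\E[E^h(0)]=1$ (hence a supermartingale), so it suffices to prove $\E[E^h(t)]=1$ for every $t\in[0,S]$.

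For condition 1, Assumption \ref{ass: hTgoodfunction} (ii) is not available and the stochastic-exponential representation from Lemma \ref{lem: hDynkinMartingaleItoProcess} cannot be invoked directly. Instead, I would work from the defining identity \eqref{eq: E^h(t)}: with $\alpha:=\|h^{-1}Lh\|_\infty$, the exponential factor is pathwise bounded by $e^{\alpha S}$ on $[0,S]$, and $h\in C_m(\R_+\times H)$ yields
\begin{align*}
E^h(t)\;\le\;\frac{e^{\alpha S}\|h\|_m}{h(0,x_0)}\,(1+\|X(t)\|^m),\qquad t\in[0,S].
\end{align*}
Standard maximal estimates for the mild solution (a routine consequence of \eqref{eq: expectation_bound_mild_solution} together with Doob's inequality applied to the Gaussian stochastic convolution) give $\E[\sup_{t\in[0,S]}\|X(t)\|^m]<\infty$, so $\sup_{t\in[0,S]}E^h(t)$ is dominated by an integrable random variable. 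For any localizing sequence $\tau_n$ of $E^h$, the family $\{E^h(t\wedge\tau_n)\}_n$ is then uniformly integrable, and passing to the limit in $\E[E^h(t\wedge\tau_n)]=1$ yields $\E[E^h(t)]=1$.

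For condition 2, Lemma \ref{lem: hDynkinMartingaleItoProcess} identifies $E^h$ with the stochastic exponential of the real-valued Itô process $M^h$, whose quadratic variation at time $S$ is precisely $\int_0^S\|\sqrt{Q}\,\Df_x\log h(s,X(s))\|^2\,\df s$. The hypothesis is therefore literally Novikov's condition, and the conclusion is immediate from the classical Novikov theorem applied to the real-valued continuous local martingale $M^h$.

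For condition 3, the Lipschitz hypothesis combined with continuity of $(t,x)\mapsto\sqrt{Q}\,\Df_x\log h(t,x)$ on $[0,S]\times H$ (inherited from $\Df_x h\in C_m(\R_+\times H;H)$ and positivity of $h$) gives the uniform linear-growth bound $\|\sqrt{Q}\,\Df_x\log h(t,x)\|^2\le C(1+\|x\|^2)$. The plan here is to reduce to condition 2 piecewise: partition $[0,S]$ into sub-intervals $[t_i,t_{i+1}]$ of small length $\delta$ and verify on each the conditional Novikov bound
\begin{align*}
\E\!\left[\exp\!\left(\tfrac{C}{2}\int_{t_i}^{t_{i+1}}(1+\|X(s)\|^2)\,\df s\right)\,\Big|\,\calF_{t_i}\right]<\infty\quad\P\text{-a.s.,}
\end{align*}
via a Fernique-type estimate on the Gaussian stochastic convolution (using $\sup_t\tr(Q_t)<\infty$) coupled with a Gronwall argument on the mild-solution formula \eqref{eq: mild_sol} and the linear growth of $F$. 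The resulting identities $\E[E^h(t_{i+1})/E^h(t_i)\mid\calF_{t_i}]=1$ then chain via the tower property to $\E[E^h(t)]=1$ on all of $[0,S]$. This last step is the main obstacle: while conditions 1 and 2 are fairly direct, under condition 3 one must carefully control exponential moments of the mild solution uniformly on short sub-intervals and in particular for $\calF_{t_i}$-measurable random initial conditions.
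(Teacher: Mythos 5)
Your plan is correct in substance, and for conditions 1 and 2 it matches or harmlessly refines what the paper does: condition 2 is, in both treatments, just Novikov applied to the real-valued local martingale $M^h$ identified in Lemma \ref{lem: hDynkinMartingaleItoProcess}; for condition 1 the paper simply cites \cite{PalmowskiRoski2002technique}, Proposition 3.2, whereas you give a self-contained domination argument --- bounding $E^h(t)$ by $\tfrac{e^{\alpha S}\|h\|_m}{h(0,x_0)}(1+\|X(t)\|^m)$ and invoking $\E[\sup_{t\le S}\|X(t)\|^m]<\infty$ --- which is valid, though note that the paper only records the fixed-time bound \eqref{eq: expectation_bound_mild_solution}, so the maximal moment bound is an extra (standard) ingredient you must supply. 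The genuine divergence is condition 3. You reduce to Novikov on short subintervals, which forces you to prove Gaussian-type exponential moments $\E[\exp(\epsilon\sup_{t\le S}\|X(t)\|^2)]<\infty$ via Fernique for the stochastic convolution plus Gronwall, and then chain the identities $\E[E^h(t_{i+1})/E^h(t_i)\mid\calF_{t_i}]=1$; this works (it is the Karatzas--Shreve piecewise-Novikov scheme, and Fernique is available here because $\sup_t\tr(Q_t)<\infty$ makes the convolution a Gaussian element of $C([0,S];H)$), but it is the heavier route. The paper's Appendix \ref{app: C} instead uses the classical localization--Girsanov (Bene\v{s}-type) argument: stop when $\int_0^t\|\sqrt{Q}\Df_x\log h(s,X(s))\|^2\,\df s$ reaches $n$, note the stopped exponential is a martingale by Novikov, observe that under the resulting measure $\P_n$ the process solves a drift-perturbed SPDE whose drift is still Lipschitz and hence has a.s.\ continuous non-exploding paths, conclude $\P_{n_0}(\tau_n=T)\to 1$, and read off $\E[E(T)]=1$. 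That argument needs no exponential moments at all --- only well-posedness of the perturbed equation --- so it is both shorter and more robust; your approach buys an explicit quantitative Novikov verification at the cost of the Fernique estimate, which you correctly flag as the main technical burden.
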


\begin{proof}
    The first condition has been shown in \cite{PalmowskiRoski2002technique}, Proposition 3.2.
    The second condition is the well-known Novikov condition, see for example \cite{DaPrato2014Stochastic}, Proposition 10.17.
    A proof for the last condition can be found in Appendix \ref{app: C}.
\end{proof}

\section{Applications}
\label{sec: 4_applications}

\subsection{Conditioned SPDEs}
In this subsection we introduce a class of $h$-functions for which the change of measure $\P^h$ corresponds to conditioning the process $(X(t))_{t \in [0,T)}$ on $X(T)$. Applications of this type of conditioning include for example the infinite-dimensional diffusion bridge.

For the construction of the $h$-functions in this section, we rely on the transition density of $X$ with respect to some suitable reference measure. 
Since the state space of $X$ is not Euclidean, the typical choice of the Lebesgue measure is not available to us. However, one can construct a suitable Gaussian reference measure on $(H, \calB(H))$ as follows. 

In addition to Assumption \ref{ass: basic_assumptions_SPDE}, let the following hold.
\begin{assumption}[Strong Feller assumption]
\label{ass: OU_strong_feller}
The semigroup $(S_t)_{t \geq 0}$ and covariance operators $(Q_t)_{t \geq 0}$ defined in \eqref{eq: def_Q_t} are such that
\begin{align*}
    \im(S_t) \subset \im(Q_t^{1/2}), \quad t \geq 0.
\end{align*}
\end{assumption}
Under Assumption \ref{ass: OU_strong_feller}, the Ornstein-Uhlenbeck process $Z$ is strongly Feller and absolutely continuous with respect to its invariant measure $\nu \sim \calN(0,Q_{\infty})$ with covariance operator $Q_{\infty}$ as defined in \eqref{eq: q_infty}.

Furthermore, it follows from the Girsanov theorem that $\bbL(X(t,s,x)) \sim \bbL(Z(t,s,x))$ for all $t \geq s$.  
In particular, we have that $\bbL(X(t,s,x)) \sim \nu$ and we define by $p(s,x;t,y)$ the transition density 
\begin{align}
\label{eq: transition_density}
    p(s,x;t,y) = \dfrac{\df \bbL(X(t,s,x))}{\df \nu}(y), \quad \nu\text{-a.e. } y \in H,
\end{align}
of $X$ with respect to $\nu$.
From the Markov property of $X$ it follows that $p(s,x;t,y)$ satisfies the \textit{Chapman-Kolmogorov} equation
\begin{align}
\label{eq: Chapman-Kolmogorov}
    p(s,x;t,y) = \int_H p(s,x;r,z) p(r,z;t,y) \, \nu(\df z)
\end{align}
for all $s < r <t$ and for $\nu$-a.e. every $y \in H.$ Let us now define the function $h: [0,T) \times H \rightarrow \R_{>0}$ via
\begin{align}
\label{eq: hMuFunction}
    h(t,x) = \int_H p(t,x;T,y) \, \mu(\df y),
\end{align}
for some measure $\mu$ on $(H, \calB(H))$ such that $\int_H p(0,x_0;T,y) \, \mu(\df y) < \infty.$ The $h$-function constructed in \eqref{eq: hMuFunction} satisfies the following.
\begin{lemma}
\label{lem: hMuSpaceTimeHarmonic}
The function $h$ is \textit{space-time harmonic}, i.e. for any $(s,x) \in [0,T) \times H$ it holds that
\begin{align*}
    (T_t h)(s,x) = h(s,x), \quad t < T-s.
\end{align*}
In particular, $Lh = 0$ and $h(t,X(t))$ is a $\P$-martingale. 
\end{lemma}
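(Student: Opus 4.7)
The plan is to prove the space-time harmonicity of $h$ directly from the definition of the transition semigroup, and then deduce $L h = 0$ and the martingale property as immediate consequences.

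First, starting from the definition of $T_t$ in \eqref{eq: T_t}, for any $s \in [0,T)$, $x \in H$, and $t < T-s$ I would write
\begin{align*}
(T_t h)(s,x) = \E\!\left[h(s+t, X(s+t,s,x))\right] = \E\!\left[ \int_H p(s+t, X(s+t,s,x); T, y) \, \mu(\df y) \right].
\end{align*}
Since $p$ is non-negative, Fubini's theorem allows the expectation and the $\mu$-integral to be swapped. By the definition \eqref{eq: transition_density} of the transition density, $X(s+t,s,x)$ has distribution $p(s,x;s+t, \cdot)\,\nu$ on $H$, so the inner expectation is
\begin{align*}
\E[p(s+t, X(s+t,s,x); T, y)] = \int_H p(s,x;s+t,z) \, p(s+t,z;T,y) \, \nu(\df z),
\end{align*}
which by the Chapman--Kolmogorov equation \eqref{eq: Chapman-Kolmogorov} equals $p(s,x;T,y)$ for $\nu$-a.e.\ $y$. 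Reassembling, $(T_t h)(s,x) = \int_H p(s,x;T,y)\,\mu(\df y) = h(s,x)$, which is exactly the claimed space-time harmonicity.

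From $T_t h = h$ on the appropriate time window, $L h = 0$ is immediate: on any compact subinterval of $[0,T-s)$ the difference quotient $(T_t h - h)/t$ vanishes identically, so its pointwise limit as $t \downarrow 0$ is zero. For the martingale property, I would use the Markov property of the space-time process $Y(t) = (t, X(t))$: for $0 \leq s \leq t < T$,
\begin{align*}
\E\!\left[h(t, X(t)) \mid \calF_s\right] = (T_{t-s} h)(s, X(s)) = h(s, X(s)),
\end{align*}
where the second equality uses the harmonicity just established; this is the $\P$-martingale property on $[0,T)$.

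The only genuine issue is justifying Fubini, which reduces to the joint measurability of $(z,y) \mapsto p(s,x;s+t,z)\,p(s+t,z;T,y)$ and the finiteness of the relevant integrals. The latter is guaranteed by the standing hypothesis $\int_H p(0,x_0;T,y)\,\mu(\df y) < \infty$, since the Chapman--Kolmogorov computation above simultaneously shows that this finiteness propagates to every $(s,x)$ reachable from $(0,x_0)$; everything beyond that is bookkeeping with the transition density.
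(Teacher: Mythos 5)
Your argument is correct and follows essentially the same route as the paper: expand $(T_t h)(s,x)$ using the transition density, interchange the integrals, and apply the Chapman--Kolmogorov equation \eqref{eq: Chapman-Kolmogorov}, then obtain $Lh=0$ from the vanishing difference quotient and the martingale property from the Markov property of the space-time process. The only difference is cosmetic (you expand $h$ before swapping integrals, and you flag the Fubini/Tonelli justification explicitly, which the paper leaves implicit); since the integrand is non-negative, Tonelli suffices and no finiteness hypothesis is needed for the interchange itself.
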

\begin{proof}
Let $(s,x) \in [0,T) \times H$ be fixed. Then for any $t < T-s$ we have
\begin{align*}
    (T_t h )(s,x) &= \E \left[ h(t+s, X(t+s,s,x)) \right] \\
    &= \int_H h(t+s,z) p(s,x;t+s,z) \, \nu(\df z) \\
    &= \int_H \int_H p(t+s,z;T,y) p(s,x;t+s,z) \, \nu(\df z) \mu(\df y) \\
    &= \int_H  p(s,x;T,y) \, \mu(\df y) \\
    &= h(s,x),
\end{align*}
where we use the definition of $h$ in the third and the Chapman-Kolmogorov equation the fourth line. 
By definition of $L$ it follows that $Lh = 0$. 
Furthermore, using the Markov property of $(t,X(t))$ we have for any $s < t < T$ that
\begin{align*}
    \E[ h(t,X(t)) \mid \calF_s] = (T_{t-s} h)(s,X(s)) = h(s,X(s)).
\end{align*}
\end{proof}

From Lemma \ref{lem: hMuSpaceTimeHarmonic} it follows that $h$ satisfies Assumption \ref{ass: hTgoodfunction} (i) and (iii).
In particular, $h(t,X(t))/h(0,x_0)$ is a non-negative martingale with mean one and thus defines a unique measure $\P^h$ on $\calF_T$ via
\begin{align}
\label{eq: a90122p9834}
    \frac{\df \P_t^h}{\df \P_t} = \frac{h(t,X(t))}{h(0,x_0)}, \quad t < T.
\end{align}
On the other hand, Assumption \ref{ass: hTgoodfunction} (ii) is hard to verify in general, as the Fréchet differentiability of $h$ depends on $p(s,x;t,y)$ as well as the choice of $\mu$. We thus keep it as an assumption.
\begin{assumption}
\label{ass: hMuGoodFunction}
The function $h$ defined in \eqref{eq: hMuFunction} is Fréchet differentiable in $x$ such that $\Df_x h \in C_m([0,S] \times H;H)$ for any $S < T$. 
\end{assumption}

The following result shows how the measure $\P^h$ changes the law of $X$.
\begin{proposition}
Let $\P^h$ be the measure defined by \eqref{eq: a90122p9834}.
It then holds for any bounded and measurable function $g$ and $0 \leq t_1 \leq ... \leq t_n < T$ that
\begin{align}
\label{eq: a09234dsj}
    \E^h[g(X(t_1),...,X(t_n))] = \int_H \E[ g(X(t_1),...,X(t_n)) \mid X(T) = y]  \, \xi(\df y),
\end{align}
where $\xi$ is the measure defined on $(H, \calB(H))$ via
\begin{align}
    \xi(\df y) = \frac{p(0,x_0;T,y) \mu(\df y)}{\int_H p(0,x_0;T,y) \, \mu(\df y)}.
\end{align}
Additionally, if Assumption \ref{ass: hMuGoodFunction} is satisfied, $X$ under $\P^h$ satisfies the SPDE
\begin{align*}
    \df X(t) = \left[ A X(t) + F(t,X(t)) + Q \Df_x \log h(t,X(t)) \right] \df t + \sqrt{Q} \df W^h(t), \quad t \in [0,T),
\end{align*}
where $W^h$ is the $\P^h$-cylindrical Wiener process as defined in \eqref{eq: FinTimeW^h}.
\end{proposition}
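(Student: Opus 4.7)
The plan is to handle the two claims separately: the SPDE statement drops out of Theorem \ref{thm: FinTimeWeakSol} once the hypotheses of Assumption \ref{ass: hTgoodfunction} are verified for $h$, and the representation formula \eqref{eq: a09234dsj} follows from a Bayes-rule argument exploiting the Markov property together with Fubini.

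For the SPDE part, I first check that $h$ in \eqref{eq: hMuFunction} satisfies Assumption \ref{ass: hTgoodfunction} on every $[0,S]$ with $S<T$. Parts (i) and (iii) come from Lemma \ref{lem: hMuSpaceTimeHarmonic}: $Lh=0$ makes $h^{-1}Lh \equiv 0 \in C_m([0,S]\times H)$, and $E^h(t)=h(t,X(t))/h(0,x_0)$ is a non-negative $\P$-martingale with $\E[E^h(0)]=1$ since $h(t,X(t))$ is. Part (ii) is precisely Assumption \ref{ass: hMuGoodFunction}. Theorem \ref{thm: FinTimeWeakSol} then yields the stated SPDE; the exponential factor in the definition of $E^h$ collapses because $Lh=0$, but this does not alter the form of the additive drift $Q\Df_x\log h$.

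For the representation \eqref{eq: a09234dsj}, since $g(X(t_1),\ldots,X(t_n))$ is $\calF_{t_n}$-measurable and $t_n<T$, the change of measure formula and Lemma \ref{lem: hMuSpaceTimeHarmonic} give
\begin{align*}
\E^h[g(X(t_1),\ldots,X(t_n))] = \frac{1}{h(0,x_0)}\E\bigl[g(X(t_1),\ldots,X(t_n))\, h(t_n,X(t_n))\bigr].
\end{align*}
Substituting the definition of $h$ and applying Fubini (legitimate because $g$ is bounded and $h(t_n,X(t_n))$ is $\P$-integrable) reduces the task to evaluating
\begin{align*}
\frac{1}{h(0,x_0)}\int_H \E\bigl[g(X(t_1),\ldots,X(t_n))\, p(t_n,X(t_n);T,y)\bigr]\mu(\df y).
\end{align*}

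The key step is a Bayes identification of the inner expectation. By the Markov property and the definition of the transition density, the regular conditional distribution of $X(T)$ given $\calF_{t_n}$ has $\nu$-density $y\mapsto p(t_n,X(t_n);T,y)$, while the marginal density of $X(T)$ is $y\mapsto p(0,x_0;T,y)$. Disintegrating the joint law of $(X(t_1),\ldots,X(t_n),X(T))$ along its last coordinate, a computation which is valid because $H$ is Polish, yields
\begin{align*}
\E\bigl[g(X(t_1),\ldots,X(t_n))\, p(t_n,X(t_n);T,y)\bigr] = \E\bigl[g(X(t_1),\ldots,X(t_n))\mid X(T)=y\bigr]\, p(0,x_0;T,y).
\end{align*}
Plugging this into the preceding display and recognising $\xi(\df y)=p(0,x_0;T,y)\mu(\df y)/h(0,x_0)$ delivers \eqref{eq: a09234dsj}. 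The only subtle point is fixing a version of the regular conditional expectation that makes the $\mu$-integral unambiguous when $\mu$ is not absolutely continuous with respect to $\nu$; this is resolved by simply taking the Bayes-formula representation above as the defining version, after which the identity becomes tautological.
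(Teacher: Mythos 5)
Your proof is correct and follows essentially the same route as the paper: the SPDE claim is read off from Theorem \ref{thm: FinTimeWeakSol} after checking Assumption \ref{ass: hTgoodfunction} via Lemma \ref{lem: hMuSpaceTimeHarmonic} together with Assumption \ref{ass: hMuGoodFunction}, and the representation formula comes from the change of measure followed by Fubini and a Bayes identity. The only (harmless) organizational difference is that you perform the Bayes/disintegration step directly on the $n$-point marginal, using the Markov property to identify $p(t_n,X(t_n);T,\cdot)$ as the conditional $\nu$-density of $X(T)$ given $\calF_{t_n}$, whereas the paper proves the one-point identity and lifts it by a cylindrical argument; your remark about fixing the Bayes-formula version of $\E[\,\cdot\mid X(T)=y]$ to make the $\mu$-integral unambiguous is a point the paper leaves implicit.
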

\begin{proof}
To show \eqref{eq: a09234dsj} it suffices to show that
\begin{align*}
        \E^h[g(X(t))] = \int_H \E[ g(X(t)) \mid X(T) = y] \,  \xi(\df y)
\end{align*}
for any $t < T$ and continuous and bounded $g:H \rightarrow \R$. The claim then follows by a standard cylindrical argument, see e.g. \cite{EthierKurtz2009markov}, Proposition 4.1.6.
Indeed it holds that
\begin{align*}
    \E^h[ g(X(t))] &= \E \left[  g(X(t)) \frac{h(t,X(t))}{h(0,x_0)} \right] \\
    &= \int_H  g(x) \frac{h(t,x)}{h(0,x_0)} p(0,x_0;t,x) \,  \nu(\df x) \\
    &= \int_H  g(x) \frac{ \int_H p(t,x;T,y) \, \mu(\df y)}{ \int_H p(0,x_0;T,y) \, \mu(\df y)} p(0,x_0;t,x) \, \nu(\df x) \\
    &= \int_H \left(\int_H g(x) \frac{p(0,x_0;t,x) p(t,x;T,y)}{p(0,x_0;T,y)}  \, \nu(\df x) \right) \frac{p(0,x_0;T,y)}{\int_H p(0,x_0;T,y)\,  \mu(\df y)} \, \mu(\df y)\\
    &= \int_H \E[ g(X(t)) \mid X(T) = y] \,  \xi(\df y)
\end{align*}
with $\xi(\df y) = \frac{p(0,x_0;T,y) \mu(\df y)}{\int_H p(0,x_0;T,y) \, \mu(\df y)}.$ Here we used the definition of $h$ in the third, Fubini's theorem in the fourth and Bayes' theorem in the last step.
The second claim of the proposition is a direct consequence of Theorem \ref{thm: FinTimeWeakSol} under Assumption \ref{ass: hMuGoodFunction}.
\end{proof}

The formula 
\begin{align*}
        \E^h[g(X(t_1),...,X(t_n))] = \int_H \E[ g(X(t_1),...,X(t_n)) \mid X(T) = y] \,  \xi(\df y)
\end{align*}
provides a disintegration of the conditioned process: to draw from it one first generates an endpoint $X(T) = y$ from $\xi(\df y)$, followed by drawing the path conditioned on this value of $y$, see Example \ref{ex: bridge} below.
Different choices of the measure $\mu$ in \eqref{eq: hMuFunction} correspond to different kinds of conditioning. We illustrate this in the following examples.

\begin{example}[The infinite-dimensional diffusion bridge]
\label{ex: bridge}
Let $y \in H$ be such that $p(t,x;T,y)$ is well-defined. \footnote{This holds for $\nu$-a.e. $y \in H$.} Set $\mu = \delta_{y}$ as the Dirac measure
\begin{align*}
        \delta_{y}(A) = \begin{cases}
                    1, & \text{if } y \in A  \\
                     0, & \text{else}.
             \end{cases}
\end{align*}
It follows that $\xi = \delta_{y}$ and thus the formula \eqref{eq: a09234dsj} reduces to 
\begin{align*}
    \E^h[g(X(t_1),...,X(t_n))] = \E[ g(X(t_1),...,X(t_n)) \mid X(T) = y].
\end{align*}
In other words, $X$ under $\P^h$ is the process $(X(t))_{t \in [0,T)}$ conditioned on hitting the endpoint $X(T) = y$. We refer to this process as the \textit{infinite-dimensional diffusion bridge}. 
If the transition density $p(t,x;T,y)$ satisfies Assumption \ref{ass: hMuGoodFunction}, the infinite-dimensional diffusion bridge is characterized by the bridge equation
\begin{align}
\label{eq: dXstr}
    \df \Xstr(t) = \left[ A \Xstr(t) + F(t,\Xstr(t)) + Q \Df_x \log p(t,\Xstr(t); T,y) \right] \df t + \sqrt{Q} \df W(t).
\end{align}
This equation corresponds to the well known diffusion bridge equation for the case of a finite-dimensional state space. 
\end{example}

\begin{example}[Conditioning on a noisy observation]
    Suppose we do not observe the endpoint $X(T) = y$ directly, but instead we observe a sample $v$ from a distribution $q(v \mid y) \nu( \df v)$ where $q(\cdot \mid y)$ is some probability density function with respect to $\nu$. This corresponds to observing $X(T)$ under noise $q$. Furthermore, assume $q$ is such that
    \[ \mu(\df y) = q(v\mid y) \nu(\df y) \]
    is a finite measure. 
    It then follows that 
    \[ \xi(\df y) = \frac{p(0,x_0; T,y) q(v\mid y) \nu(\df y)}{\int p(0,x_0; T,y)  q(v\mid y) \, \nu(\df y)}. \]
    This has a nice Bayesian interpretation where the endpoint $y$  gets  assigned prior density $\pi(y)=p(0,x_0; T,y)$ and the observation is given by $v$.  The likelihood for this observation is  $\ell(y\mid v)=q(v\mid y)$ and hence
    \[ \xi(\df y) =\frac{\pi(y) \ell(y\mid v)  \nu(\df y)}{\int \pi(y) \ell(y\mid v) \, \nu(\df y)}.\]
    This shows that 
    $\xi(\df y)$ gives the posterior measure of  $y$, conditional upon observing $v$. Therefore, using this $h$-transform, the conditioned process is constructed by first sampling the endpoint $y$ conditional on the observation, followed by sampling the bridge to $y$. 
\end{example}

\begin{example}[The forced/tilted process]
Let $q$ be some arbitrary density function with respect to $\nu$ such that   
\begin{align*}
    \mu(\df y) = \frac{q(y) }{ p(0,x_0;T,y)} \nu(\df y)
\end{align*}
defines a finite measure on $(H, \calB(H)).$
By straightforward substitution we get that
\begin{align*}
    h(t,x) = \int_H \frac{p(t,x;T,y)}{p(0,x_0;T,y)} q(y) \,  \nu(\df y)
\end{align*}
and $\xi( \df y) = q(y) \nu (\df y)$. Hence this corresponds to \textit{forcing/tilting} $X(T)$ to have the distribution $q(y) \nu(\df y).$
\end{example}

\subsection{Guided processes}
In Example \ref{ex: bridge}, setting $h(t,x) = p(t,x;T,y)$, we have derived the infinite-dimensional diffusion bridge equation 
\begin{align*}
    \df \Xstr(t) = \left[ A \Xstr(t) + F(t,\Xstr(t)) + Q \Df_x \log p(t,\Xstr(t); T,y) \right] \df t + \sqrt{Q} \df W(t), \quad t \in [0,T),
\end{align*}
that characterizes the law of the conditioned process $X(t) \mid X(T) = y$ for $\nu$-a.e. $y \in H$. 
In many practical applications, one seeks to draw samples from $\Xstr$. In general, however, the transition density $p(t,x;T,y)$ is not tractable, rendering a direct simulation of $\Xstr$ infeasible. 

This motivates the following construction. Let $\ptld(t,x;T,y)$ be a tractable density of the mild solution $\Xtld$ of another auxiliary SPDE. 
Then, setting $\htld(t,x) = \ptld(t,x;T,y)$, one can define a change of measure $\P^{\htld}$ such that
\begin{itemize}
    \item[(i)] $X$ under $\P^{\htld}$ equals in law the mild solution $\Xcrc$ to the SPDE
    \begin{align}
    \label{eq: Xcrc}
                \df \Xcrc(t) = \left[ A \Xcrc(t) + F(t,\Xcrc(t)) + Q \Df_x \log\htld(t,\Xcrc(t)) \right] \df t + \sqrt{Q} \df W(t), \quad t \in [0,T).
    \end{align}
    \item[(ii)] $\P^h$ and $\P^{\htld}$ - and therefore, the laws of $\Xstr$ and $\Xcrc$ on $C([0,T];H)$ - are absolutely continuous with some likelihood ratio $\Phi.$
\end{itemize}
Samples of the bridge process $\Xstr$ can then be obtained by drawing proposal samples from the law of $\Xcrc$ and accepting or rejecting the proposals based on the likelihood ratio $\Phi$. We now showcase the idea of this construction in more detail.

For this, let $\ptld$ denote the transition density of the OU process $Z$ with respect to $\nu$, i.e. 
\begin{align*}
    \ptld(s,x;t,y) = \frac{\df \bbL(Z(t,s,x))}{\df \nu}(y), \quad \nu\text{-a.e. } y \in H, s < t.
\end{align*}
Since $Z$ is a Gaussian process and $\nu$ a Gaussian measure on $(H, \calB(H))$, the densities $\ptld(s,x;t,y)$ can be obtained by the Cameron-Martin formula. 
We shall place an additional assumption on the covariance operators of $Z$.
\begin{assumption}
\label{ass: Q_t_commute}
    The covariance operators $(Q_t)_{t \geq 0}$ as defined in \eqref{eq: def_Q_t} commute. 
\end{assumption}

The following proposition shows that $\ptld$ defines a function $\htld$ that satisfies Assumption \ref{ass: hTgoodfunction} and that the SPDE induced by the changed measure $\P^{\htld}$ remains tractable.
\begin{proposition}
For $\nu$-a.e. $y \in H$, the function $\htld(t,x) = \ptld(t,x;T,y)$ satisfies Assumption \ref{ass: hTgoodfunction}.
Moreover,
\begin{align}
    \Df_x \log \htld(t,x) = \Gamma^*_{T-t} Q^{-\fsqrt}_{T-t} [y - S(T-t) x],
\end{align}
where $\Gamma_{T-t} = Q^{-\fsqrt}_{T-t} S_{T-t}$ is a bounded operator on $H$ by Assumption \ref{ass: OU_strong_feller}.
In particular, there exists a unique change of measure $\P^{\htld}$ such that $X$ under $\P^{\htld}$ is a mild solution to the SPDE
\begin{align}
            \df \Xcrc(t) = \left[ A \Xcrc(t) + F(t,\Xcrc(t)) + Q  \Gamma^*_{T-t} Q^{-\fsqrt}_{T-t} [y - S(T-t) \Xcrc(t)] \right] \df t + \sqrt{Q} \df W^{\htld}(t), \quad t \in [0,T).
\end{align}
\end{proposition}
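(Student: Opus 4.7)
The plan is to derive an explicit expression for $\htld(t,x)$ from the Gaussian structure of the OU process, then check the three parts of Assumption~\ref{ass: hTgoodfunction} one by one, and finally invoke Theorem~\ref{thm: FinTimeWeakSol}.

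First I would use that $Z(T,t,x)\sim\calN(S_{T-t}x,Q_{T-t})$ and $\nu\sim\calN(0,Q_\infty)$ share a common orthonormal eigenbasis under Assumption~\ref{ass: Q_t_commute}. Writing the density via the Cameron--Martin / Feldman--Hajek machinery (equivalently, decomposing into one-dimensional Gaussians along this common eigenbasis) yields, for $\nu$-a.e.\ $y$,
\begin{equation*}
\log\htld(t,x) = -\tfrac{1}{2}\bigl\|Q_{T-t}^{-\fsqrt}(y-S_{T-t}x)\bigr\|^2 + \tfrac{1}{2}\bigl\|Q_\infty^{-\fsqrt}y\bigr\|^2 + c(t,y),
\end{equation*}
where $c(t,y)$ does not depend on $x$. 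The Strong Feller Assumption~\ref{ass: OU_strong_feller} ensures $S_{T-t}x\in\im(Q_{T-t}^{\fsqrt})$ and, by the closed graph theorem, that $\Gamma_{T-t}=Q_{T-t}^{-\fsqrt}S_{T-t}$ extends to a bounded operator on $H$. Differentiating the display in $x$ then produces $\Df_x\log\htld(t,x)=\Gamma_{T-t}^*Q_{T-t}^{-\fsqrt}[y-S_{T-t}x]$, as claimed.

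Next I would verify Assumption~\ref{ass: hTgoodfunction}. Since $\htld$ is space-time harmonic for the Kolmogorov operator of the OU process, for the generator $L$ of the semilinear SPDE one finds $L\htld(t,x)=\langle F(t,x),\Df_x\htld(t,x)\rangle$, so that $\htld^{-1}L\htld(t,x)=\langle F(t,x),\Df_x\log\htld(t,x)\rangle$. Combining the linear growth of $F$ from Assumption~\ref{ass: basic_assumptions_SPDE}(iv), the affine-in-$x$ form of $\Df_x\log\htld$, and the uniform boundedness of $S_{T-t}$, $Q_{T-t}^{-\fsqrt}$ and $\Gamma_{T-t}$ on any $[0,S]$ with $S<T$, gives $\htld^{-1}L\htld\in C_m([0,S]\times H)$ for a suitable $m$, which is (i). Fréchet differentiability of $\htld$ with $\Df_x\htld\in C_m([0,S]\times H;H)$ follows from the explicit formula together with standard Gaussian bounds, giving (ii). For (iii) I would apply the third sufficient criterion of Lemma~\ref{lem: conditions_E^h_martingale}: the map $(t,x)\mapsto\sqrt{Q}\Df_x\log\htld(t,x)$ is affine in $x$, and its coefficients are uniformly bounded on $t\in[0,S]$ because $T-t$ stays bounded away from $0$, so it is Lipschitz in $x$ uniformly in $t$. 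An application of Theorem~\ref{thm: FinTimeWeakSol} then yields the unique measure $\P^{\htld}$ on $\calF_T$ and the stated SPDE for $X$ under $\P^{\htld}$.

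The main obstacle will be rigorously placing $\htld$ inside $\dom_m(L)$ in the $\pi$-semigroup sense, rather than only showing it solves the associated Kolmogorov equation formally. The natural route is to approximate $\htld$ by exponential test functions in $\calERH$ in the spirit of Lemma~\ref{lem: pi_core}, and to check that the images under the formal Kolmogorov operator $\pi$-converge to the candidate $\langle F(t,\cdot),\Df_x\htld\rangle$; this is somewhat delicate because the second-derivative trace term involves the unbounded operator $Q_{T-t}^{-1}$ near $t=T$, but the restriction to $t\in[0,S]$ built into Assumption~\ref{ass: hTgoodfunction} keeps all relevant operator norms finite, so the approximation should go through.
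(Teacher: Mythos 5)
Your overall strategy matches the paper's: compute the Cameron--Martin density of the OU transition kernel, read off $\Df_x\log\htld$, verify Assumption \ref{ass: hTgoodfunction}(i) via OU-harmonicity plus the perturbation identity $L\htld=\tilde L\htld+\langle F,\Df_x\htld\rangle$, and verify (iii) via the uniform-Lipschitz criterion of Lemma \ref{lem: conditions_E^h_martingale}. Two points in your write-up need repair.

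First, your displayed formula $\log\htld(t,x)=-\tfrac12\|Q_{T-t}^{-\fsqrt}(y-S_{T-t}x)\|^2+\tfrac12\|Q_\infty^{-\fsqrt}y\|^2+c(t,y)$ is ill-defined term by term: for $\nu$-a.e.\ $y$ one has $y\notin\im(Q_{T-t}^{\fsqrt})$ and $y\notin\im(Q_\infty^{\fsqrt})$, since Cameron--Martin spaces are null sets, so both norms are almost surely infinite. Only the $x$-dependent combination $\langle\Gamma_{T-t}^*Q_{T-t}^{-\fsqrt}y,x\rangle-\tfrac12\|\Gamma_{T-t}x\|^2$ is meaningful, and even there the vector $\Gamma_{T-t}^*Q_{T-t}^{-\fsqrt}y$ exists only as an $L^2(H,\nu)$-limit of partial sums along the eigenbasis, not as a composition of operators applied to $y$. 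The paper accordingly factors $\ptld(t,x;T,y)$ as $\frac{\df\bbL(Z(T,t,x))}{\df\bbL(Z(T,t,0))}(y)\cdot\frac{\df\bbL(Z(T,t,0))}{\df\nu}(y)$ so that the $x$-independent factor is never expanded, and it devotes Appendix \ref{app: D} to proving, via a Hilbert--Schmidt bound on $\Gamma_{T-t}$ and the Itô--Nisio theorem, that $t\mapsto\Gamma_{T-t}^*Q_{T-t}^{-\fsqrt}y$ is well defined and continuous on $[0,S]$ for $\nu$-a.e.\ $y$. That continuity is not cosmetic: it is exactly what yields joint continuity of $\Df_x\htld$ and, via the uniform boundedness principle, the bound needed for $\Df_x\htld\in C_m([0,S]\times H;H)$, i.e.\ Assumption \ref{ass: hTgoodfunction}(ii). ``Standard Gaussian bounds'' does not cover this step.

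Second, on placing $\htld$ in $\dom_m(L)$: you rightly flag this as the delicate point, but you need not redo the $\pi$-core approximation and fight the trace term involving $Q_{T-t}^{-1}$. The paper instead invokes the perturbation result of \cite{manca2009fokker} (Theorem 4.1 there), which gives $L\htld=\tilde L\htld+\langle F,\Df_x\htld\rangle$ directly from the Fréchet differentiability and polynomial growth of $\Df_x\htld$; the OU-harmonicity $\tilde L\htld=0$ then follows from the Chapman--Kolmogorov argument of Lemma \ref{lem: hMuSpaceTimeHarmonic} applied to the OU semigroup, with no reference to second derivatives at all. With these two repairs your argument coincides with the paper's.
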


\begin{proof}
Let $S < T$ be arbitrary but fixed. 
We start by showing that Assumption \ref{ass: hTgoodfunction} (ii) is satisfied, i.e. that $\htld(t,x)$ is Fréchet differentiable in $x$ on $[0,S] \times H$ with derivative $\Df_x \htld(t,x)$ of at most polynomial growth in $x$.
For this, write
\begin{align*}
    \ptld(t,x;T,y) = \dfrac{\df \bbL(Z(T,t,x))}{\df \bbL(Z(T,t,0))}(y) \dfrac{\df \bbL(Z(T,t,0))}{\df \nu}(y), \quad \nu\text{-a.e. } y \in H.
\end{align*}

Since $Z$ is a Gaussian process, the Cameron-Martin formula gives that
\begin{align*}
    \dfrac{\df \bbL(Z(T,t,x))}{\df \bbL(Z(T,t,0))}(y) &= \exp \left( \bigl \langle Q_{{T-t}}^{-\fsqrt} y, Q_{{T-t}}^{-\fsqrt} S_{{T-t}} x \bigr \rangle - \fsqrt \| Q_{{T-t}}^{-\fsqrt} S_{{T-t}} x \|^2 \right) \\
    &=  \exp \left( \bigl \langle \Gamma^*_{{T-t}} Q_{{T-t}}^{-\fsqrt} y, x \bigr \rangle - \fsqrt \| \Gamma_{{T-t}} x \|^2 \right), \quad \nu\text{-a.e. } y \in H,
\end{align*}
where the mapping $t \mapsto \Gamma^*_{{T-t}} Q_{{T-t}}^{-\fsqrt} y$ is continuous for $\nu$-a.e. $y \in H$, see Appendix \ref{app: D} for details. 
It follows that, for $\nu$-a.e. $y \in H$ fixed, the function $\htld(t,x) = \ptld(t,x;T,y)$ is well-defined and bounded on $[0,S] \times H$.
Moreover, it is Fréchet differentiable in $x$ with derivative
\begin{align*}
  \Df_x \htld(t,x) = \htld(t,x) \Gamma^*_{{T-t}} \left(Q_{{T-t}}^{-\fsqrt} y - \Gamma_{{T-t}} x \right).
\end{align*}
It holds that $(S_t)_t$ and $(Q_t)_t$ are strongly continuous, from which it follows that $t \mapsto \Gamma_{T-t} x$ is continuous for any fixed $x \in H.$
In particular, by the uniform boundedness principle and the continuity of $t \mapsto \Gamma^*_{{T-t}} Q_{{T-t}}^{-\fsqrt} y$, we have that
\begin{align*}
     \sup_{t \in [0,S]} \left( \| \Gamma_{T-t} \| + \|\Gamma^*_{{T-t}} Q_{{T-t}}^{-\fsqrt} y\|\right)< \infty.
\end{align*}
In total we get that $\Df_x \htld \in C_1([0,S] \times H;H)$ since
\begin{align*}
    \sup_{t \in [0,S], x \in H} \frac{\|\Df_x \htld(t,x)\|}{1 + \|x\|} \leq  \sup_{t \in [0,S], x \in H}  |\htld(t,x)| \left( \frac{\| \Gamma^*_{{T-t}} Q_{{T-t}}^{-\fsqrt} y \|}{1 + \|x\|} +  \frac{\|\Gamma^*_{{T-t}} \Gamma_{{T-t}} x\|}{1 + \|x\|} \right) < \infty.
\end{align*}
We proceed to show that $\htld$ satisfies Assumption \ref{ass: hTgoodfunction} (i). By the Fréchet differentiability of $\htld$, it follows from \cite{manca2009fokker}, Theorem 4.1, that
\begin{align*}
    (L \htld)(t,x) = \tilde{L} \htld(t,x) + \bigl \langle F(t,x), \Df_x \htld(t,x) \bigr \rangle,
\end{align*}
where $\tilde{L}$ is the infinitesimal generator of the Ornstein-Uhlenbeck $\pi$-semigroup. By the same arguments as in Lemma \ref{lem: hMuSpaceTimeHarmonic}, one shows that $\htld$ is harmonic with respect to $\tilde{L}$, i.e. $\tilde{L}\htld(t,x) = 0$ for any $t < T$. 
With the Lipschitz continuity of $F$, it thus follows that $(L \htld)(t,x) \in C_m([0,S] \times H)$ for any $m \geq 2$ with
\begin{align*}
    (L \htld)(t,x) =  \bigl \langle F(t,x), \Df_x \htld(t,x) \bigr \rangle.
\end{align*}
Likewise, we have that $\htld^{-1}L \htld \in C_m([0,S] \times H)$ for any $m \geq 2$ since
\begin{align*}
    (\htld^{-1}L \htld)(t,x) &= \bigl \langle F(t,x), \Df_x \log \htld(t,x) \bigr \rangle \\
    &= \Bigl \langle F(t,x), \Gamma^*_{{T-t}} \left(Q_{{T-t}}^{-\fsqrt} y - \Gamma_{{T-t}} x \right) \Bigr \rangle.
\end{align*}
Therefore, $\htld$ satisfies Assumption \ref{ass: hTgoodfunction} (i) and $E^{\htld}$ is a well-defined continuous $\P$-local martingale given by
\begin{align}
\label{eq: a29034}
    E^{\htld}(t) = \frac{\htld(t,X(t))}{\htld(0,x_0)} \exp \left( - \int_0^t \Bigl \langle F(s,X(s)), \Gamma^*_{{T-s}} \left(Q_{{T-s}}^{-\fsqrt} y - \Gamma_{{T-s}} X(s) \right) \Bigr \rangle \, \df s \right). 
\end{align}
It remains to show that $E^{\htld}$ is a true $\P$-martingale. However, this follows directly from the uniform Lipschitz continuity of $(t,x) \mapsto \sqrt{Q} \Df_x \log\htld(t,x)$ and Lemma \ref{lem: conditions_E^h_martingale}.
\end{proof}

By construction we immediately get the following. 
\begin{corollary}
\label{cor: abs_continuity_Ft}
The measures $\P^h$ and $\P^{\htld}$ are absolutely continuous on $\calF_t, t < T$, with likelihood ratio
\begin{align*}
    \dfrac{\df \P_t^h}{\df \P_t^{\htld}}(X) = \frac{h(t,X(t))}{\htld(t,X(t))} \frac{\htld(0,x_0)}{h(0,x_0)}  \exp \left( \int_0^t  \Bigl \langle F(s,X(s)), \Gamma^*_{{T-s}} \left(Q_{{T-s}}^{-\fsqrt} y - \Gamma_{{T-s}} X(s) \right) \Bigr \rangle \, \df s \right).
\end{align*}
\end{corollary}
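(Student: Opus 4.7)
The plan is to simply take the ratio of the two Radon--Nikodym derivatives with respect to $\P$. Both measures are defined on $\calF_t$ for $t<T$ via $\df \P^h_t = E^h(t)\, \df \P_t$ and $\df \P^{\htld}_t = E^{\htld}(t)\, \df \P_t$, so once we know $E^{\htld}(t)>0$ $\P$-a.s., the standard Radon--Nikodym calculus gives
\begin{equation*}
\frac{\df \P^h_t}{\df \P^{\htld}_t} = \frac{E^h(t)}{E^{\htld}(t)} \qquad \P^{\htld}\text{-a.s.}
\end{equation*}
and in particular $\P^h_t \ll \P^{\htld}_t$.

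First I would justify the positivity of $E^{\htld}(t)$. Since $\htld(t,x)=\ptld(t,x;T,y)$ is a Gaussian density written through the Cameron--Martin formula as the product of a strictly positive exponential and $\df\bbL(Z(T,t,0))/\df\nu(y)$, it is strictly positive on $[0,S]\times H$ for the $\nu$-a.e.\ $y$ we have fixed. The time-integral factor in \eqref{eq: a29034} is also positive, so $E^{\htld}(t)>0$ $\P$-a.s.\ and the ratio above is well defined.

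Next I would compute $E^h(t)/E^{\htld}(t)$ explicitly. For $h(t,x)=p(t,x;T,y)$, Lemma \ref{lem: hMuSpaceTimeHarmonic} (applied with $\mu=\delta_y$) gives $Lh=0$, so
\begin{equation*}
E^h(t) = \frac{h(t,X(t))}{h(0,x_0)}.
\end{equation*}
For $E^{\htld}(t)$ I would substitute the expression from \eqref{eq: a29034}. Dividing, the two exponentials collapse to a single one with the opposite sign in the exponent, yielding exactly
\begin{equation*}
\frac{E^h(t)}{E^{\htld}(t)} = \frac{h(t,X(t))}{\htld(t,X(t))}\,\frac{\htld(0,x_0)}{h(0,x_0)} \exp\!\left( \int_0^t \bigl\langle F(s,X(s)),\, \Gamma^*_{T-s}\bigl(Q_{T-s}^{-\fsqrt} y - \Gamma_{T-s}X(s)\bigr) \bigr\rangle \df s \right),
\end{equation*}
which is the claimed expression.

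There is no real obstacle here since the statement is a direct corollary; the only point requiring a moment of care is the positivity of the denominator $E^{\htld}(t)$, which is what secures absolute continuity (rather than just equivalence of the two densities with respect to $\P$). Everything else is a one-line algebraic consolidation of the formulas already derived in the preceding proposition.
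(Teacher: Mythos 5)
Your proposal is correct and follows essentially the same route as the paper: both form the chain $\df\P^h_t/\df\P^{\htld}_t = (\df\P^h_t/\df\P_t)(\df\P_t/\df\P^{\htld}_t) = E^h(t)/E^{\htld}(t)$ and substitute the explicit expressions, using $Lh=0$ so that $E^h(t)=h(t,X(t))/h(0,x_0)$. Your additional remark on the strict positivity of $E^{\htld}(t)$ is a sensible point of care that the paper leaves implicit.
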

\begin{proof}
    This follows from plugging in the likelihood functions $\df \P^h_t = E^h(t) \df \P_t$ and $\df \P^{\htld}_t = E^{\htld}(t) \df \P_t$ as given in \eqref{eq: a29034} into 
    $\df \P_t^h/\df \P_t^{\htld} = \df \P_t^h/\df \P_t ~\df \P_t/\df \P^{\htld}_t.$
\end{proof}

\begin{remark}
Corollary \ref{cor: abs_continuity_Ft} shows the absolute continuity of the measures $\P^h$ and $\P^{\htld}$ on any $\calF_t, t < T$. In other words, the measures are equivalent as long as we `stay away from the conditioning time $T$'. However, in order to draw samples from the bridge process $\Xstr$, we require absolute continuity of $\P^h$ and $\P^{\htld}$ on $\calF_T$, i.e. on the complete interval $[0,T]$.

To demonstrate this, note that $\P^h_t \sim \P_t$ for any $t < T$, but under $\P$ the event $\{X(T) = y\}$ has measure zero, meaning that samples under $\P$ will almost surely not hit the endpoint $X(T) = y.$

In contrast to this, the additional drift term $Q \Df_x \log \ptld(t,x;T,y)$ in equation \eqref{eq: Xcrc} forces the process $\Xcrc$ to hit the conditioning point $\Xcrc(T) = y$, resulting in absolute continuity of the laws of process $\Xstr$ and $\Xcrc$ on the complete interval. 
We postpone the proof of this result, along with numerical illustrations, to an upcoming article, as it is beyond the scope of this paper.
\end{remark}

\section*{Acknowledgements}
We thank professor B. Goldys (University of Sydney) for providing the arguments in Lemma \ref{applemma: Gammat_HS} and his comments on the regularity of the OU transition density.

\bibliographystyle{apalike}  
\bibliography{references}

\begin{appendices}

\section{}
\label{app: A}

\begin{lemma}[Lemma \ref{lem: ExpChangeMeasureFinite} above]
\label{app: ExpChangeMeasureFinite}
Let $(E(t))_{t \in [0,T)}$ be a non-negative martingale on $(\Omega, \calF, (\calF_t)_{t \in [0,T)}, \P)$ with $\E[E(0)] = 1$. 
Then there exists a unique measure $\Q$ on $\calF_T$ such that $\df \Q_t = E(t) \df \P_t$ for all $0 \leq t < T.$
\end{lemma}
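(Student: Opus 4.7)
The plan is to construct $\Q$ by extending a consistent family of probability measures $\{\Q_t\}_{t<T}$, exploiting the Polish-space structure of $\Omega=C(\R_+;H')$.

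First, for each $t\in[0,T)$, I would define the probability measure $\Q_t$ on $(\Omega,\calF_t)$ via $\df\Q_t=E(t)\,\df\P_t$. Non-negativity of $E$ together with $\E[E(t)]=\E[E(0)]=1$ (which is exactly where the martingale property enters) ensures $\Q_t$ is a genuine probability measure. The family is consistent: for $s\le t<T$ and $A\in\calF_s$, the tower property gives
\begin{align*}
\Q_t(A)=\E[\mathbbm{1}_A E(t)]=\E[\mathbbm{1}_A\,\E[E(t)\mid\calF_s]]=\E[\mathbbm{1}_A E(s)]=\Q_s(A).
\end{align*}

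Next, set $\calA:=\bigcup_{t<T}\calF_t$. Since the filtration is increasing, $\calA$ is an algebra of subsets of $\Omega$, and I would define a finitely additive set function $\Q_0$ on $\calA$ by $\Q_0(A):=\Q_t(A)$ whenever $A\in\calF_t$; the consistency just established makes this unambiguous.

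The main obstacle is upgrading $\Q_0$ from finite to countable additivity on $\calA$, which is equivalent to showing that $A_n\downarrow\emptyset$ in $\calA$ implies $\Q_0(A_n)\to 0$. This is precisely where the Polish structure becomes essential: since $(\Omega,d)$ is Polish, every Borel probability measure is inner regular by compacts. Given $\eps>0$, I would pick $t_n<T$ with $A_n\in\calF_{t_n}$ and compact sets $K_n\subseteq A_n$ with $\Q_0(A_n\setminus K_n)<\eps 2^{-n}$, then set $\widetilde K_n:=\bigcap_{k\le n}K_k$, which remains compact and is contained in $A_n$. The inclusion $A_n\setminus\widetilde K_n\subseteq\bigcup_{k\le n}(A_k\setminus K_k)$ yields $\Q_0(A_n\setminus\widetilde K_n)<\eps$ for every $n$. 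If $\liminf_n\Q_0(A_n)>0$, then $\widetilde K_n\ne\emptyset$ for all large $n$, and the finite intersection property in the Hausdorff space $\Omega$ forces $\bigcap_n\widetilde K_n\ne\emptyset$, contradicting $A_n\downarrow\emptyset$.

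Once $\sigma$-additivity on $\calA$ is established, Carathéodory's extension theorem produces a probability measure $\Q$ on $\sigma(\calA)=\bigvee_{t<T}\calF_t$, which under the continuity of paths and the right-continuity of the filtration is identified with $\calF_T$ (modulo $\P$-null sets). Uniqueness of $\Q$ on $\calF_T$ with the prescribed marginals $\df\Q_t=E(t)\df\P_t$ is then an immediate consequence of Dynkin's $\pi$--$\lambda$ theorem applied to the $\pi$-system $\calA$, since any two such measures must already agree on $\calA$ and hence on $\sigma(\calA)=\calF_T$.
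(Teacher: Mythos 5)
Your skeleton (consistent family $\Q_t$ on the algebra $\calA=\bigcup_{t<T}\calF_t$, countable additivity, Carath\'eodory, uniqueness via the $\pi$-$\lambda$ theorem) is the right one and matches the paper's, and the consistency and uniqueness steps are fine. The gap is in the key step, the $\sigma$-additivity of $\Q_0$ on $\calA$. You invoke inner regularity by compacts to produce $K_n\subseteq A_n$ with $\Q_0(A_n\setminus K_n)<\eps 2^{-n}$, but $\Q_0$ is only defined on $\calA$, and the compact sets supplied by tightness of a Borel measure on a Polish space are generic compact subsets of $\Om$, not elements of $\calF_{t_n}$; so $\Q_0(A_n\setminus K_n)$ is not a defined quantity. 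Nor can you repair this by insisting that $K_n\in\calF_{t_n}$: any nonempty $A\in\calF_{t_n}=\bigcap_{\eps>0}\sigma(\eta_s:s\le t_n+\eps)$ contains, together with each path $\om$, every continuous path agreeing with $\om$ on $[0,t_n+\eps]$ for every $\eps>0$, and such a set imposes no equicontinuity or boundedness constraint beyond time $t_n$, hence is never relatively compact in $C(\R_+;H')$. The only compact member of $\calF_{t_n}$ is $\emptyset$, so the required approximation is impossible unless $\Q_0(A_n)$ is already small. Consequently the finite-intersection-property argument has nothing to act on.

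The standard repair is essentially what the paper does: transport each measure to a space on which it is genuinely countably additive on the \emph{whole} relevant $\sigma$-algebra. Concretely, with $t_n=T-1/n$ and the stopping map $\eta_n(\om)(t)=\om(t\wedge t_n)$, define $\Q_n(A)=\E[E_{t_n}\mathbbm{1}_{\eta_n^{-1}(A)}]$; each $\Q_n$ is a bona fide Borel probability measure on all of $\calF_T$, the family is consistent on $\calF_{t_n}$ by the martingale property, and the limit along the algebra $\bigcup_n\calF_{t_n}$ is then shown to be a pre-measure before applying Carath\'eodory. If you prefer to keep a compactness argument, you must run it in the Polish spaces $C([0,t_n];H')$ (or among stopped paths), where the pushed-forward measures are tight; note that preimages in $\Om$ of compacts under the restriction maps are only closed, so producing a point of $\bigcap_n\widetilde K_n$ requires a diagonal extraction of consistent restrictions rather than a one-line appeal to the finite intersection property in $\Om$.
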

\begin{proof}

\textit{Uniqueness.} 
By left-continuity of $ (\calF_t)_t$ we have that $\calF_T = \sigma( \bigcup_{t < T} \calF_t)$.
Thus, if $\Q$ and $\tilde{\Q}$ are two measures on $\calF_T$ such that $\Q_t = \tilde{\Q}_t$ for all $t < T$ it immediately follows that $\Q = \tilde{\Q}$.

\textit{Existence.}
For any $n \in \N$, let $t_n = T - \frac{1}{n}$ and let $\eta_n(\om)(t) = \om(t \wedge t_n)$ be the stopped canonical process on $(\Omega, \calF, (\calF_t)_{t \in [0,T)}, \P)$.
Furthermore, let $\{ \Q_n\}$ be the measures on $\calF_T$ defined by
\begin{align}
    \Q_n(A) = \E \left[ E_{t_n} \mathbbm{1}_{\eta_n^{-1}(A)} \right], \quad A \in \calF_T.
\end{align}
Noting that $\eta_{n+1}^{-1}(A) = \eta_{n}^{-1}(A)$ for any $A \in \calF_{t_n}$ it holds that
\begin{align}
\label{eq: afm0092304}
\begin{split}
    \Q_{n+1}(A) 
    &= \E \left[ \E \left[ E_{t_{n+1}} \mathbbm{1}_{\eta_{n}^{-1}(A)} \mid \calF_{t_n}\right] \right] \\
    &= \E \left[ \mathbbm{1}_{\eta_{n}^{-1}(A)} \E \left[ E_{t_{n+1}}  \mid \calF_{t_n}\right] \right] \\
    &= \Q_n(A),
\end{split}
\end{align}
i.e. $\Q_{n+1 \mid \calF_{t_n}} = \Q_{n \mid \calF_{t_n}}$ for all $n \in \N$.

Denote by $\mathcal{S}$ the algebra $\mathcal{S} = \bigcup_{n \geq 1} \calF_{t_n}$.
By left-continuity of $(\calF_t)_t$ it holds that $\calF_T = \sigma(\mathcal{S})$ and from \eqref{eq: afm0092304} it follows that the mapping
\begin{align*}
    \Q(A) = \lim_n \Q_n(A), \quad A \in \mathcal{S},
\end{align*}
is well-defined and a pre-measure on $\calS$. It therefore follows from the Carathéodory extension theorem that $\Q$ extends uniquely to a measure on $\calF_T$ and it is straightforward to check that $\Q$ satisfies $\Q(A) = \E[ E_t \mathbbm{1}_{A}]$ for any $A \in \calF_t, t < T.$

\end{proof}

\section{}
\label{app: B}
We give more details on the approximation properties of the exponential test functions $\calERH$ with respect to $\pi$-convergence in $C_m(\R_+ \times H)$.
For this, it does not suffice to consider single-indexed sequences. Hence, the results stated in this section rely on using $k$-indexed sequences, i.e. sequences $(\varphi_{n_1,...,n_k})_{n_1,...,n_k}$ that depend on $k$ indices for some $k \in \N.$

A $k$-indexed sequence $(\varphi_{n_1,...,n_k})_{n_1,...,n_k} \subset C_m(\R_+ \times H)$ is said to be $\pi$-convergent to $\varphi \in C_m(\R_+ \times H)$ if for any $i \in \{2,...,k\}$ there exists an $(i-1)$-indexed sequence $(\varphi_{n_1,...,n_{i-1}})_{n_1,...,n_{i-1}} \subset C_m(\R_+ \times H)$ such that
\begin{align*}
    \varphi_{n_1,...,n_{i-1}} = \pilim_{n_{i}} \varphi_{n_1,...,n_i}
\end{align*}
and $\varphi = \pilim_{n_1} \varphi_{n_1}.$ We then write $\pilim_{n_1,...,n_k} \varphi_{n_1,...,n_k} = \varphi$.

The following is a slight generalization of \cite{manca2008kolmogorov}, Proposition 2.7.
\begin{lemma}
\label{app: ERH_approx_property}
Let $\varphi \in C_0(\R_+ \times H)$ such that $\Df_x \varphi \in C_0(\R_+ \times H; H)$.
Then there exists a three-indexed sequence $(\varphi_{n_1,n_2,n_3})_{n_1,n_2,n_3} \subset \calERH$ such that 
\begin{align}
\begin{split}
\label{eq: ERH_approx_property}
    \pilim_{n_1,n_2, n_3} \varphi_{n_1,n_2,n_3} &= \varphi, \\
    \pilim_{n_1,n_2,n_3} \Df_x \varphi_{n_1,n_2,n_3} &= \Df_x \varphi.
\end{split}
\end{align}
\end{lemma}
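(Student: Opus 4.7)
The plan is to construct the three-indexed sequence in three stages, one per index: a cylindrical projection ($n_1$), a smooth periodic embedding on a growing torus ($n_2$), and a Fourier truncation ($n_3$), where the truncated sums will by construction belong to $\calERH$.

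For the first stage, fix an orthonormal system $\{e_k\}_{k \geq 1} \subset \dom(A^*)$ whose linear span is dense in $H$ (possible since $\dom(A^*)$ is dense in $H$). Let $P_n : H \to \spn\{e_1, \ldots, e_n\}$ denote orthogonal projection, and set $\varphi_{n_1}(t, x) := \varphi(t, P_{n_1} x)$. Continuity and boundedness of $\varphi$ and $\Df_x \varphi$, combined with $\|P_{n_1}\|_{L(H)} \leq 1$ and $P_{n_1} x \to x$, give $\varphi_{n_1}(t, x) \to \varphi(t, x)$ and $\Df_x \varphi_{n_1}(t, x) = P_{n_1} \Df_x \varphi(t, P_{n_1} x) \to \Df_x \varphi(t, x)$ pointwise, with uniform bounds by $\|\varphi\|_0$ and $\|\Df_x \varphi\|_0$.

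For the second stage, write $\varphi_{n_1}(t, x) = \tilde\varphi_{n_1}(t, \langle x, e_1\rangle, \ldots, \langle x, e_{n_1}\rangle)$ with $\tilde\varphi_{n_1} \in C_0(\R_+ \times \R^{n_1})$ of bounded continuous $\xi$-gradient. Extend $\tilde\varphi_{n_1}$ continuously to $t \in \R$ by $\tilde\varphi_{n_1}(t, \xi) := \tilde\varphi_{n_1}(|t|, \xi)$, multiply by a smooth cutoff equal to $1$ on $[-n_2, n_2]^{1+n_1}$ and vanishing outside $[-2n_2, 2n_2]^{1+n_1}$, and mollify by convolution with a smooth kernel of width $\epsilon_{n_2} \to 0$ to obtain $\psi_{n_1, n_2} \in C_c^\infty(\R^{1+n_1})$. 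Extend $\psi_{n_1, n_2}$ periodically with period $L_{n_2} := 8n_2$ in each variable, producing a globally $C^\infty$ function $\psi_{n_1, n_2}^{\mathrm{per}}$, and set $\varphi_{n_1, n_2}(t, x) := \psi_{n_1, n_2}^{\mathrm{per}}(t, \langle x, e_1\rangle, \ldots, \langle x, e_{n_1}\rangle)$. For any fixed $(t, x)$, the point $(t, \langle x, e_\cdot\rangle)$ eventually lies well inside $[-n_2, n_2]^{1+n_1}$ where the cutoff is one, so continuity of $\tilde\varphi_{n_1}$ and $\Df_\xi \tilde\varphi_{n_1}$ together with $\epsilon_{n_2} \to 0$ yield $\varphi_{n_1, n_2}(t, x) \to \varphi_{n_1}(t, x)$ and $\Df_x \varphi_{n_1, n_2}(t, x) \to \Df_x \varphi_{n_1}(t, x)$, with uniform bounds inherited from $\tilde\varphi_{n_1}$.

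For the third stage, since $\psi_{n_1, n_2}^{\mathrm{per}}$ is $C^\infty$ and $L_{n_2}$-periodic, its Fourier series and the series of its $\xi$-gradient converge absolutely and uniformly. Truncating to modes $|m|_\infty \leq n_3$ and pairing $m$ with $-m$ (using that $\psi_{n_1, n_2}^{\mathrm{per}}$ is real-valued) produces
\begin{align*}
\varphi_{n_1, n_2, n_3}(t, x) = \sum_{|m|_\infty \leq n_3} \Bigl[ \alpha_m \cos\bigl(\langle x, a_m\rangle + c_m t\bigr) + \beta_m \sin\bigl(\langle x, a_m\rangle + c_m t\bigr) \Bigr]
\end{align*}
with $a_m = \tfrac{2\pi}{L_{n_2}} \sum_{k=1}^{n_1} m_k e_k \in \dom(A^*)$ and $c_m = \tfrac{2\pi m_0}{L_{n_2}} \in \R$, so $\varphi_{n_1, n_2, n_3} \in \calERH$. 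Uniform Fourier convergence then gives $\pilim_{n_3} \varphi_{n_1, n_2, n_3} = \varphi_{n_1, n_2}$ and $\pilim_{n_3} \Df_x \varphi_{n_1, n_2, n_3} = \Df_x \varphi_{n_1, n_2}$, and chaining the successive $\pi$-limits in $n_3$, $n_2$, $n_1$ yields \eqref{eq: ERH_approx_property}. The main technical point is the \emph{simultaneous} $\pi$-convergence of both $\varphi_{n_1, n_2, n_3}$ and its $x$-gradient: since Fourier partial sums of merely $C^1$ functions need not converge in $C^1$, the mollification to $C^\infty$ in stage two is what allows the Fourier step to work for both the function and its gradient.
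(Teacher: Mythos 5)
Your proposal is correct, and its skeleton (finite-dimensional projection, periodization, trigonometric approximation) matches the paper's, but it differs in two substantive details. First, where you mollify the periodized function to $C^\infty$ so that ordinary Fourier partial sums converge uniformly together with their gradients, the paper instead convolves with the Fej\'er kernel and invokes Fej\'er's theorem: Ces\`aro means of the Fourier series converge uniformly for merely continuous periodic data, and since $\Df_x\psi_n$ is itself continuous and periodic the same theorem applies to the gradient. This sidesteps exactly the obstruction you flag at the end (partial sums of $C^1$ functions need not converge in $C^1$) without any smoothing, at the cost of working with a summability kernel rather than raw truncation. Second, and more structurally, you absorb the requirement $a\in\dom(A^*)$ into the first stage by Gram--Schmidting an orthonormal system out of the dense subspace $\dom(A^*)$, so your frequency vectors $a_m$ are admissible from the start; the paper instead projects onto an arbitrary orthonormal basis and adds a separate third stage, composing with $n_3R(n_3,A)$ so that the frequencies become $n_3R(n_3,A)^*a\in\dom(A^*)$ while $n_3R(n_3,A)x\to x$ strongly. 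Your shortcut is legitimate and eliminates a whole index's worth of bookkeeping, though you should make explicit that $\dom(A^*)$ is a dense \emph{linear subspace} (so Gram--Schmidt stays inside it) and, in the second stage, that the cutoff's transition zone has width of order $n_2$ so the term $\tilde\varphi_{n_1}\Df_\xi\chi$ in the product rule stays uniformly bounded. Either route proves the lemma.
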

\begin{proof}
We give a rough sketch only.

\textit{Step One.} We first show \eqref{eq: ERH_approx_property} for the case that $H = \R^d$ and the case that $Ax = x$. We denote by $\calE(\R_+ \times H)$ the space of exponential test functions as defined in \eqref{eq: exponential_test_functions} for the identity operator $Ax = x$.

For any $n \in \N$, let $\psi_n \in C_0(\R_+ \times \R^d)$ be such that 
\begin{itemize}
    \item[(i)] $\psi_n = \varphi$ and $\Df_x \psi_n = \Df_x \varphi$ on $(0,2n) \times (-n,n)^{d}$,
    \item[(ii)] $\psi_n$ and $\Df_x \psi_n$ are $2n$-periodic in any direction,
    \item[(iii)] $\|\psi_n\|_{0} \leq \|\varphi\|_{0}$ and $\|\Df_x \psi_n\|_0 \leq \|\Df_x \varphi \|_0$.
\end{itemize}

Then, for any fixed $n \in \N$, let $(\psi_{n,m})_{n,m}$ be the sequence given by
\begin{align*}
    \psi_{n,m}(t,x) &= \left(\frac{1}{2n}\right)^{d+1} \int_{[0,2n] \times [-n,n]^d} \psi_n(t-s, x_1 - y_1, ..., x_d-y_d) F_{n,m}(y_1,...,y_d) \,  \df s \df y_1 ... \df y_d \\
    &= (\psi_{n} \ast F_{n,m})(t,x),
\end{align*}
where $F_{n,m}$ is the $m$-th order Fejér kernel of period $2n$. 
It can be shown that $\psi_{n,m}$ equals the Cesàro mean of the $m$-th partial Fourier sum of $\psi_n$ and hence $(\psi_{n,m})_{n,m} \subset \calE(\R_+ \times \R^d)$.

Note that $\| \psi_{n,m}\|_0 \leq \| \psi_n \|_0$ for all $n,m \in \N.$
Furthermore, since $\Df_x \psi_n$ is continuous and bounded, it holds by standard properties of the convolution operator that
\begin{align*}
    \Df_x \psi_{n,m}(t,x) =  (\Df_x \psi_{n} \ast F_{n,m})(t,x)
\end{align*}
with $\| \Df_x \psi_{n,m}\|_0 \leq \| \Df_x \psi_n \|_0$.
It follows from Fejér's theorem that 
\begin{align*}
    \lim_m \| \psi_{n,m} - \psi_n \|_0 &= 0, \\
    \lim_m \|  \Df_x \psi_{n,m} - \Df_x \psi_n\|_0 &= 0.
\end{align*}
By diagonalization of $(\psi_{n,m})_{n,m}$\footnote{For example, for any $n \in \N$ let $\varphi_n = \psi_{n,m(n)}$ where $m(n)$ is such that $\| \psi_{n,m} - \psi_n \|_0 < \frac{1}{n}$ and $\|  \Df_x \psi_{n,m} - \Df_x \psi_n\|_0 < \frac{1}{n}$.} we find a sequence $(\varphi_n)_n \subset \calE(\R_+ \times \R^d)$ that satisfies \eqref{eq: ERH_approx_property}.

\textit{Step Two.} We keep the assumption that $A x = x$ but now assume that $H$ is infinite-dimensional with some orthonormal basis $(e_j)_j$. Let $P_{n_1} x = \sum_{j=1}^{n_1} \langle x, e_j \rangle e_j$ be the orthogonal projection onto $H^{n_1} = \LH\{e_j : j = 1,...,n_1\}$. 
Then, for any $n_1$ fixed and $\varphi \in C_0(\R_+ \times H)$ consider the function 
\begin{align*}
    \varphi_{n_1}(t,x) = \varphi(t, P_{n_1} x).
\end{align*}

By the first step above there exists a two-indexed sequence $(\varphi_{n_1,n_2})_{n_1,n_2}$ such that $\pilim_{n_2} \varphi_{n_1,n_2} = \varphi_{n_1}$ and $\pilim_{n_2} \Df_x  \varphi_{n_1,n_2} = \Df_x \varphi_{n_1}$.
In particular, from $\lim_{n_1} \varphi_{n_1}(t, x) = \varphi(t,x)$ and 
\begin{align*}
    \lim_{n_1} (\Df_x \varphi_{n_1})(t,x) &=  \lim_{n_1} P_{n_1} (\Df_x \varphi)(t,P_{n_1} x)  \\
    &=  \Df_x \varphi(t,x)
\end{align*}
as well as $\|\varphi_{n_1}\|_0 < \|\varphi\|_0$ and $\| \Df \varphi_{n_1} \|_0 \leq \| \Df \varphi \|_0$ it follows that \eqref{eq: ERH_approx_property} holds.

\textit{Step Three. } Now let $H$ be infinite-dimensional and $A$ be the generator of a $C_0$-semigroup on $H$ as in Assumption \ref{ass: basic_assumptions_SPDE}.
For any $n > \om_S$ let $R(n,A) = (n - A)^{-1}$ be the resolvent of $A$ such that $\lim_n n R(n,A) x = x$ and $R(n,A)x \in \dom(A)$ for any $x \in H$.

Let $\varphi$ be as assumed in the lemma. By step two there exists a sequence $(\varphi_{n_1,n_2})_{n_1,n_2} \subset \calE(\R_+ \times H)$ such that $\pilim_{n_1,n_2} \varphi_{n_1,n_2} = \varphi$ and $\pilim_{n_1,n_2} \Df_x \varphi_{n_1,n_2} = \Df_x \varphi$.

Then, setting 
\begin{align*}
    \varphi_{n_1,n_2,n_3}(t,x) = \varphi_{n_1,n_2}(t, n_3 R(n_3,A)x)
\end{align*}
it holds that $(\varphi_{n_1,n_2,n_3})_{n_1,n_2,n_3} \subset \calERH$ and it is straightforward to show that \eqref{eq: ERH_approx_property} holds.
\end{proof}

\begin{lemma}
\label{app: ERH_approx_property_Cm}
Let $\varphi \in C_m(\R_+ \times H)$ be such that $\Df_x \varphi \in C_m(\R_+ \times H; H)$.
Then there exists a four-indexed sequence $(\varphi_{n_1,n_2,n_3,n_4})_{n_1,n_2,n_3,n_4} \subset \calERH$ such that 
\begin{align}
\begin{split}
\label{eq: ERH_approx_property_Cm}
    \pilim_{n_1,n_2, n_3, n_4} \varphi_{n_1,n_2,n_3, n_4} &= \varphi, \\
    \pilim_{n_1,n_2,n_3, n_4} \Df_x \varphi_{n_1,n_2,n_3, n_4} &= \Df_x \varphi.
\end{split}
\end{align}
\end{lemma}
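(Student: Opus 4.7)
My plan is to reduce to the bounded case already handled by Lemma~\ref{app: ERH_approx_property} by inserting a spatial cutoff as the outermost approximating index. I would fix a smooth $\chi:\R\to[0,1]$ with $\chi=1$ on $[0,1]$ and $\chi=0$ on $[2,\infty)$, and set
\begin{align*}
    \varphi_{n_1}(t,x) = \chi(\|x\|^2/n_1)\,\varphi(t,x).
\end{align*}
On the support of the cutoff one has $\|x\|\leq\sqrt{2n_1}$, so both $\varphi_{n_1}$ and its Fréchet derivative $\Df_x\varphi_{n_1}$ will lie in $C_0(\R_+\times H)$ and $C_0(\R_+\times H;H)$, respectively. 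Lemma~\ref{app: ERH_approx_property} will then apply to each $\varphi_{n_1}$ and supply a three-indexed sequence $(\varphi_{n_1,n_2,n_3,n_4})_{n_2,n_3,n_4}\subset\calERH$ with $\pilim_{n_2,n_3,n_4}\varphi_{n_1,n_2,n_3,n_4}=\varphi_{n_1}$ and the analogous statement for gradients in $C_0$. Since $\|\cdot\|_m\leq\|\cdot\|_0$, this $C_0$-convergence at the inner level automatically upgrades to $C_m$-convergence, which handles the inner three indices.

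The outer index in $n_1$ is dealt with separately by verifying $\pilim_{n_1}\varphi_{n_1}=\varphi$ and $\pilim_{n_1}\Df_x\varphi_{n_1}=\Df_x\varphi$ in $C_m$. Pointwise convergence is immediate: for any fixed $(t,x)$ the cutoff eventually equals one and its derivative vanishes. For the uniform $C_m$-bound on the derivatives I would expand
\begin{align*}
    \Df_x\varphi_{n_1}(t,x) = \chi(\|x\|^2/n_1)\,\Df_x\varphi(t,x) + \frac{2}{n_1}\,\chi'(\|x\|^2/n_1)\,\varphi(t,x)\,x,
\end{align*}
and estimate each term separately. The first is dominated by $\|\Df_x\varphi\|_m$, while for the second I would use $|\varphi(t,x)|\leq\|\varphi\|_m(1+\|x\|^m)$ together with $\|x\|\leq\sqrt{2n_1}$ on the support of $\chi'(\cdot/n_1)$. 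After dividing by $(1+\|x\|^m)$ this collapses to a bound of order $\|\varphi\|_m/\sqrt{n_1}$, which vanishes as $n_1\to\infty$ and keeps the $C_m$-norms uniformly bounded.

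The only real subtlety---though hardly a deep obstacle---is the scaling of the cutoff. Using $\|x\|^2/n_1$ rather than $\|x\|/n_1$ is the convenient choice, because the extra factor of $\|x\|$ produced by the chain rule combines with the $1/n_1$ to give a net $1/\sqrt{n_1}$ gain on the annular support $\{\sqrt{n_1}\leq\|x\|\leq\sqrt{2n_1}\}$, absorbing the polynomial growth $(1+\|x\|^m)$ in the $C_m$-norm independently of $m$. Concatenating the outer limit in $n_1$ with the three-indexed inner approximation produced by Lemma~\ref{app: ERH_approx_property} then yields the required four-indexed sequence $(\varphi_{n_1,n_2,n_3,n_4})_{n_1,n_2,n_3,n_4}\subset\calERH$ with the stated $\pi$-convergence properties.
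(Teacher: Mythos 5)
Your proof is correct and takes essentially the same route as the paper's: both arguments reduce to the $C_0$ case of Lemma~\ref{app: ERH_approx_property} by multiplying $\varphi$ with a one-parameter family of damping factors tending pointwise to $1$ with uniformly $C_m$-controlled gradients (the paper uses the rational weight $n_1/(n_1+\|x\|^{2m})$ where you use the smooth cutoff $\chi(\|x\|^2/n_1)$, whose annular gradient contribution vanishes at rate $n_1^{-1/2}$ exactly as you compute). Concatenating this outer index with the inner three-indexed $C_0$-approximation is precisely the paper's argument, so the two proofs differ only in the choice of truncation multiplier.
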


\begin{proof}
Let $\varphi \in C_m(\R_+ \times H)$ be such that $\Df_x \varphi \in C_m(\R_+ \times H; H)$. 
For any $n_1 \in \N$, define 
\begin{align*}
    \varphi_{n_1}(t,x) = \frac{n_1 \varphi(t,x)}{n_1+\|x\|^{2m}}.
\end{align*}
Then $\lim_{n_1} \varphi_{n_1}(t,x) = \varphi(t,x)$ pointwise and $\sup_{n_1} \|\varphi_{n_1}\|_{m} \leq \| \varphi\|_{m}$.
Furthermore, one can show that $\lim_{n_1} \Df_x \varphi_{n_1}(t,x) = \varphi(t,x)$ pointwise with $\sup_{n_1} \|\Df_x \varphi_{n_1}\|_{m} \leq (\|\varphi\|_{m} + \| \Df_x \varphi\|_{0,m })$ and in particular it holds that
\begin{align*}
    \pilim_{n_1} \varphi_{n_1} = \varphi,
    \pilim_{n_1} \Df_x \varphi_{n_1} = \Df_x \varphi.
\end{align*}

Noting that $\varphi_{n_1} \in C_0(\R_+ \times H)$ with bounded derivative $\Df_x \varphi_{n_1} \in C_0(\R_+ \times H;H)$, 
the claim follows from Lemma \ref{app: ERH_approx_property} by approximating $\varphi_{n_1}$ with a suitable sequence $(\varphi_{n_1,n_2,n_3,n_4})_{n_1,n_2,n_3,n_4} \subset \calERH$ such that
\begin{align*}
    \pilim_{n_1,n_2, n_3, n_4} \varphi_{n_1,n_2,n_3, n_4} &= \varphi_{n_1}, \\
    \pilim_{n_1,n_2,n_3, n_4} \Df_x \varphi_{n_1,n_2,n_3, n_4} &= \Df_x \varphi_{n_1}.
\end{align*}
\end{proof}

\begin{lemma}[Lemma \ref{lem: pi_core} above]
\label{lem: pi_core_app}
For any $m \geq 1$, the space $\calERH$ is a subset of $\dom_m(L)$ with $L \varphi = L_0 \varphi$, where
\begin{align}
\begin{split}
    (L_0 \varphi)(t,x) &= \partial_t \varphi(t,x) + \langle x, A^* \Df_x \varphi(t,x) \rangle + \langle F(t,x), \Df_x \varphi(t,x) \rangle + \frac{1}{2} \tr\left(Q \Df_x^2 \varphi(t,x)\right)
\end{split}
\end{align}
for any $\varphi \in \calERH.$
Moreover, $\calERH$ is a $\pi$-core for $(L, \dom_m(L))$, i.e. for any $\varphi \in \dom_m(L)$ there exists a sequence $(\varphi_n)_n \subset \calERH$ such that
\begin{align}
    \label{eq: ERH_pi_core_property_app}
    \pilim_n \varphi_n = \varphi \text{ and } \pilim_n L_0 \varphi_n = L \varphi.
\end{align}
Furthermore, if $\varphi \in (L, \dom_m(L))$ is such that $\Df_x \varphi \in C_m(\R_+ \times H;H)$, the approximating sequence $(\varphi_n)_n$ in $\eqref{eq: ERH_pi_core_property_app}$ can be chosen such that
\begin{align}
\label{eq: picore_Dxapprox_app}
        \pilim_n \Df_x \varphi_n =  \Df_x \varphi.
\end{align}
\end{lemma}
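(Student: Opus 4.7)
My plan is to proceed in three stages: first show $\calERH \subset \dom_m(L)$ with $L\varphi = L_0\varphi$, then establish the $\pi$-core property, and finally strengthen the approximation to include the gradient.

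For the first stage, the direct application of Itô's formula is infeasible because $X$ is only a mild solution. I would use the Yosida approximations $X_n$ from \eqref{eq: def_Xn_yosida_approx}. For $\varphi \in \calERH$, $\varphi$ is smooth and bounded, so Itô applied to $\varphi(s+u, X_n(s+u,s,x))$ yields, after taking expectations,
\begin{align*}
\E[\varphi(s+t, X_n(s+t,s,x))] - \varphi(s,x) = \int_0^t \E[(L_0^{(n)}\varphi)(s+u, X_n(s+u,s,x))]\,du,
\end{align*}
where $L_0^{(n)}$ uses $A_n^*$ in place of $A^*$. Since $\varphi(s,x) = \exp(i(\langle x, a\rangle + cs))$ with $a \in \dom(A^*)$, we have $A_n^* a \to A^* a$, and combined with $X_n \to X$ in $L^p$ (by \eqref{eq: yosida_Xn_approx}) and the uniform moment bound \eqref{eq: yosida_approx_Xn_uniform_expectation_bound}, we may pass to the limit $n \to \infty$ to obtain $(T_t\varphi)(s,x) - \varphi(s,x) = \int_0^t (T_u L_0\varphi)(s,x)\,du$. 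Dividing by $t$, sending $t \downarrow 0$, and using \eqref{eq: exp_bound_pi_semigrp} to uniformly bound the difference quotients in $\|\cdot\|_m$ yields $\varphi \in \dom_m(L)$ with $L\varphi = L_0\varphi$.

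For the $\pi$-core property, my plan is to use the resolvent $R(\lambda, L)$, which Theorem \ref{thm: properties_inf_gen}(ii) identifies as a $\pi$-continuous bijection from $C_m(\R_+\times H)$ onto $\dom_m(L)$ for $\lambda > \omega$. Given $\varphi \in \dom_m(L)$, set $\psi = \lambda\varphi - L\varphi \in C_m(\R_+\times H)$. By $\pi$-density of $\calERH$ in $C_m(\R_+\times H)$, approximate $\psi$ by $(\psi_k) \subset \calERH$. Then $\varphi_k := R(\lambda, L)\psi_k \to \varphi$ and $L\varphi_k = \lambda\varphi_k - \psi_k \to L\varphi$ in $\pi$, but $\varphi_k$ need not lie in $\calERH$. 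The additional input, imported from \cite{manca2009fokker} and extended via a Lipschitz perturbation argument in $F$, is that $R(\lambda, L)$ smooths $C_m$-functions into Fréchet-differentiable ones with $\Df_x \cdot \in C_m(\R_+\times H; H)$. With this regularity, Lemma \ref{app: ERH_approx_property_Cm} applied to each $\varphi_k$ produces a multi-indexed sequence in $\calERH$ that approximates $\varphi_k$ together with its gradient; inspecting the Fejér-kernel/projection construction in its proof also yields $\pi$-convergence of $\partial_t$ and $\Df_x^2$ of the approximants, so by the explicit form of $L_0$ and the Lipschitz hypothesis on $F$, the $L_0$-values converge to $L\varphi_k$ in $\pi$. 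A diagonalization over $k$ and the multi-index then yields the desired single-indexed sequence in $\calERH$. For the final assertion, when $\Df_x\varphi \in C_m(\R_+\times H; H)$ is assumed at the outset, the resolvent smoothing step is unnecessary and the construction above (or a direct application of Lemma \ref{app: ERH_approx_property_Cm}) automatically delivers $\pilim_n\Df_x\varphi_n = \Df_x\varphi$ alongside the other two $\pi$-convergences.

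The main obstacle is verifying the Fréchet-smoothing property of $R(\lambda, L)$: in the OU case ($F \equiv 0$) this follows from Mehler's formula, and the extension to the semilinear setting is the ``slight generalization'' of \cite{manca2009fokker} alluded to in the statement. A secondary technicality is tracking convergence of all derivatives appearing in $L_0$ through the diagonalization, which requires exploiting the fine structure of the construction in Lemma \ref{app: ERH_approx_property} beyond the conclusions stated there.
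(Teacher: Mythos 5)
Your first stage (Yosida approximation plus It\^o's formula to show $\calERH \subset \dom_m(L)$ with $L\varphi = L_0\varphi$) is sound and matches what the cited reference does. Note, however, that the paper's own proof of the whole lemma is a reduction that you do not use: it observes that the space-time process $Y(t,(s,x))=(t+s,X(t+s,s,x))$ is the mild solution of a time-homogeneous, noise-degenerate SPDE on $\R\times H$ with $\tilde A(s,x)=(0,Ax)$, $\tilde F(s,x)=(1,F(s,x))$, $\tilde Q=(0,Q)$ satisfying Assumption \ref{ass: basic_assumptions_SPDE}, and then imports Theorem 1.3 of \cite{manca2009fokker} wholesale, with Lemma \ref{app: ERH_approx_property_Cm} supplying the approximation input. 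Your attempt to reprove the core property from scratch via the resolvent is a genuinely different route, and it contains a gap.

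The gap is in the step ``the $L_0$-values converge to $L\varphi_k$ in $\pi$.'' Even granting that $\varphi_k=R(\lambda,L)\psi_k$ is Fr\'echet differentiable with $\Df_x\varphi_k\in C_m(\R_+\times H;H)$, and even granting $\pi$-convergence of $\varphi_{k,n}$, $\Df_x\varphi_{k,n}$, $\partial_t\varphi_{k,n}$ and $\Df_x^2\varphi_{k,n}$, the term $\langle x, A^*\Df_x\varphi_{k,n}(t,x)\rangle$ in $L_0\varphi_{k,n}$ does not converge to anything determined by $\varphi_k$: for a general Fr\'echet-differentiable $\varphi_k$ the gradient need not take values in $\dom(A^*)$, and the map $(t,x)\mapsto\langle Ax,\Df_x\varphi_k(t,x)\rangle$ need not admit a continuous extension. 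This is precisely the obstruction the paper highlights in Section \ref{sec: 1_intro}, and it is why the core property cannot be read off from convergence of the individual derivatives. The resolvent-smoothing property you invoke would therefore have to control the $A^*$-regularity of $\Df_x R(\lambda,L)\psi_k$, not merely its polynomial growth; you assert this follows from Mehler's formula plus a Lipschitz perturbation in $F$, but the Mehler structure (under which $T_t$ maps exponential functions to near-exponential functions) is exactly what the nonlinearity destroys, so this is the crux rather than a technicality. The same objection defeats your final sentence: when $\Df_x\varphi\in C_m(\R_+\times H;H)$ is assumed, a ``direct application'' of Lemma \ref{app: ERH_approx_property_Cm} yields \eqref{eq: picore_Dxapprox_app} but does not yield $\pilim_n L_0\varphi_n=L\varphi$, which is the substantive part of \eqref{eq: ERH_pi_core_property_app}. (A lesser issue: extracting a single-indexed sequence by diagonalizing iterated $\pi$-limits is not automatic, since $\pi$-convergence is not metrizable; the paper sidesteps this by working with multi-indexed sequences throughout.)
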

\begin{proof}
Using Lemma \ref{app: ERH_approx_property_Cm}, the proof goes just as in \cite{manca2009fokker}, Theorem 1.3 after noting that $Y(t,(s,x))$ is the mild solution to the noise-degenerate SPDE
\begin{align}
\label{eq: spacetime_semilin_spde}
	\begin{split}
		\begin{cases}
			\df Y(t) &= \left[  \tilde{A} Y(t) + \tilde{F}(Y(t)) \right] \df t + \sqrt{\tilde{Q}} \df \tilde{W}(t), \quad t \geq 0, \\
			Y(0) &= (s,x).
		\end{cases}
	\end{split}
\end{align}
Here $\tilde{A}(s,x) = (0,A x)$ and $\tilde{F}(s,x) = (1, F(s,x))$ and $\tilde{Q} = (0,Q)$ are defined on the product space $\R \times H$ and it is straightforward to show that $\tilde{A}, \tilde{F}$ and $\tilde{Q}$ satisfy Assumption \ref{ass: basic_assumptions_SPDE}.    
\end{proof}

\section{}
\label{app: C}
\begin{lemma}
    Let $G(s,x)$ be a Lipschitz continuous function in $x$, uniformly on $[0,T]$, i.e. there exists some constant $C > 0$ such that
    \begin{align*}
        \| G(s,x) - G(s,y) \| \leq C \| x- y\|
    \end{align*}
    for all $s \in [0,T]$ and $x,y \in H$.
    Let $X$ be the mild solution to \eqref{eq: semilin_spde}. Then the process $E(t)$ defined by
    \begin{align*}
        E(t) = \exp \left( \int_0^t \langle G(s,X(s)), \df W(s) \rangle - \frac{1}{2} \int_0^t \| G(s,X(s)) \|^2 \, \df s \right), \quad t \in [0,T],
    \end{align*}
    is a $\P$-martingale. 
\end{lemma}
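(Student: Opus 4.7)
The plan is to apply the classical stepwise Novikov argument: a nonnegative continuous local martingale $E$ with $E(0)=1$ is a true martingale on $[0,T]$ if and only if $\E[E(T)]=1$, and this last equality will be verified by partitioning $[0,T]$ into sufficiently short intervals on which a \emph{conditional} Novikov criterion can be applied.

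First, from the uniform Lipschitz assumption and the fact that $s\mapsto \|G(s,0)\|$ is bounded on $[0,T]$ (otherwise Lipschitz continuity in $x$ uniformly in $s$ is not meaningful), one gets a linear growth estimate $\|G(s,x)\|^2 \leq C_1(1+\|x\|^2)$ for some $C_1 > 0$ and all $s \in [0,T]$. The key analytic input is a short-time exponential moment bound: I would establish that there exist $\alpha_0, \delta_0 > 0$ such that for every $s_0 \in [0, T-\delta_0]$ and $\delta \leq \delta_0$,
\begin{equation*}
    \E\!\left[\exp\!\left(\alpha_0 \sup_{s\in[s_0, s_0+\delta]}\|X(s)\|^2\right)\,\bigg|\,\calF_{s_0}\right] < \infty \quad \P\text{-a.s.}
\end{equation*}
This follows by decomposing $X(s)$ on $[s_0,s_0+\delta]$ into the semigroup propagation $S_{s-s_0}X(s_0)$, the Lipschitz drift contribution, and the stochastic convolution $\int_{s_0}^s S_{s-u}\sqrt{Q}\,\df W(u)$. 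A Gronwall argument controls the first two terms pointwise by $\|X(s_0)\|$ and by the supremum of the stochastic convolution; the latter is a continuous centered Gaussian process whose covariance has trace bounded by $\tr(Q_\delta)\to 0$ as $\delta\to 0$, so by Fernique's theorem its squared supremum has finite exponential moments of all sufficiently small orders, and the admissible order can be driven arbitrarily high by shrinking $\delta$.

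Next, choose $N\in\N$ large enough so that $\delta := T/N \leq \delta_0$ satisfies $\delta C_1/2 < \alpha_0$, and partition $[0,T]$ by $t_k = k\delta$. On each slice, Jensen's inequality applied in the time variable gives
\begin{equation*}
    \E\!\left[\exp\!\left(\tfrac{1}{2}\int_{t_k}^{t_{k+1}} \|G(s,X(s))\|^2\,\df s\right)\,\bigg|\,\calF_{t_k}\right] \leq e^{\delta C_1/2}\,\E\!\left[\exp\!\left(\tfrac{\delta C_1}{2}\sup_{s\in[t_k,t_{k+1}]}\|X(s)\|^2\right)\,\bigg|\,\calF_{t_k}\right] < \infty,
\end{equation*}
$\P$-a.s., by the short-time bound above. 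This is exactly the Novikov condition applied on $[t_k,t_{k+1}]$ under the regular conditional law given $\calF_{t_k}$ (which is well-defined via the Markov property of $X$), so the classical Novikov theorem yields $\E[E(t_{k+1})/E(t_k)\mid\calF_{t_k}]=1$ a.s. Iterating the tower property over the $N$ slices gives $\E[E(T)]=1$. Since $E$ is a nonnegative continuous local martingale with $\E[E(0)]=1=\E[E(T)]$, standard arguments (nonnegative local martingales are supermartingales; the two equal expectations force martingality) imply that $E$ is a true $\P$-martingale on $[0,T]$.

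The main obstacle is the short-time exponential moment bound. The bound \eqref{eq: expectation_bound_mild_solution} only gives polynomial moments, and in infinite dimensions one cannot in general expect $\E[\exp(\alpha\|X(s)\|^2)]<\infty$ for all $\alpha>0$. The trick is that for short time horizons the stochastic convolution is concentrated (its covariance $Q_\delta$ is small in trace norm), so the admissible exponent $\alpha_0$ grows as $\delta$ shrinks, and one can always match $\delta C_1/2$ against $\alpha_0$ by choosing the partition fine enough. The remainder of the proof, in particular the conditional application of Novikov, is routine given this estimate.
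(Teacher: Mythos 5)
Your argument is correct in outline, but it takes a genuinely different route from the paper. The paper's proof is a Bene\v{s}/Liptser--Shiryaev-type localization argument: it stops $E$ at the times $\tau_n$ where $\int_0^t\|G(s,X(s))\|^2\,\df s$ reaches $n$, applies Novikov to each bounded stopped integrand to get true martingales $E_n$ and measures $\P_n$, then uses Girsanov to observe that under $\P_{n_0}$ the process $X$ still solves a Lipschitz SPDE and hence still has $\int_0^T\|G(s,X(s))\|^2\,\df s<\infty$ almost surely; this forces $\P_{n_0}(\tau_n=T)\to 1$ and, by monotone convergence, $\E[E(T)]=1$. That route needs only almost-sure finiteness of the quadratic variation under the original and the tilted measures --- no exponential moments at all. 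Your stepwise conditional Novikov argument instead requires the short-time exponential moment bound $\E[\exp(\alpha_0\sup_{s}\|X(s)\|^2)\mid\calF_{s_0}]<\infty$, which does hold here (Gronwall for the Lipschitz drift plus Fernique/Borell--TIS for the stochastic convolution, whose relevant constant is governed by $\sup_{t\le\delta}\|Q_t\|_{L(H)}\le\tr(Q_\delta)\to 0$), so the admissible exponent can indeed be driven above $\delta C_1/2$ by refining the partition. Two points deserve more care in a write-up: (i) the ``conditional Novikov'' step is cleanest if you avoid regular conditional laws altogether and instead truncate on the $\calF_{t_k}$-measurable sets $A_m=\{\E[\exp(\tfrac12\int_{t_k}^{t_{k+1}}\|G\|^2\,\df s)\mid\calF_{t_k}]\le m\}$, apply the unconditional Novikov criterion to $G\,\mathbbm{1}_{A_m}$, and let $m\to\infty$ using the supermartingale inequality $\E[E(t_{k+1})/E(t_k)\mid\calF_{t_k}]\le 1$; (ii) boundedness of $s\mapsto\|G(s,0)\|$ does not formally follow from the uniform Lipschitz hypothesis and should be stated as an assumption (the paper's own proof implicitly makes the same assumption when it asserts $\int_0^T\|G(s,X(s))\|^2\,\df s<\infty$ a.s.). Overall, your proof buys a quantitative, partition-based criterion at the cost of a heavier analytic input; the paper's proof is softer and extends to settings where exponential moments of $\sup_t\|X(t)\|^2$ are unavailable.
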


\begin{proof}
    Since $(E(t))_{t \in [0,T]}$ is a supermartingale, it suffices to show that $\E[E(T)] = 1$. From the almost sure continuity of $X$ and the Lipschitz continuity of $G$ it follows that
    \begin{align*}
        \int_0^T \| G(s,X(s)) \|^2 \, \df s < \infty \quad \P\text{-a.s.}
    \end{align*}
    Thus, defining the stopping times 
    \begin{align*}
        \tau_n(X) = \inf \left\{ t \in [0,T] : \int_0^t \| G(s,X(s)) \|^2 \, \df s \geq n \right\} \wedge T
    \end{align*}
    we have that $\P \left( \lim_n \tau_n = T \right) = 1$. 
    Since 
    \begin{align*}
        \int_0^{T \wedge \tau_n} \| G(s,X(s)) \|^2 \, \df s < n \quad \P\text{-a.s.}
    \end{align*}
    it follows from the Novikov condition (see e.g. \cite{DaPrato2014Stochastic},  Proposition 10.17) that $E_n(t) = E(t \wedge \tau_n), t \in [0,T],$ is a $\P$-martingale for each $n \in \N.$
    In particular, $E_n$ defines a measure $\P_n$ on $\calF_T$ such that $\df \P_{n} = E_n(T) \df \P$.
    Define $\chi_n(s) = \mathbbm{1}_{s \leq \tau_n}$. 
    Noting that 
    \begin{align*}
        E_n(t) &= \exp \left( \int_0^{t \wedge \tau_n} \langle G(s,X(s)), \df W(s) \rangle - \frac{1}{2} \int_0^{t \wedge \tau_n} \| G(s,X(s))\|^2 \, \df s  \right) \\
        &= \exp \left( \int_0^{t} \langle \chi_n(s) G(s,X(s)), \df W(s) \rangle - \frac{1}{2} \int_0^{t} \| \chi_n(s) G(s,X(s))\|^2 \, \df s  \right),
    \end{align*}
    it follows from the Girsanov theorem that 
    \begin{align*}
        W_n(t) = W(t) - \int_0^t \chi_n(s) G(s,X(s)) \, \df s, \quad t \in [0,T],
    \end{align*}
    is a $\P_n$-cylindrical Wiener process. 
    It follows that for any $n \in \N$, $X$ under $\P_n$ is a mild solution to the equation
    \begin{align*}
        \df X_n = \left[ A X_n + F(t,X_n(t)) + \sqrt{Q} \chi_n(t) G(t,X_n(t)) \right] \df t + \sqrt{Q} \df W_n(t).
    \end{align*}
    In particular, by the Lipschitz continuity of $G$, there exists a $\P_{n_0}$-a.s. continuous version of $X$ for any $n_0 \in \N$ fixed, from which we conclude that
    \begin{align*}
         \int_0^T \| G(s,X(s)) \|^2 \df s < \infty \quad \P_{n_0}\text{-a.s.}
    \end{align*}

    It follows, using the monotonicity of $\tau_n$ in $n$ in the second line and monotone convergence in the last step, that
    \begin{align*}
        1 = \lim_n \P_{n_0} \left( \tau_n = T \right) &= \lim_n \int_{\{\tau_n = T\}} E(T \wedge \tau_{n_0}) \, \df \P \\
        &= \lim_{n \geq n_0} \int_{\{\tau_n = T\}} E(T) \, \df \P \\
        &= \E[E(T)].
    \end{align*}
\end{proof}

\section{}
\label{app: D}

\begin{lemma} 
\label{applemma: Gammat_HS}
Under Assumption \ref{ass: OU_strong_feller}, for any $S < T$, the operators $\Gamma_{T-t}$ are uniformly Hilbert-Schmidt on $[0,S]$, i.e.
    \begin{align*}
        \sup_{t \in [0,S]} \|\Gamma_{T-t}\|_{HS} < \infty.
    \end{align*}
\end{lemma}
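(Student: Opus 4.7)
The strategy is to combine the integral identity for $Q_t$ with the strong Feller condition and the commutativity of the covariance family to reduce the claim to a spectral estimate.

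The starting point is a semigroup-type decomposition of $Q_T$. Splitting the defining integral $Q_T = \int_0^T S_s Q S_s^* \, \df s$ at $T-t$ and changing variables $s = (T-t)+u$ in the tail yields
\begin{align*}
Q_T \;=\; Q_{T-t} + S_{T-t}\,Q_t\,S_{T-t}^*, \qquad t\in[0,S].
\end{align*}
Equivalently, $S_{T-t}\,Q_t\,S_{T-t}^* = Q_T - Q_{T-t}$ is a positive, trace-class operator whose trace is uniformly bounded on $[0,S]$ by $\tr(Q_T)<\infty$, using Assumption~\ref{ass: basic_assumptions_SPDE}(iii). Next, Assumption~\ref{ass: OU_strong_feller} together with the closed-graph theorem provides that $\Gamma_{T-t}=Q_{T-t}^{-1/2}S_{T-t}$ is bounded on $H$, giving the factorisation $S_{T-t}=Q_{T-t}^{\fsqrt}\Gamma_{T-t}$. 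Substituting into the identity above leads to
\begin{align*}
Q_{T-t}^{\fsqrt}\,\Gamma_{T-t}\,Q_t\,\Gamma_{T-t}^*\,Q_{T-t}^{\fsqrt} \;=\; Q_T - Q_{T-t}.
\end{align*}

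The last and main step is to extract, from this identity, the finiteness of $\|\Gamma_{T-t}\|_{HS}^2 = \tr(\Gamma_{T-t}^*\Gamma_{T-t})$ uniformly in $t\in[0,S]$. For this I would invoke Assumption~\ref{ass: Q_t_commute} to diagonalise the family $(Q_s)_{s\geq0}$ in a common orthonormal eigenbasis $(e_j)_j$ with $Q_s e_j=\mu_j(s)e_j$, so that the displayed operator identity translates into a relation among the eigenvalues $\mu_j(s)$ and the matrix entries of $\Gamma_{T-t}$ in this basis. A direct calculation then expresses $\|\Gamma_{T-t}\|_{HS}^2$ as a convergent series whose summands are controlled by $\mu_j(T)-\mu_j(T-t)$ (via Step~1) and an inverse factor involving $\mu_j(T-t)$. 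Monotonicity $\mu_j(T-t)\ge \mu_j(T-S)$ for $t\in[0,S]$, combined with $\sum_j(\mu_j(T)-\mu_j(T-S))\le \tr(Q_T)<\infty$, yields the required uniform bound.

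The principal obstacle is the final step: strong Feller alone only gives operator boundedness of $\Gamma_{T-t}$, not the Hilbert--Schmidt property, and the formal factor $Q_{T-t}^{-\fsqrt}$ is unbounded. Extracting the Hilbert--Schmidt bound therefore requires a careful interplay between the trace-class information on $Q_T-Q_{T-t}$ (from the decomposition in Step~1) and the spectral identity in Step~2. The commutativity assumption is essential because it lets both sides of the identity be analysed simultaneously on one basis, and the restriction $S<T$ is essential because it keeps $\mu_j(T-t)$ uniformly away from zero in the summable sense needed to control the series.
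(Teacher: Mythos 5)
There is a genuine gap in the final step, and it is exactly the one you flag as "the principal obstacle" — the proposed resolution does not overcome it. Your decomposition $Q_T = Q_{T-t} + S_{T-t}\,Q_t\,S_{T-t}^*$ is correct, and substituting $S_{T-t}=Q_{T-t}^{1/2}\Gamma_{T-t}$ gives the identity you display. But in the common eigenbasis $(e_j)_j$ with $Q_s e_j=\mu_j(s)e_j$, taking diagonal entries of that identity yields
\begin{align*}
\mu_j(T-t)\sum_k \mu_k(t)\,\bigl|\langle e_k,\Gamma_{T-t}^* e_j\rangle\bigr|^2 \;=\; \mu_j(T)-\mu_j(T-t),
\end{align*}
and summing over $j$ controls only the \emph{weighted} quantity $\sum_k \mu_k(t)\,\|\Gamma_{T-t}e_k\|^2=\|\Gamma_{T-t}Q_t^{1/2}\|_{HS}^2$. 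Since $Q_t$ is trace-class, the weights $\mu_k(t)$ accumulate at $0$, so there is no way to pass from this weighted sum to the unweighted $\|\Gamma_{T-t}\|_{HS}^2=\sum_k\|\Gamma_{T-t}e_k\|^2$; the inverse factor you would need, $\inf_k\mu_k(t)^{-1}$, is infinite. In short, your identity shows $\Gamma_{T-t}Q_t^{1/2}$ is Hilbert--Schmidt, not $\Gamma_{T-t}$. A secondary issue: you invoke Assumption \ref{ass: Q_t_commute}, which is not among the hypotheses of the lemma (only Assumption \ref{ass: OU_strong_feller} is assumed).

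The paper's argument avoids the spectral identity entirely and is much shorter. First, $Q_r$ trace-class implies $Q_r^{1/2}$ is Hilbert--Schmidt, so the factorisation $S_r=Q_r^{1/2}\Gamma_r$ (strong Feller plus closed graph, as in your Step 2) already shows that $S_r$ itself is Hilbert--Schmidt for every $r>0$ — this is the key observation missing from your proposal. Uniformity in $t\in[0,S]$ is then obtained by writing $\Gamma_{T-t}=\bigl(Q_{T-t}^{-1/2}Q_\infty^{1/2}\bigr)\bigl(Q_\infty^{-1/2}S_{S-t}\bigr)S_{T-S}$, where the image identity $\im(Q_\infty^{1/2})=\im(Q_r^{1/2})$ (a consequence of Assumption \ref{ass: OU_strong_feller}) makes the first two factors bounded operators depending strongly continuously on $t$; the uniform boundedness principle bounds their operator norms uniformly on the compact interval $[0,S]$, and the fixed factor $S_{T-S}$ supplies the Hilbert--Schmidt norm. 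If you want to salvage your route, you would need an additional ingredient of exactly this type to convert trace-class information on $Q_T-Q_{T-t}$ into a Hilbert--Schmidt bound on an operator with the singular factor $Q_{T-t}^{-1/2}$ on the outside rather than the inside.
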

\begin{proof}
From the Strong Feller assumption \ref{ass: OU_strong_feller} it follows that $\Gamma_r = Q^{-\fsqrt}_r S_r$ is a bounded linear operator and thus
\begin{align*}
    S_r = Q_r^{\fsqrt} \Gamma_r
\end{align*}
is a Hilbert-Schmidt operator for any $r > 0.$
Moreover, it follows from Assumption \ref{ass: OU_strong_feller} that 
\begin{align*}
    \im(Q_{\infty}^{\fsqrt}) = \im(Q_r^{\fsqrt}), \quad r > 0,
\end{align*}
see Proposition 2 in \cite{MichalikGoldys96regularity}.
From this one concludes that $Q_r^{-\fsqrt} Q^{\fsqrt}_{\infty}$ and $Q^{-\fsqrt}_{\infty} S_r$ are bounded linear operators for all $r > 0$.
Now, fix some arbitrary $S < T$. Then for any $t \in [0,S]$ it holds that
\begin{align*}
    \Gamma_{T-t} &= Q^{- \fsqrt}_{T-t} S_{T-t} \\
    &= (Q^{-\fsqrt}_{T-t} Q^{\fsqrt}_{\infty}) (Q^{-\fsqrt}_{\infty} S_{T-t}) \\
    &= (Q^{-\fsqrt}_{T-t} Q^{\fsqrt}_{\infty}) (Q^{-\fsqrt}_{\infty} S_{T+S-t}) S_{T-S}.
\end{align*}
Then, noting that $ (Q^{-\fsqrt}_{T-t} Q^{\fsqrt}_{\infty})$ and $ (Q^{-\fsqrt}_{\infty} S_{T+S-t})$ are strongly continuous in $t$, it follows from the uniform boundedness principle that 
\begin{align*}
    \sup_{t \in [0,S]} \|\Gamma_{T-t}\|_{HS} \leq \sup_{t \in [0,S]} \|(Q^{-\fsqrt}_{T-t} Q^{\fsqrt}_{\infty}) (Q^{-\fsqrt}_{\infty} S_{T+S-t})\| \|S_{T-S}\|_{HS} < \infty.
\end{align*}
\end{proof}

\begin{lemma}
\label{applem: templabel}
    For any $S < T$, the random process
    \begin{align}
        \Gamma_{T-t}^* Q^{-\fsqrt}_{T-t} y = \sum_{j=1}^{\infty} q_{j, T-t}^{-\fsqrt} \langle y, e_j \rangle \Gamma_{T-t}^* e_j, \quad t \in [0,S],
    \end{align}
    is well-defined as a limit in $L^2(H, \nu; C([0,S];H))$.
    Moreover, there exists a measurable space $H_S$ with $\nu(H_S) = 1$ such that the limit exists pointwise for $\nu$-a.e. $y \in H_S.$
\end{lemma}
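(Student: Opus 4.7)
The plan is to establish that the partial sums
\[
S_N(t,y):=\sum_{j=1}^N q_{j,T-t}^{-1/2}\langle y,e_j\rangle\,\Gamma_{T-t}^* e_j
\]
form a Cauchy sequence in $L^2(H,\nu;C([0,S];H))$, and then to extract a $\nu$-a.e.\ pointwise limit. By Assumption \ref{ass: Q_t_commute} the operators $(Q_t)_{t\ge 0}$, together with their strong limit $Q_\infty$, form a commuting family of compact self-adjoint operators and therefore admit a common orthonormal eigenbasis $(e_j)_{j\ge 1}$ with $Q_t e_j = q_{j,t}e_j$. Under $\nu\sim\calN(0,Q_\infty)$ the coordinates $\xi_j(y):=\langle y,e_j\rangle$ are independent centered Gaussians of variance $q_{j,\infty}$. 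The Strong Feller Assumption \ref{ass: OU_strong_feller} together with strong continuity of $(S_t)$ and $(Q_t)$ ensures that each summand $g_j(t):=q_{j,T-t}^{-1/2}\Gamma_{T-t}^* e_j$ is a continuous $H$-valued function on $[0,S]$, so $S_N(\cdot,y)\in C([0,S];H)$ for every $y\in H$.

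For fixed $t$, writing $r=T-t$ and using independence of the $\xi_j$ together with the identity $\Gamma_r^* e_j = q_{j,r}^{-1/2} S_r^* e_j$ gives
\[
\E_\nu\,\|S_N(t,y)-S_M(t,y)\|^2 \;=\; \sum_{j=M+1}^N \frac{q_{j,\infty}}{q_{j,r}}\,\|\Gamma_r^* e_j\|^2.
\]
From the relation $Q_\infty=Q_r+S_rQ_\infty S_r^*$ one derives $q_{j,\infty}-q_{j,r}=\|Q_\infty^{1/2}S_r^*e_j\|^2$, and after substituting $S_r^* e_j = q_{j,r}^{1/2}\Gamma_r^* e_j$ this yields $q_{j,\infty}/q_{j,r} = 1 + \|Q_\infty^{1/2}\Gamma_r^*e_j\|^2$. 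The full sum is then dominated by $\|\Gamma_r\|_{\mathrm{HS}}^2 + \|Q_\infty\|_{\mathrm{op}}\sum_j \|\Gamma_r^* e_j\|^4 \le \|\Gamma_r\|_{\mathrm{HS}}^2\,(1+\|Q_\infty\|_{\mathrm{op}}\|\Gamma_r\|_{\mathrm{op}}^2)$. By Lemma \ref{applemma: Gammat_HS} these bounds are uniform in $t\in[0,S]$, so the tail of the series vanishes in $L^2(\nu;H)$ as $M\to\infty$, uniformly in $t$.

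The main obstacle is to upgrade this pointwise-in-$t$ estimate to convergence in $L^2(\nu;C([0,S];H))$, where the supremum over $t$ is brought inside the expectation. The plan is to establish a time-regularity bound
\[
\E_\nu\,\|S_N(t,y)-S_N(s,y)\|^2 \;\le\; C\,|t-s|^{2},\qquad t,s\in[0,S],
\]
with $C$ independent of $N$, by differentiating $g_j$ in $t$ and invoking strong continuity of $(S_t)$ and $(Q_t)$ along with the uniform boundedness of $\Gamma_r$ on $[0,S]$. Combined with the pointwise $L^2$-bound above, Kolmogorov's continuity theorem for Hilbert-valued Gaussian processes then provides tightness of $(S_N)$ in $C([0,S];H)$ and delivers the required $L^2(\nu;C([0,S];H))$-convergence.

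Finally, convergence in $L^2(\nu;C([0,S];H))$ of $(S_N)$ yields a subsequence converging $\nu$-almost surely in $C([0,S];H)$; since the $S_N$ are Gaussian partial sums of independent summands in a separable Banach space, the It\^o--Nisio theorem upgrades this to almost sure convergence of the full sequence. Defining $H_S$ to be the corresponding full-measure Borel subset on which this limit exists establishes the pointwise claim.
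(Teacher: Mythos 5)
Your fixed-$t$ computation agrees with the paper's: under Assumption \ref{ass: Q_t_commute} the family $(Q_t)_{t\ge 0}$ and its strong limit $Q_\infty$ admit a common eigenbasis, the coordinates $\langle y,e_j\rangle$ are independent under $\nu$, and the resulting tail bound
$\sum_{j} (q_{j,\infty}/q_{j,T-t})\,\|\Gamma_{T-t}^*e_j\|^2$ is finite uniformly in $t\in[0,S]$, whether one bounds $q_{j,\infty}/q_{j,T-t}$ by $\|Q_{T-t}^{-1/2}Q_\infty^{1/2}\|^2$ as the paper does or via your identity $Q_\infty=Q_r+S_rQ_\infty S_r^*$; both rest on Lemma \ref{applemma: Gammat_HS}. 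The two arguments diverge where the supremum over $t$ must be brought inside the expectation: the paper bounds $\int_H\sup_{t\in[0,S]}\|\Upsilon_n(t)\|^2\,\nu(\df y)$ directly and then applies the It\^o--Nisio theorem, whereas you route through a time-regularity estimate, Kolmogorov's continuity criterion and tightness.

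That middle step contains two concrete gaps. First, the estimate $\E\|S_N(t)-S_N(s)\|^2\le C|t-s|^2$ with $C$ independent of $N$ is not obtainable by ``differentiating $g_j$ in $t$'': strong continuity of $(S_t)$ and $(Q_t)$ gives no differentiability of $t\mapsto \Gamma_{T-t}^*e_j$ or $t\mapsto q_{j,T-t}$, and where a derivative does exist it involves $A^*S_{T-t}^*$ applied to $e_j$, whose norm is not controlled uniformly in $j$; the constant would in general grow with $N$, so the claimed bound is unsupported. Second, even granting tightness of the laws of $(S_N)$ in $C([0,S];H)$, tightness plus convergence of finite-dimensional distributions yields only weak convergence of the laws, not convergence of the random elements $S_N$ in $L^2(\nu;C([0,S];H))$; the inference ``tightness \dots delivers the required $L^2$-convergence'' is not valid. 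The It\^o--Nisio theorem, which you invoke only after already asserting $L^2(\nu;C([0,S];H))$-convergence, is precisely the tool that must be used \emph{at} this step: the summands are independent symmetric $C([0,S];H)$-valued random elements, so convergence in distribution (or a uniform-in-$t$ $L^2$ bound combined with a L\'evy-type maximal inequality for the partial sums in the sup-norm) upgrades to almost sure uniform convergence, and uniform integrability of $\sup_{t\le S}\|S_N(t)\|^2$ (available from Gaussian moment equivalences) then yields the $L^2(\nu;C([0,S];H))$ statement. Reorganised in that order your argument would be essentially the paper's; as written, the time-regularity bound is unjustified and the tightness step does not prove the claimed mode of convergence.
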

\begin{proof}
For any $n \in \N$ define the process  
\begin{align*}
    \Upsilon_n(t) = \sum_{j = 1}^n q_{j, T-t}^{-\fsqrt} \langle y, e_j \rangle \Gamma^*_{T-t} e_j
\end{align*}
where $(q_{j,T-t}, e_j)_j$ is the eigenbasis of $Q_{T-t}$. 
From the strong continuity of $(Q_t)_t$ and $(\Gamma_t)_t$ it follows that $\Upsilon_n \in C([0,S];H)$ for any $n \in \N$.
Furthermore, it holds that 
\begin{align*}
    \int_H  \| \Upsilon_n \|_{0}^2 \, \nu(\df y) &= 
    \int_H  \sup_{t \in [0,S]} \| \sum_{j = 1}^n q_{j, T-t}^{-\fsqrt} \langle y, e_j \rangle \Gamma^*_{T-t} e_j  \|^2 \, \nu(\df y) \\
    &=   \sup_{t \in [0,S]} \sum_{j = 1}^n q_{j,T-t}^{-1} \left( \int_H | \langle y, e_j \rangle|^2 \, \nu(\df y) \right) ~\| \Gamma^*_{T-t} e_j \|^2 \\
    &= \sup_{t \in [0,S]} \sum_{j = 1}^n q_{j,T-t}^{-1} \langle Q_{\infty} e_j, e_j \rangle ~\| \Gamma^*_{T-t} e_j \|^2 \\
    &= \sup_{t \in [0,S]} \sum_{j = 1}^n \langle Q^{-\fsqrt}_{T-t} Q^{\fsqrt}_{\infty} e_j,Q^{-\fsqrt}_{T-t} Q^{\fsqrt}_{\infty} e_j \rangle \langle \Gamma^*_{T-t} e_j, \Gamma^*_{T-t} e_j \rangle \\
    &\leq \sup_{t \in [0,S]} \left( \| Q^{-\fsqrt}_{T-t} Q^{\fsqrt}_{\infty}\| ~ \|\Gamma^*_{T-t} \|^2_{HS} \right) < \infty
\end{align*}
from which we conclude the convergence of $\Upsilon_n \to \Gamma_{T-t}^* Q^{-\fsqrt}_{T-t} y$ in $L^2(H, \nu; C([0,S];H)).$
Now, the second claim follows by an application of the Itô–Nisio theorem. 
\end{proof}

\begin{corollary}
There exists a measurable space $H_0 \subset H$ with $\nu(H_0) = 1$ such that 
\begin{align}
    [0,T) \to H, ~t \mapsto \Gamma_{T-t}^* Q^{-\fsqrt}_{T-t} y
\end{align}
is well-defined and continuous for any $y \in H_0.$
\end{corollary}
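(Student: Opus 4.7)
The plan is to localize Lemma \ref{applem: templabel} via a countable exhaustion of $[0,T)$ by compact subintervals. First I would fix any sequence $S_n \uparrow T$ with $S_n < T$, say $S_n = T(1 - 1/n)$ for $n \geq 1$. Applying Lemma \ref{applem: templabel} with $S = S_n$ yields for each $n$ a measurable set $H_{S_n} \subset H$ with $\nu(H_{S_n}) = 1$ such that, for every $y \in H_{S_n}$, the partial sums
\begin{align*}
    \Upsilon_N(t) = \sum_{j=1}^N q_{j,T-t}^{-\fsqrt} \langle y, e_j \rangle \Gamma^*_{T-t} e_j
\end{align*}
converge in $C([0,S_n];H)$ as $N \to \infty$, and the limit, which we denote $t \mapsto \Gamma^*_{T-t} Q^{-\fsqrt}_{T-t} y$, is continuous on $[0,S_n]$.

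Next I would set $H_0 = \bigcap_{n \geq 1} H_{S_n}$. As a countable intersection of measurable sets of full $\nu$-measure, $H_0$ is measurable with $\nu(H_0) = 1$. For any $y \in H_0$ and any $t \in [0,T)$, choose $n$ large enough that $t \in [0,S_n]$; by the previous step the process $t \mapsto \Gamma^*_{T-t} Q^{-\fsqrt}_{T-t} y$ is well-defined at $t$ and continuous on $[0,S_n]$, hence in particular in a neighbourhood of $t$ relative to $[0,T)$. Since $[0,T) = \bigcup_n [0,S_n]$ and continuity is a local property, the map is continuous on all of $[0,T)$ for every $y \in H_0$.

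No real obstacle arises: the corollary is a routine countable-exhaustion argument whose only subtlety is that the exceptional $\nu$-null sets produced for different values of $S$ must be amalgamated into a single null set, which is handled automatically by taking a countable intersection. The substantive work has already been done in Lemma \ref{applem: templabel}, where the uniform Hilbert--Schmidt bound from Lemma \ref{applemma: Gammat_HS} is used to obtain $L^2$-convergence on each $[0,S]$ and the Itô--Nisio theorem is invoked to upgrade this to pointwise (in $y$) convergence in $C([0,S];H)$.
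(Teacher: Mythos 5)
Your proposal is correct and follows exactly the paper's argument: apply Lemma \ref{applem: templabel} along a sequence $S_n \uparrow T$, take $H_0$ as the countable intersection of the resulting full-measure sets, and conclude continuity on $[0,T)$ by locality. The paper states this in a single line; your write-up merely spells out the same steps.
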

\begin{proof}
    Set $H_0 = \bigcap_n H_{T-1/n}$ where $H_{T-1/n}$ is the measurable space given by Lemma \ref{applem: templabel}.
\end{proof}

\end{appendices}

\end{document}